\providecommand{\U}[1]{\protect\rule{.1in}{.1in}}
\newtheorem{theorem}{Theorem}
\newtheorem{corollary}[theorem]{Corollary}
\newtheorem{definition}[theorem]{Definition}
\newtheorem{lemma}[theorem]{Lemma}
\newtheorem{proposition}[theorem]{Proposition}
\newtheorem{remark}[theorem]{Remark}
\newenvironment{proof}[1][Proof]{\noindent\textbf{#1.} }{\ \rule{0.5em}{0.5em}}
\begin{document}

\date{}
\title{Character analogues of certain Hardy-Berndt sums}
\author{M\"{u}m\"{u}n Can and Veli Kurt\\Department of Mathematics, Akdeniz University, 07058-Antalya, Turkey \\e-mails: mcan@akdeniz.edu.tr, vkurt@akdeniz.edu.tr}
\maketitle

\begin{abstract}
In this paper we consider transformation formulas for
\[
B\left(  z,s:\chi\right)  =\sum\limits_{m=1}^{\infty}\sum\limits_{n=0}%
^{\infty}\chi(m)\chi(2n+1)\left(  2n+1\right)  ^{s-1}e^{\pi im(2n+1)z/k}.
\]
We derive reciprocity theorems for the sums arising in these transformation
formulas and investigate certain properties of them. With the help of the
character analogues of the Euler--Maclaurin summation formula we establish
integral representations for the Hardy-Berndt character sums $s_{3,p}\left(
d,c:\chi\right)  $ and $s_{4,p}\left(  d,c:\chi\right)  $.

\textbf{Keywords:} Dedekind sums, Hardy-Berndt sums, Bernoulli polynomials,
Euler-Maclaurin formula.

\textbf{MSC 2010:} 11F20, 11B68, 65B15

\end{abstract}

\section{Introduction}

Berndt \cite{b6} and Goldberg \cite{g} derived transformation formulas for the
logarithms of the classical theta functions $\theta_{j}(z),$ $j=2,3,4$\ . In
these formulas six different arithmetic sums arise that are known as Berndt's
arithmetic sums or Hardy sums. Goldberg \cite{g} show that these sums also
arise in the theory of $r_{m}(n)$, the number of representations of $n$ as a
sum of $m$ integral squares and in the study of the Fourier coefficients of
the reciprocals of $\theta_{j}(z),$ $j=2,3,4$. Three of of these sums, which
we call Hardy-Berndt sums, are defined for $c>0$ by
\[
S(d,c)=\sum\limits_{n=1}^{c-1}\left(  -1\right)  ^{n+1+\left[  dn/c\right]
},\text{ }s_{3}(d,c)=\sum\limits_{n=1}^{c-1}\left(  -1\right)  ^{n}%
\overline{B}_{1}\left(  \frac{dn}{c}\right)  ,\text{ }s_{4}(d,c)=\sum
\limits_{n=1}^{c-1}\left(  -1\right)  ^{\left[  dn/c\right]  },
\]
where $\overline{B}_{p}\left(  x\right)  $ are the Bernoulli functions (see
Section 2) and $[x]$ denotes the greatest integer not exceeding $x$.

Analogous to Dedekind sums these sums also obey reciprocity formulas. For
instance, for coprime positive integers $d$ and $c$ we have \cite{b6,g}:%
\begin{align}
S(d,c)+S(c,d)  &  =1,\text{ \ \ if }d+c\text{ is odd,}\label{8}\\
2s_{3}(d,c)-s_{4}(c,d)  &  =1-\frac{d}{c},\text{ \ if }c\text{ is odd}
\label{9}%
\end{align}
and \cite{m4}%
\[
s_{4}(c,d)+s_{4}(d,c)\equiv-1+cd\text{ }\left(  \text{mod }8\right)  .
\]
We note that various properties of these sums have been investigated by many
authors, see \ \cite{a3,b6,bg,c1,g,m1,m2,m4,pt,sy,s,zw}, and several
generalizations have been studied in \cite{cck1,c2,dc,lz,m3}.

A character analogue of classical Dedekind sum, called as Dedekind character
sum, appears in the transformation formula of\ a generalized Eisenstein series
$G\left(  z,s:\chi:r_{1},r_{2}\right)  $ (see (\ref{7}) below) associated to a
non-principle primitive character $\chi$ of modulus $k$ defined by Berndt in
\cite{b2}. This sum is defined by%
\[
s\left(  d,c:\chi\right)  =\sum\limits_{n=1}^{ck}\chi\left(  n\right)
\overline{B}_{1,\chi}\left(  \frac{dn}{c}\right)  \overline{B}_{1}\left(
\frac{n}{ck}\right)
\]
and possesses the reciprocity formula%
\[
s\left(  c,d:\chi\right)  +s\left(  d,c:\overline{\chi}\right)  =B_{1,\chi
}B_{1,\overline{\chi}},
\]
whenever $d$ and $c$ are coprime positive integers, and either $c$ or
$d\equiv0\left(  \operatorname{mod}k\right)  $ (\cite{b2}). Here $\overline
{B}_{p,\chi}\left(  x\right)  $ are the generalized Bernoulli functions (see
(\ref{3}) below) and $B_{p,\chi}=\overline{B}_{p,\chi}\left(  0\right)  $. The
sum $s\left(  d,c:\chi\right)  $ is generalized by%
\[
s_{p}\left(  d,c:\chi\right)  =\sum\limits_{n=1}^{ck}\chi\left(  n\right)
\overline{B}_{p,\chi}\left(  \frac{dn}{c}\right)  \overline{B}_{1}\left(
\frac{n}{ck}\right)
\]
and corresponding reciprocity formula is established (\cite{cck}).

To the writers' knowledge generalizations of Hardy-Berndt sums, in the sense
of $s_{p}\left(  d,c:\chi\right)  ,$ have not been studied. To introduce such
generalization, originated by Berndt's paper \cite{b2} and the fact
\[
\log\theta_{4}\left(  z\right)  =-2\sum\limits_{m=1}^{\infty}\sum
\limits_{n=0}^{\infty}\frac{1}{2n+1}e^{\pi im(2n+1)z},
\]
we set the function $B\left(  z,s:\chi\right)  $\ to be
\[
B\left(  z,s:\chi\right)  =\sum\limits_{m=1}^{\infty}\sum\limits_{n=0}%
^{\infty}\chi(m)\chi(2n+1)\left(  2n+1\right)  ^{s-1}e^{\pi im(2n+1)z/k}%
\]
for $Im\left(  z\right)  >0$\ and for all $s.$\ 

The objective of this paper is to obtain transformation formulas for $B\left(
z,s;\chi\right)  $\ and investigate certain properties of the sums that arise
in these formulas. These are generalizations, containing characters and
generalized Bernoulli functions, of the sums $S(d,c),$\ $s_{3}(d,c),$%
\ $s_{4}(d,c)$\ and the sums considered in \cite{cck1}. We will show that
these sums satisfy reciprocity formulas in the sense of (\ref{8}) and
(\ref{9}). We also give integral representations for them.

A brief plan of the paper is as follows: Section 2 is the preliminary section
where we give definitions and terminology needed. In Section 3 we state and
prove main theorems concerning transformation formulas for $B\left(
z,s;\chi\right)  $. In Section 4 we give definitions of the character
analogues of Hardy-Berndt sums and prove corresponding reciprocity theorems.
We present some additional results in Section 5, in particular we derive
several interesting formulas relating these sums by employing fixed points of
a modular transformation. In the final section we apply the character
analogues of Euler--Maclaurin summation formula\ to give an alternative proof
of one of the reciprocity formula.

\section{Preliminaries}

Throughout this paper $\chi$ denotes a non-principal primitive character of
modulus $k$. The letter $p$ always denotes positive integer. We use the
modular transformation $\left(  az+b\right)  /\left(  cz+d\right)  $ where
$a,$ $b,$ $c$ and $d$ are integers with $ad-bc=1$ and $c>0$. The upper
half-plane $\left\{  x+iy:y>0\right\}  $ will be denoted by $\mathbb{H}$ and
the upper quarter-plane $\left\{  x+iy:x>-\frac{d}{c}\text{, }y>0\right\}  $
by $\mathbb{K}$. We use the notation $\left\{  x\right\}  $ for the fractional
part of $x.$ Unless otherwise stated, we assume that the branch of the
argument is defined by $-\pi\leq$ arg $z<\pi$.

The Bernoulli polynomials $B_{n}(x)$ are defined by means of the generating
function
\[
\frac{te^{xt}}{e^{t}-1}=\sum\limits_{n=0}^{\infty}B_{n}(x)\frac{t^{n}}{n!}%
\]
and $B_{n}(0)=B_{n}$ are the Bernoulli numbers with $B_{0}=1,$ $B_{1}=-1/2$
and $B_{2n-1}\left(  \frac{1}{2}\right)  =B_{2n+1}=0$ for $n\geq1.$

The Bernoulli functions $\overline{B}_{n}\left(  x\right)  $ are defined by
\begin{align*}
\overline{B}_{n}\left(  x\right)   &  =\text{ \ \ }B_{n}\left(  \left\{
x\right\}  \right)  ,\text{ \ \ \ }n>1,\\
\overline{B}_{1}(x)  &  =\left\{
\begin{array}
[c]{cl}%
0, & \hspace{-0.08in}x\text{ integer,}\\
B_{1}(\left\{  x\right\}  ), & \hspace{-0.08in}\text{otherwise}%
\end{array}
\right.
\end{align*}
and satisfy Raabe theorem for any $x$
\begin{equation}
\sum\limits_{j=0}^{r-1}\overline{B}_{n}\left(  x+\frac{j}{r}\right)
=r^{1-n}\overline{B}_{n}\left(  rx\right)  . \label{1}%
\end{equation}

$\overline{E}_{n}\left(  x\right)  $\ are the Euler functions and we are only
interested in the following property (\cite[Eq. (4.5)]{cl})%
\begin{equation}
r^{n-1}\sum\limits_{j=0}^{r-1}\left(  -1\right)  ^{j}\overline{B}_{n}\left(
\frac{x+j}{r}\right)  =-\frac{n}{2}\overline{E}_{n-1}\left(  x\right)
\label{11}%
\end{equation}
for even $r$ and any $x.$

$\overline{B}_{n,\chi}\left(  x\right)  $ are the generalized Bernoulli
functions defined by Berndt \cite{b5}.\ We will often use the following
property that can confer as a definition
\begin{equation}
\overline{B}_{n,\chi}\left(  x\right)  =k^{n-1}\sum_{j=0}^{k-1}\overline{\chi
}\left(  j\right)  \overline{B}_{n}\left(  \frac{j+x}{k}\right)  ,\ n\geq1.
\label{3}%
\end{equation}

The Gauss sum $G(z,\chi)$ is defined by\
\[
G(z,\chi)=\sum\limits_{m=0}^{k-1}\chi(m)e^{2\pi imz/k}.
\]
We put $G(1,\chi)=G(\chi)$. If $n$ is an integer, then \cite[p. 168]{a2}
\[
G(n,\chi)=\overline{\chi}(n)G(\chi).
\]

Let $r_{1}$ and $r_{2}$ be arbitrary real numbers. For $z\in\mathbb{H}$ and
$Re\left(  s\right)  >2,$ Berndt \cite{b2} defines $G\left(  z,s:\chi
:r_{1},r_{2}\right)  $ as%
\[
G\left(  z,s:\chi:r_{1},r_{2}\right)  =\sum\limits_{m,n=-\infty}^{\infty
}\ \hspace{-0.19in}^{^{\prime}}\ \frac{\chi(m)\overline{\chi}(n)}{\left(
\left(  m+r_{1}\right)  z+n+r_{2}\right)  ^{s}},
\]
where the dash means that the possible pair $m=-r_{1},$ $n=-r_{2}$ is omitted
from the summation. Extending the definition of $\chi$ to the set of all real
numbers by defining $\chi\left(  r\right)  =0$ if $r$ is not an integer, it is
shown that
\begin{align}
G\left(  z,s:\chi:r_{1},r_{2}\right)   &  =\frac{G(\overline{\chi})\left(
-2\pi i/k\right)  ^{s}}{\Gamma\left(  s\right)  }\left\{  A\left(
z,s:\chi:r_{1},r_{2}\right)  +e^{\pi is}A\left(  z,s:\chi:-r_{1}%
,-r_{2}\right)  \right\} \nonumber\\
&  \quad+\chi\left(  -r_{1}\right)  \left\{  L\left(  s,\overline{\chi}%
,r_{2}\right)  +\chi\left(  -1\right)  e^{\pi is}L\left(  s,\overline{\chi
},-r_{2}\right)  \right\}  , \label{7}%
\end{align}
where%
\[
L\left(  s,\chi,\alpha\right)  =\sum\limits_{m>-a}^{\infty}\chi(m)\left(
m+\alpha\right)  ^{s},\ \alpha\text{ real and }Re(s)>1
\]
and%
\[
A\left(  z,s:\chi:r_{1},r_{2}\right)  =\sum\limits_{m>-r_{1}}^{\infty}%
\chi(m)\sum\limits_{n=1}^{\infty}\chi(n)n^{s-1}e^{2\pi in\left(  \left(
m+r_{1}\right)  z+r_{2}\right)  /k}.
\]
For $r_{1}=r_{2}=0$ we will use the notations $G\left(  z,s:\chi\right)
=G\left(  z,s:\chi:0,0\right)  $ and $A\left(  z,s:\chi\right)  =A\left(
z,s:\chi:0,0\right)  .$ Also, for $r_{1}=r_{2}=0,$ (\ref{7}) reduces to
\[
\Gamma\left(  s\right)  G\left(  z,s:\chi\right)  =G(\overline{\chi})\left(
-\frac{2\pi i}{k}\right)  ^{s}H\left(  z,s:\chi\right)
\]
where $H\left(  z,s:\chi\right)  =\left(  1+e^{\pi is}\right)  A\left(
z,s:\chi\right)  .$

The following lemma due to Lewittes \cite[Lemma 1]{lew}.

\begin{lemma}
\label{le1} Let $A,$ $B,$ $C$ and $D$ be real with $A$ and $B$ not both zero
and $C>0$. Then for $z\in\mathbb{H},$%
\[
arg\left(  \left(  Az+B\right)  /\left(  Cz+D\right)  \right)  =arg\left(
Az+B\right)  -arg\left(  Cz+D\right)  +2\pi l,
\]
where $l$ is independent of $z$ and $l=\left\{
\begin{array}
[c]{ll}%
1, & A\leq0\text{ and }AD-BC>0,\\
0, & \text{otherwise.}%
\end{array}
\right.  $
\end{lemma}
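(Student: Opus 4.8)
I would argue by a connectedness-plus-evaluation argument. Put $w_{1}=Az+B$ and $w_{2}=Cz+D$. For $z\in\mathbb{H}$ one has $\operatorname{Im}w_{2}=C\operatorname{Im}z>0$, so $w_{2}\in\mathbb{H}$, and $w_{1}\neq0$ as well (if $A\neq0$ then $\operatorname{Im}w_{1}=A\operatorname{Im}z\neq0$; if $A=0$ then $w_{1}=B\neq0$ by hypothesis). Since $w_{1}$, $w_{2}$ and $w_{1}/w_{2}$ are nonzero, their arguments are congruent mod $2\pi$ in the usual way, so there is an integer $l(z)$ with $\arg(w_{1}/w_{2})=\arg w_{1}-\arg w_{2}+2\pi l(z)$; the remaining work is to see that $l(z)$ is constant on $\mathbb{H}$ and to compute it.

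I would first dispose of the degenerate case $AD-BC=0$. Then, since $C>0$, $(A,B)=\lambda(C,D)$ with $\lambda=A/C\neq0$ (if $\lambda=0$ then $A=0$, forcing $B=0$, a contradiction), so $w_{1}/w_{2}=\lambda$. For $\lambda>0$ both $\arg(w_{1}/w_{2})$ and $\arg w_{1}-\arg w_{2}$ vanish; for $\lambda<0$ (equivalently $A<0$) the point $\lambda w_{2}$ lies in the lower half-plane, so $\arg(\lambda w_{2})=\arg w_{2}-\pi$ and both sides equal $-\pi$. In either case $l=0$, which matches the claim because ``$AD-BC>0$'' fails here. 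So from now on $AD-BC\neq0$.

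Next I would establish constancy of $l$. A short computation gives $\operatorname{Im}(w_{1}/w_{2})=(AD-BC)\operatorname{Im}z/|w_{2}|^{2}$, which keeps the sign of $AD-BC$ throughout $\mathbb{H}$; hence $w_{1}/w_{2}$ stays in one open half-plane and avoids $(-\infty,0]$, so $z\mapsto\arg(w_{1}/w_{2})$ is continuous on $\mathbb{H}$. Likewise $\arg w_{2}$ is continuous since $w_{2}\in\mathbb{H}$, and $\arg w_{1}$ is continuous too (if $A\neq0$ then $\operatorname{Im}w_{1}=A\operatorname{Im}z$ has constant sign; if $A=0$ then $w_{1}$ is constant). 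Thus $2\pi l(z)=\arg(w_{1}/w_{2})-\arg w_{1}+\arg w_{2}$ is a continuous $\mathbb{Z}$-valued function on the connected set $\mathbb{H}$, hence a constant $2\pi l$.

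Finally I would compute $l$ by letting $z=it$ with $t\to+\infty$; because $w_{1},w_{2},w_{1}/w_{2}$ each stay in a fixed half-plane, all three arguments have limits (possibly $\pm\pi$). Here $\arg w_{2}=\arg(D+iCt)\to\pi/2$; $\arg w_{1}=\arg(B+iAt)$ tends to $\pi/2$ if $A>0$ and to $-\pi/2$ if $A<0$, while for $A=0$ it equals $0$ if $B>0$ and $-\pi$ if $B<0$. At $z=it$ one has $\operatorname{Re}(w_{1}/w_{2})=(BD+ACt^{2})/(D^{2}+C^{2}t^{2})$ and $\operatorname{Im}(w_{1}/w_{2})=(AD-BC)t/(D^{2}+C^{2}t^{2})$, so the quotient tends to $A/C$ (to $0$ if $A=0$) with imaginary part of sign $AD-BC$; thus $\arg(w_{1}/w_{2})\to0$ if $A>0$, to $\pi$ or $-\pi$ according as $AD-BC>0$ or $<0$ if $A<0$, and for $A=0$ to $-\pi/2$ if $B>0$ and $\pi/2$ if $B<0$. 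Substituting into $2\pi l=\arg(w_{1}/w_{2})-\arg w_{1}+\arg w_{2}$ gives $l=1$ exactly when $A<0$ and $AD-BC>0$, or $A=0$ and $B<0$, and $l=0$ otherwise; since $AD-BC=-BC$ when $A=0$ and $C>0$, the condition ``$A=0$, $B<0$'' is the same as ``$A=0$, $AD-BC>0$'', so the two cases merge into ``$A\le0$ and $AD-BC>0$'', as stated. I expect the only fussy point to be the branch-cut bookkeeping near the negative real axis and the enumeration of sign cases; there is no genuine obstacle here.
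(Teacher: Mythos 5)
The paper does not prove this lemma; it is quoted verbatim from Lewittes \cite[Lemma 1]{lew}, so there is no in-paper argument to compare yours against. Your proof is correct and complete for the branch $-\pi\le\arg z<\pi$ used here: the identity $\operatorname{Im}\bigl((Az+B)/(Cz+D)\bigr)=(AD-BC)\operatorname{Im}z/\lvert Cz+D\rvert^{2}$ does confine each of $w_{1}$, $w_{2}$, $w_{1}/w_{2}$ to a fixed open half-plane (or makes it constant), which gives continuity of all three arguments and hence constancy of the integer $l$ on the connected set $\mathbb{H}$; the limit evaluation along $z=it$, $t\to+\infty$, and the case bookkeeping (including the observation that for $A=0$ the condition $B<0$ is equivalent to $AD-BC=-BC>0$, and the separate treatment of $AD-BC=0$ where $w_{1}/w_{2}$ is a nonzero real constant) all check out against direct examples. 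The only point worth flagging is cosmetic: in the case $A<0$, $AD-BC>0$ the limit $\arg(w_{1}/w_{2})\to\pi$ lies outside the half-open range of the branch, but since it is only a limit of values in $(0,\pi)$ and the sum $\arg(w_{1}/w_{2})-\arg w_{1}+\arg w_{2}$ is already known to be the constant $2\pi l$, this causes no difficulty.
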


In accordance with the subject of this study we present Berndt's
transformation formulas for $r_{1}=r_{2}=0$ (see \cite[Theorem 2]{m3} for a generalization).

\begin{theorem}
\label{tk1}\cite[Theorem 2]{b2} Let $Tz=\left(  az+b\right)  /\left(
cz+d\right)  $.\ Suppose first that $a\equiv d\equiv0(mod$ $k).$ Then for
$z\in\mathbb{K}$ and all $s,$
\begin{align*}
&  \left(  cz+d\right)  ^{-s}\Gamma\left(  s\right)  G\left(  Tz,s:\chi\right)
\\
&  \quad=\overline{\chi}(b)\chi(c)\Gamma\left(  s\right)  G\left(
z,s:\overline{\chi}\right)  +\overline{\chi}(b)\chi(c)e^{-\pi is}%
\sum\limits_{j=1}^{c}\sum\limits_{\mu=0}^{k-1}\sum\limits_{\nu=0}%
^{k-1}\overline{\chi}\left(  \mu c+j\right)  \chi\left(  \left[  \tfrac{dj}%
{c}\right]  -\nu\right)  f(z,s:c,d),
\end{align*}
where
\begin{equation}
f(z,s:c,d)=\int\limits_{C}\frac{e^{-\left(  \mu c+j\right)  \left(
cz+d\right)  u/c}}{e^{-\left(  cz+d\right)  ku}-1}\frac{e^{\left(
\nu+\left\{  dj/c\right\}  \right)  u}}{e^{ku}-1}u^{s-1}du, \label{51}%
\end{equation}
where $C$ is a loop beginning at $+\infty,$ proceeding in the upper
half-plane, encircling the origin in the positive direction so that $u=0$ is
the only zero of $\left(  e^{-\left(  cz+d\right)  ku}-1\right)  \left(
e^{ku}-1\right)  $ lying \textquotedblleft inside" the loop, and then
returning to $+\infty$ in the lower half-plane. Here we choose the branch of
$u^{s}$ with $0<arg$ $u<2\pi.$

Secondly, if $b\equiv c\equiv0(mod$ $k),$ we have for $z\in\mathbb{K}$ and all
$s,$
\begin{align*}
&  \left(  cz+d\right)  ^{-s}\Gamma\left(  s\right)  G\left(  Tz,s:\chi\right)
\\
&  \quad=\overline{\chi}(a)\chi(d)\Gamma\left(  s\right)  G\left(
z,s:\chi\right)  +\overline{\chi}(a)\chi(d)e^{-\pi is}\sum\limits_{j=1}%
^{c}\sum\limits_{\mu=0}^{k-1}\sum\limits_{\nu=0}^{k-1}\chi\left(  j\right)
\overline{\chi}\left(  \left[  \tfrac{dj}{c}\right]  +d\mu-\nu\right)
f(z,s:c,d).
\end{align*}

\end{theorem}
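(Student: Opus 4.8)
The plan is to reprove Theorem~\ref{tk1} by the contour--integral method Berndt employs in \cite{b2}. I treat the first case $a\equiv d\equiv 0\pmod{k}$ in detail; the second is parallel, with the pairs $(a,d)$ and $(b,c)$ and the characters $\chi,\overline{\chi}$ interchanged. For $\operatorname{Re}(s)>2$ the defining series of $G(Tz,s:\chi)$ converges absolutely; since $mTz+n=\bigl((ma+nc)z+(mb+nd)\bigr)/(cz+d)$, I would multiply by $(cz+d)^{-s}$ and make the unimodular change of variables $(M,N)=(ma+nc,\ mb+nd)$, a bijection of $\mathbb{Z}^{2}\setminus\{0\}$ with inverse $m=dM-cN$, $n=-bM+aN$. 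The delicate point is the branch of the power: Lewittes' Lemma~\ref{le1} with $(A,B,C,D)=(M,N,c,d)$, for which $AD-BC=m$, gives $\bigl((Mz+N)/(cz+d)\bigr)^{-s}=(cz+d)^{s}(Mz+N)^{-s}e^{-2\pi ils}$, with $l=1$ exactly when $M\le 0$ and $m>0$ and $l=0$ otherwise; the hypothesis $z\in\mathbb{K}$ puts $cz+d$ in the right half--plane, which makes these branch choices the correct ones. Now use that $\chi$ has modulus $k$ and $a\equiv d\equiv 0$: the factor $\chi(dM-cN)\overline{\chi}(-bM+aN)$ collapses to $\chi(c)\overline{\chi}(b)\,\overline{\chi}(M)\chi(N)$. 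The terms with $l=0$ reassemble, by the definition of $G(z,s:\overline{\chi})$, into $\chi(c)\overline{\chi}(b)\Gamma(s)G(z,s:\overline{\chi})$, and what remains is a ``defect'' equal to $\chi(c)\overline{\chi}(b)\bigl(e^{-2\pi is}-1\bigr)$ times $\Gamma(s)\sum_{\substack{M\le 0\\ dM-cN>0}}\overline{\chi}(M)\chi(N)(Mz+N)^{-s}$.

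The heart of the matter is to evaluate this defect as a sum of the integrals $f(z,s:c,d)$. I would stratify the summation region by residues: write $M=-\bigl(j+c(kP+\mu)\bigr)$ with $1\le j\le c$, $0\le\mu\le k-1$, $P\ge 0$, and parametrize the admissible $m$ (those $\ge 1$ with $m\equiv dM\pmod{c}$) by a second pair $Q\ge 1$, $0\le\nu\le k-1$. Using $d\equiv 0\pmod{k}$ one checks that the weights reduce to $\overline{\chi}(M)=\overline{\chi}(-1)\overline{\chi}(\mu c+j)$ and $\chi(N)=\chi(-1)\chi([dj/c]-\nu)$, depending only on $j,\mu,\nu$, while a short computation rewrites the summand as
\[
Mz+N=-\Bigl[(cz+d)\bigl(kP+\mu+\tfrac{j}{c}\bigr)+kQ-\nu-\bigl\{\tfrac{dj}{c}\bigr\}\Bigr].
\]
Summing the two geometric series in $P$ and $Q$ collapses each block to a ratio of exponentials of exactly the shape inside $(\ref{51})$, and applying Hankel's formula $\int_{C}e^{-\lambda u}u^{s-1}\,du=(e^{2\pi is}-1)\Gamma(s)\lambda^{-s}$ (valid for $\operatorname{Re}(\lambda)>0$ and the branch $0<\arg u<2\pi$ fixed in the statement) term by term converts $\Gamma(s)$ times that block into $f(z,s:c,d)$. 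The scalars bookkeep: $(e^{-2\pi is}-1)/(e^{2\pi is}-1)=-e^{-2\pi is}$, which together with the $(-1)^{-s}=e^{i\pi s}$ coming from $Mz+N=-[\,\cdots]$ and the sign produced by the geometric summation gives precisely the $e^{-\pi is}$ in the statement, while $\chi(-1)\overline{\chi}(-1)=1$ kills the remaining signs. This proves the identity for $\operatorname{Re}(s)>2$; since each $f(z,s:c,d)$ is entire in $s$ (the integrand decays exponentially at the ends of $C$ and $C$ avoids $u=0$) and $G(z,s:\chi),G(z,s:\overline{\chi})$ are entire in $s$ for non--principal $\chi$, the identity persists for all $s$ by analytic continuation.

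The main obstacle I anticipate is the branch bookkeeping: reconciling the principal branch $-\pi\le\arg<\pi$ of the Eisenstein summands, the $2\pi ls$--jump of Lemma~\ref{le1}, and the convention $0<\arg u<2\pi$ inside the Hankel integral, so that every exponential factor combines to exactly $e^{-\pi is}$; and, alongside it, checking that the residue stratification is a genuine bijection producing precisely the weights $\overline{\chi}(\mu c+j)\chi([dj/c]-\nu)$ and the fractional shift $\{dj/c\}$ inside $f$. For the second case one uses instead $b\equiv c\equiv 0\pmod{k}$, so the character factor collapses to $\overline{\chi}(a)\chi(d)\,\chi(M)\overline{\chi}(N)$; the $l=0$ terms give the main term $\overline{\chi}(a)\chi(d)\Gamma(s)G(z,s:\chi)$, and the same stratification, now with $\chi(M)=\chi(-1)\chi(j)$ and $\overline{\chi}(N)=\overline{\chi}(-1)\overline{\chi}([dj/c]+d\mu-\nu)$, yields the triple sum with weight $\chi(j)\overline{\chi}([dj/c]+d\mu-\nu)$, as asserted.
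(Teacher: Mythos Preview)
The paper does not contain a proof of Theorem~\ref{tk1}: it is quoted verbatim (specialized to $r_{1}=r_{2}=0$) from Berndt \cite[Theorem~2]{b2} and used as a black box in the proofs of Theorems~\ref{tk2} and~\ref{tk3}. So there is no ``paper's own proof'' to compare your attempt against.

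That said, your sketch is a faithful outline of Berndt's original argument in \cite{b2}: the unimodular change of summation variables $(M,N)=(ma+nc,mb+nd)$, the use of Lewittes' lemma (Lemma~\ref{le1} here) to control the branch of $(Mz+N)^{-s}$, the collapse of the character factor under the congruence hypotheses on $a,b,c,d$, the stratification of the defect region $\{M\le 0,\ dM-cN>0\}$ into residue classes $(j,\mu,\nu)$ modulo $c$ and $k$, and finally Hankel's loop integral to convert each geometric block into $f(z,s:c,d)$ followed by analytic continuation in $s$. The character reductions you record are correct, and the obstacle you flag---reconciling the three branch conventions so that the scalar comes out to exactly $e^{-\pi is}$---is indeed where the care lies in Berndt's proof. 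Since this is a cited result rather than something the present paper proves, your sketch is appropriate but, strictly speaking, not needed for the paper's own logical flow.
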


\section{Transformation Formulas}

In the sequel, unless otherwise stated, we assume that $k$\ is odd.

Put $B^{\prime}\left(  z,s:\chi\right)  =\left(  1+e^{\pi is}\right)  B\left(
z,s:\chi\right)  .$ We then use the relation%
\begin{equation}
B\left(  z,s:\chi\right)  =A\left(  \frac{z}{2},s:\chi\right)  -\chi\left(
2\right)  2^{s-1}A\left(  z,s:\chi\right)  \label{4}%
\end{equation}
in order to achieve transformation formulas for $B^{\prime}\left(
z,s:\chi\right)  .$ 

\begin{theorem}
\label{tk2}Let $Tz=\left(  az+b\right)  /\left(  cz+d\right)  $ with $b$ is
even. If $a\equiv d\equiv0(mod$ $k),$ then for $z\in\mathbb{K}$ and all $s$
\begin{align}
&  \left(  cz+d\right)  ^{-s}G\left(  \overline{\chi}\right)  B^{\prime
}\left(  Tz,s:\chi\right)  =\overline{\chi}\left(  \frac{b}{2}\right)
\chi(2c)G(\chi)B^{\prime}\left(  z,s:\overline{\chi}\right) \nonumber\\
&  +\overline{\chi}\left(  \frac{b}{2}\right)  \chi(2c)\left(  -\frac{k}{2\pi
i}\right)  ^{s}e^{-\pi is}\sum\limits_{\mu=0}^{k-1}\sum\limits_{\nu=0}%
^{k-1}\left(  \sum\limits_{j=1}^{2c}\overline{\chi}\left(  2c\mu+j\right)
\chi\left(  \left[  \tfrac{dj}{2c}\right]  -\nu\right)  f\left(  \frac{z}%
{2},s:2c,d\right)  \right. \nonumber\\
&  \left.  -\overline{\chi}(2)2^{s-1}\sum\limits_{j=1}^{c}\overline{\chi
}\left(  \mu c+j\right)  \chi\left(  \left[  \tfrac{dj}{c}\right]
-\nu\right)  f(z,s:c,d)\right)  . \label{60}%
\end{align}
If $b\equiv c\equiv0(mod$ $k),$ then for $z\in\mathbb{K}$ and all $s$
\begin{align}
&  \left(  cz+d\right)  ^{-s}G\left(  \overline{\chi}\right)  B^{\prime
}\left(  Tz,s:\chi\right)  =\overline{\chi}(a)\chi(d)G(\overline{\chi
})B^{\prime}\left(  z,s:\chi\right) \nonumber\\
&  +\overline{\chi}(a)\chi(d)\left(  -\tfrac{k}{2\pi i}\right)  ^{s}e^{-\pi
is}\sum\limits_{\mu=0}^{k-1}\sum\limits_{\nu=0}^{k-1}\left(  \sum
\limits_{j=1}^{2c}\chi\left(  j\right)  \overline{\chi}\left(  \left[
\tfrac{\left(  d\right)  j}{2c}\right]  +d\mu-\nu\right)  f\left(  \frac{z}%
{2},s:2c,d\right)  \right. \nonumber\\
&  \left.  -\chi(2)2^{s-1}\sum\limits_{j=1}^{c}\chi\left(  j\right)
\overline{\chi}\left(  \left[  \tfrac{dj}{c}\right]  +d\mu-\nu\right)
f(z,s:c,d)\right)  , \label{61}%
\end{align}
where $f(\frac{z}{2},s:2c,d)$ and $f(z,s:c,d)$ are given by (\ref{51}).
\end{theorem}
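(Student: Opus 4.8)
The plan is to reduce both formulas to Berndt's transformation formula (Theorem~\ref{tk1}) by combining the identity~(\ref{4}) with the relation $\Gamma(s)G(z,s:\chi)=G(\overline{\chi})(-2\pi i/k)^{s}H(z,s:\chi)$. Multiplying~(\ref{4}) by $1+e^{\pi is}$ rewrites it as
\[
B^{\prime}(z,s:\chi)=H\!\left(\tfrac{z}{2},s:\chi\right)-\chi(2)2^{s-1}H(z,s:\chi),
\]
so, after replacing $z$ by $Tz$, it suffices to transform the two pieces $H(Tz,s:\chi)$ and $H\!\left(\tfrac{Tz}{2},s:\chi\right)$ separately. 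For the first piece I would apply Theorem~\ref{tk1} to $\Gamma(s)G(Tz,s:\chi)$ with the given $T$ and then convert $\Gamma(s)G\mapsto H$ on both sides; this replaces $\Gamma(s)G(z,s:\overline{\chi})$ by $G(\chi)H(z,s:\overline{\chi})$, turns the outer constant into $(-k/2\pi i)^{s}$, and leaves the loop integrals $f(z,s:c,d)$ of~(\ref{51}) intact.

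The second piece, $H\!\left(\tfrac{Tz}{2},s:\chi\right)$, is where the hypotheses ``$b$ even'' and ``$k$ odd'' enter. Introduce the integer matrix $T^{\prime}=\left(\begin{smallmatrix}a & b/2\\ 2c & d\end{smallmatrix}\right)$; then $ad-bc=1$ gives $\det T^{\prime}=1$ and $2c>0$, a short computation yields $T^{\prime}(z/2)=Tz/2$, and $2c\cdot\tfrac{z}{2}+d=cz+d$ so that no branch ambiguity arises in $(cz+d)^{-s}$. Since $k$ is odd, $2$ is invertible modulo $k$, hence the congruence hypotheses of Theorem~\ref{tk1} carry over to $T^{\prime}$: in the first case $a\equiv d\equiv0\ (\mathrm{mod}\ k)$ is unchanged, while in the second case $b\equiv c\equiv0\ (\mathrm{mod}\ k)$ forces $b/2\equiv 2c\equiv0\ (\mathrm{mod}\ k)$. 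Also $z\in\mathbb{K}$ gives $\mathrm{Re}(z/2)>-d/(2c)$, so $z/2$ lies in the quarter-plane $\{x+iy:x>-d/(2c),\ y>0\}$ attached to $T^{\prime}$. I can then apply Theorem~\ref{tk1} to $\Gamma(s)G(T^{\prime}(z/2),s:\chi)$ — that is, with $c,d,z$ there replaced by $2c,d,z/2$ — and convert $\Gamma(s)G\mapsto H$, obtaining $H(Tz/2,s:\chi)$ expressed through $G(\chi)H(z/2,s:\overline{\chi})$ and the integrals $f(z/2,s:2c,d)$, with prefactor $\overline{\chi}(b/2)\chi(2c)$.

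Finally I would assemble the two contributions. Using complete multiplicativity of $\chi$ together with $\chi(2)\overline{\chi}(2)=1$ (which holds because $k$ is odd), one checks that $\chi(2)2^{s-1}\overline{\chi}(b)\chi(c)=\overline{\chi}(b/2)\chi(2c)\cdot\overline{\chi}(2)2^{s-1}$, so the factor $\overline{\chi}(b/2)\chi(2c)$ can be pulled out of the whole expression; what remains multiplying $G(\chi)$ is $H(z/2,s:\overline{\chi})-\overline{\chi}(2)2^{s-1}H(z,s:\overline{\chi})=B^{\prime}(z,s:\overline{\chi})$, and the loop-integral terms collect precisely into the bracketed combination of $f(z/2,s:2c,d)$ and $f(z,s:c,d)$ displayed in~(\ref{60}); reordering the summations over $\mu,\nu$ then gives~(\ref{60}). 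Formula~(\ref{61}) is obtained in the same way from the second half of Theorem~\ref{tk1}: there both applications carry the common prefactor $\overline{\chi}(a)\chi(d)$, so no $\chi(2)\overline{\chi}(2)$ reduction is needed and the main term comes out directly as $\overline{\chi}(a)\chi(d)G(\overline{\chi})B^{\prime}(z,s:\chi)$. I expect the only real obstacle to be the bookkeeping: correctly introducing $T^{\prime}$ and checking that both the congruence conditions of Theorem~\ref{tk1} and the domain condition for $z/2$ survive the substitutions $z\mapsto z/2$, $c\mapsto2c$; granting that, the rest is a formal manipulation of the two instances of the transformation formula.
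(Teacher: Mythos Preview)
Your proposal is correct and follows essentially the same route as the paper: both introduce the auxiliary transformation $U=\left(\begin{smallmatrix}a & b/2\\ 2c & d\end{smallmatrix}\right)$ (your $T'$), use $U(z/2)=\tfrac{1}{2}Tz$ together with the decomposition $B'(z,s:\chi)=H(z/2,s:\chi)-\chi(2)2^{s-1}H(z,s:\chi)$, and then apply Theorem~\ref{tk1} twice. The paper's proof is terser and omits the verifications you supply (that the congruences survive because $k$ is odd, the domain check for $z/2$, and the $\chi(2)\overline{\chi}(2)=1$ reduction), but the underlying argument is identical.
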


\begin{proof}
Let $b$ be even and consider $Uz=\left(  az+\frac{b}{2}\right)  /\left(
2cz+d\right)  $. Since $U\left(  z/2\right)  =\frac{1}{2}T(z)$ we have by
(\ref{4})
\[
\left(  cz+d\right)  ^{-s}B^{\prime}(Tz,s:\chi)=\left(  2c\frac{z}%
{2}+d\right)  ^{-s}H\left(  U\left(  z/2\right)  ,s:\chi\right)  -\chi\left(
2\right)  2^{s-1}\left(  cz+d\right)  ^{-s}H(Tz,s:\chi)
\]
Applying Theorem \ref{tk1} to the right-hand side of equality we get the
desired results.
\end{proof}

This theorem may be simplified for nonpositive integer value of $s$ as in the following.

\begin{corollary}
\label{sk2}Let $p$ be odd and $Tz=\left(  az+b\right)  /\left(  cz+d\right)  $
with $b$ is even. If $a\equiv d\equiv0(mod$ $k),$ then for $z\in\mathbb{H}$
\begin{equation}
\left(  cz+d\right)  ^{p-1}G\left(  \overline{\chi}\right)  B^{\prime}\left(
Tz,1-p:\chi\right)  =\overline{\chi}(\frac{b}{2})\chi(2c)\left(
G(\chi)B^{\prime}\left(  z,1-p:\overline{\chi}\right)  +\frac{\left(  2\pi
i\right)  ^{p}\chi(-1)}{\left(  p+1\right)  !}g_{1}(c,d;z,p;\overline{\chi
})\right)  . \label{53a}%
\end{equation}
If $b\equiv c\equiv0(mod$ $k),$ then for $z\in\mathbb{H}$
\begin{align}
&  \hspace{-0.3in}\left(  cz+d\right)  ^{p-1}G\left(  \overline{\chi}\right)
B^{\prime}\left(  Tz,1-p:\chi\right) \nonumber\\
&  \ \hspace{-0.3in}=\overline{\chi}(a)\chi(d)\left(  G(\overline{\chi
})B^{\prime}\left(  z,1-p:\chi\right)  +\frac{\left(  2\pi i\right)
^{p}\overline{\chi}(-1)}{\left(  p+1\right)  !}g_{1}(c,d;z,p;\chi)\right)  ,
\label{62a}%
\end{align}
where
\begin{align}
&  g_{1}(c,d;z,p;\chi)\nonumber\\
&  =-\sum\limits_{m=1}^{p}\binom{p+1}{m}\left(  -\left(  cz+d\right)  \right)
^{m-1}k^{m-p}\frac{m}{2^{m}}\sum\limits_{n=1}^{ck}\chi(n)\overline
{B}_{p+1-m,\chi}\left(  \frac{dn}{2c}\right)  \overline{E}_{m-1}\left(
\frac{n}{ck}\right)  . \label{54a}%
\end{align}

\end{corollary}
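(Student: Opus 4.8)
The plan is to specialize Theorem~\ref{tk2} to $s=1-p$ with $p$ odd and then carry out the residue computation that converts the loop integrals $f(z,s:c,d)$ and $f(\tfrac z2,s:2c,d)$ into finite sums of generalized Bernoulli and Euler functions. First I would recall that for a nonpositive integer argument the Hankel-type loop integral picks up, via the residue theorem, the coefficient of $u^{-1}$ in the Laurent expansion of the integrand; since $1/(e^{-(cz+d)ku}-1)(e^{ku}-1)$ has a double pole at $u=0$, combining it with $u^{s-1}=u^{-p}$ and the two exponential factors gives a residue that is a finite $\mathbb{Q}$-combination of products $B_{a}(\cdot)B_{b}(\cdot)$ coming from the generating functions $te^{xt}/(e^t-1)=\sum B_n(x)t^n/n!$. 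Concretely, $f(z,1-p:c,d)$ becomes (up to the standard factor $2\pi i$ from encircling the origin and a sign from the branch $0<\arg u<2\pi$) a sum over $m$ of binomial coefficients times $(cz+d)^{m-1}$ times values $B_{p+1-m}((\nu+\{dj/c\})/k)\,B_m(\cdot)$; I would then perform the inner sums over $j$ (from $1$ to $2c$ or $1$ to $c$) and over $\nu$ (from $0$ to $k-1$), which by the defining relation (\ref{3}) for $\overline B_{n,\chi}$ collapse the $\nu$- and $k$-parts into $\overline B_{p+1-m,\chi}(dn/(2c))$ after reindexing $n = \mu(2c)+j$ or similar.

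The second key step is to see the Euler function $\overline E_{m-1}$ emerge. After the $\nu$-summation produces $\overline B_{p+1-m,\chi}$, the remaining structure involves the $2c$ versus $c$ dichotomy: the term with $f(\tfrac z2,s:2c,d)$ carries modulus $2c$ while the term with $\chi(2)2^{s-1}f(z,s:c,d)$ carries modulus $c$, and their difference is exactly the alternating combination to which the identity (\ref{11}), namely $r^{n-1}\sum_{j=0}^{r-1}(-1)^j\overline B_n((x+j)/r)=-\tfrac n2\overline E_{n-1}(x)$ with $r=2$, applies. Thus the combination $\sum_{j=1}^{2c}\cdots - \chi(2)2^{m-p}\sum_{j=1}^{c}\cdots$ in each residue becomes, after splitting the $j$-sum over $2c$ into even and odd $j$ and using $2^{s-1}=2^{-p}$, a single sum over $n$ from $1$ to $ck$ of $\chi(n)\overline B_{p+1-m,\chi}(dn/(2c))\overline E_{m-1}(n/(ck))$, which is precisely the inner sum defining $g_1(c,d;z,p;\chi)$ in (\ref{54a}). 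The prefactors $(-k/2\pi i)^s e^{-\pi i s}$ evaluated at $s=1-p$, together with $\Gamma(1-p)$ hidden in the normalization $H=\Gamma(s)^{-1}(\cdots)$ versus $B'=(1+e^{\pi is})B$, reorganize into the stated $(2\pi i)^p/(p+1)!$ and the character sign factors $\chi(-1)$ or $\overline\chi(-1)$; here one uses that $p$ is odd so that $e^{-\pi i(1-p)}=e^{\pi i p}\cdot e^{-\pi i}$ simplifies cleanly and $1+e^{\pi i(1-p)}$ does not vanish.

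I expect the main obstacle to be the careful bookkeeping of constants and signs when passing from $s$ general to $s=1-p$: the factor $\Gamma(s)$ has a pole at nonpositive integers, so one cannot literally substitute; instead one must track how $\Gamma(s)f(z,s:c,d)$ stays finite (the loop integral vanishes to compensate, or rather the combination $\Gamma(s)\times(\text{loop integral})$ is the entire function whose value at $s=1-p$ is $2\pi i/\Gamma(p)$ times the residue — this is the classical Hankel contour evaluation $\frac{1}{\Gamma(s)}=\frac{1}{2\pi i}\int_C e^u u^{-s}\,du$). Reconciling this with Berndt's normalization in Theorem~\ref{tk1}, where $G(z,s:\chi)$ already carries a $\Gamma(s)$ on the left, and then dividing through to isolate $B'$, is the delicate point; I would handle it by writing everything in terms of $H(z,s:\chi)=(1+e^{\pi is})A(z,s:\chi)$ and $B'(z,s:\chi)=A(z/2,s:\chi)(1+e^{\pi i s})-\chi(2)2^{s-1}H(z,s:\chi)$ as in (\ref{4}), specializing each piece, and only at the end collecting the $(2\pi i)^p/(p+1)!$. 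The reduction of the triple sum $\sum_{j}\sum_\mu\sum_\nu$ to the single sum $\sum_{n=1}^{ck}$ via (\ref{3}) is routine reindexing, and the appearance of $\binom{p+1}{m}$ and $m/2^m$ is forced by the $r=2$ Raabe/Euler identity (\ref{11}) together with the binomial expansion of the residue, so those require no genuine new idea once the constant is pinned down.
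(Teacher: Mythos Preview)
Your plan is correct and matches the paper's proof essentially step for step: specialize Theorem~\ref{tk2} at $s=1-p$, evaluate each $f$ by the residue theorem to obtain the Bernoulli-polynomial sum (\ref{63}), collapse the $\nu$- and $\mu$-sums via (\ref{3}) into generalized Bernoulli functions, and then combine the $2c$-sum with the $c$-sum using the Raabe identity (\ref{1}) and the Euler-function identity (\ref{11}) with $r=2$ to produce the $\overline{E}_{m-1}(n/ck)$ factor in $g_1$. One remark: your worry about the pole of $\Gamma(s)$ at $s=1-p$ is unnecessary, since Theorem~\ref{tk2} is already stated for $B'(z,s:\chi)$ with prefactor $(-k/2\pi i)^s e^{-\pi is}$ and no $\Gamma(s)$ present---the substitution $s=1-p$ is therefore direct, and the only genuine work is the residue and the reindexing you describe.
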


\begin{proof}
For $s=1-p$ in Theorem \ref{tk2}, by residue theorem, we have%
\begin{equation}
f(z,1-p:c,d)=\frac{2\pi ik^{p-1}}{\left(  p+1\right)  !}\sum_{m=0}^{p+1}%
\binom{p+1}{m}\left(  -\left(  cz+d\right)  \right)  ^{m-1}B_{p+1-m}\left(
\frac{\nu+\left\{  dj/c\right\}  }{k}\right)  B_{m}\left(  \frac{\mu c+j}%
{ck}\right)  . \label{63}%
\end{equation}
Let $b$ be even and $a\equiv d\equiv0(mod$ $k).$ Substituting (\ref{63}) in
(\ref{60}) and by the fact that the sum over $\mu$ is zero for $m=0$ and the
sum over $\nu$ is zero for $m=p+1$, we have
\begin{align}
&  \left(  cz+d\right)  ^{p-1}G\left(  \overline{\chi}\right)  B^{\prime
}\left(  Vz,1-p:\chi\right)  =\overline{\chi}\left(  \frac{b}{2}\right)
\chi(2c)G(\chi)B^{\prime}\left(  z,1-p:\overline{\chi}\right) \nonumber\\
&  \quad+\overline{\chi}\left(  \frac{b}{2}\right)  \chi(2c)\frac{\left(  2\pi
i\right)  ^{p}}{\left(  p+1\right)  !}\sum_{m=1}^{p}\binom{p+1}{m}\left(
-\left(  cz+d\right)  \right)  ^{m-1}\nonumber\\
&  \quad\times\left\{  \sum\limits_{\mu=0}^{k-1}\sum\limits_{\nu=0}^{k-1}%
\sum\limits_{j=1}^{2c}\overline{\chi}\left(  2\mu c+j\right)  \chi\left(
\left[  \tfrac{dj}{2c}\right]  -\nu\right)  B_{p+1-m}\left(  \frac
{\nu+\left\{  dj/2c\right\}  }{k}\right)  B_{m}\left(  \frac{2\mu c+j}%
{2ck}\right)  \right. \label{64}\\
&  \qquad\left.  -\frac{\overline{\chi}(2)}{2^{p}}\sum\limits_{\mu=0}%
^{k-1}\sum\limits_{\nu=0}^{k-1}\sum\limits_{j=1}^{c}\overline{\chi}\left(  \mu
c+j\right)  \chi\left(  \left[  \tfrac{dj}{c}\right]  -\nu\right)
B_{p+1-m}\left(  \frac{\nu+\left\{  dj/c\right\}  }{k}\right)  B_{m}\left(
\frac{\mu c+j}{ck}\right)  \right\}  . \label{65}%
\end{align}

We first evaluate the triple sum in (\ref{64}). Observe that the triple sum is
unchanged if we replace $B_{p+1-m}\left(  \frac{\nu+\left\{  dj/2c\right\}
}{k}\right)  $ by $\overline{B}_{p+1-m}\left(  \frac{\nu+\left\{
dj/2c\right\}  }{k}\right)  $ since $B_{p+1-m}\left(  \frac{\nu+\left\{
dj/2c\right\}  }{k}\right)  =\overline{B}_{p+1-m}\left(  \frac{\nu+\left\{
dj/2c\right\}  }{k}\right)  $ when $0<\frac{\nu+\left\{  dj/2c\right\}  }%
{k}<1,$ and $\chi\left(  d\right)  =0$ when $\frac{\nu+\left\{  dj/2c\right\}
}{k}=0$ ($d\equiv0(mod$ $k)$)$.$ By the same reason the triple sum is
unchanged when $B_{m}\left(  \frac{2\mu c+j}{2ck}\right)  $ is replaced by
$\overline{B}_{m}\left(  \frac{2\mu c+j}{2ck}\right)  .$ By (\ref{3}),
\begin{align*}
&  \sum\limits_{\mu=0}^{k-1}\sum\limits_{j=1}^{2c}\overline{\chi}\left(  2\mu
c+j\right)  \overline{B}_{m}\left(  \tfrac{2\mu c+j}{2ck}\right)
\sum\limits_{\nu=0}^{k-1}\chi\left(  \left[  \tfrac{dj}{2c}\right]
-\nu\right)  \overline{B}_{p+1-m}\left(  \tfrac{\nu+\frac{dj}{2c}-\left[
\frac{dj}{2c}\right]  }{k}\right) \\
&  \quad=\chi\left(  -1\right)  k^{m-p}\sum\limits_{\mu=0}^{k-1}%
\sum\limits_{j=1}^{2c}\overline{\chi}\left(  2\mu c+j\right)  \overline
{B}_{p+1-m,\overline{\chi}}\left(  \frac{dj}{2c}\right)  \overline{B}%
_{m}\left(  \frac{2\mu c+j}{2ck}\right)  .
\end{align*}
Put $2\mu c+j=n$, where $1\leq n\leq2ck$ in the right side of equality above.
Since $\overline{B}_{p,\chi}\left(  -x\right)  =\left(  -1\right)
^{p}\overline{\chi}\left(  -1\right)  \overline{B}_{p,\chi}\left(  x\right)  $
and $\overline{B}_{p,\overline{\chi}}\left(  x+k\right)  =\overline
{B}_{p,\overline{\chi}}\left(  x\right)  ,$ (\ref{64}) becomes%
\begin{align}
&  \chi\left(  -1\right)  k^{m-p}\sum\limits_{n=1}^{2ck}\overline{\chi}\left(
n\right)  \overline{B}_{p+1-m,\overline{\chi}}\left(  \frac{dn}{2c}\right)
\overline{B}_{m}\left(  \frac{n}{2ck}\right) \label{66a}\\
&  \quad=\chi\left(  -1\right)  k^{m-p}\sum\limits_{n=1}^{ck}\overline{\chi
}\left(  n\right)  \overline{B}_{p+1-m,\overline{\chi}}\left(  \frac{dn}%
{2c}\right)  2\overline{B}_{m}\left(  \frac{n}{2ck}\right)  . \label{66}%
\end{align}
Secondly, (\ref{65}) can be evaluated as
\begin{equation}
\chi\left(  -1\right)  k^{m-p}\overline{\chi}(2)2^{-p}\sum\limits_{n=1}%
^{ck}\overline{\chi}\left(  n\right)  \overline{B}_{p+1-m,\overline{\chi}%
}\left(  \frac{dn}{c}\right)  \overline{B}_{m}\left(  \frac{n}{ck}\right)  .
\label{67}%
\end{equation}
It can be seen from (\ref{3}) and (\ref{1}) that
\begin{equation}
\chi(r)r^{1-m}\overline{B}_{m,\chi}\left(  rx\right)  =\sum_{j=0}%
^{r-1}\overline{B}_{m,\chi}\left(  x+\frac{jk}{r}\right)  \label{68}%
\end{equation}
for $(r,k)=1.$ Thus, in the light of (\ref{68}), (\ref{67}) becomes
\begin{equation}
\chi\left(  -1\right)  k^{m-p}\sum\limits_{n=1}^{ck}\overline{\chi}\left(
n\right)  \overline{B}_{p+1-m,\overline{\chi}}\left(  \frac{dn}{2c}\right)
2^{1-m}\overline{B}_{m}\left(  \frac{n}{ck}\right)  . \label{69}%
\end{equation}
Then, utilizing (\ref{1}) and (\ref{11}), (\ref{66}) and (\ref{69}) yield
(\ref{53a}) for $z\in\mathbb{K}$. Since the functions $g_{1}(c,d;z,p)$ and
$B^{\prime}\left(  z,1-p:\chi\right)  $ are analytic on $\mathbb{H},$
(\ref{53a}) is valid for all $z\in\mathbb{H}$ by analytic continuation.

The proof for $b\equiv c\equiv0(mod$ $k)$ is analogous.
\end{proof}

By taking $Vz=T\left(  z+k\right)  =\left(  az+b+ak\right)  /\left(
cz+d+ck\right)  $ with $a$ and $b$ are odd, instead of $Tz=\left(
az+b\right)  /\left(  cz+d\right)  $ in Corollary \ref{sk2} we obtain the
following result which involves a different sum.

\begin{corollary}
\label{sk1}Let $p$ be odd and let $Vz=\left(  az+b+ak\right)  /\left(
cz+d+ck\right)  $ with $a$ and $b$ are odd. If $a\equiv d\equiv0(mod$ $k),$
then for $z\in\mathbb{H}$
\begin{align}
&  \left(  cz+d+ck\right)  ^{p-1}G\left(  \overline{\chi}\right)  B^{\prime
}\left(  Vz,1-p:\chi\right) \nonumber\\
&  =\overline{\chi}\left(  \frac{b+ak}{2}\right)  \chi(2c)\left\{
G(\chi)B^{\prime}\left(  z,1-p:\overline{\chi}\right)  +\frac{\left(  2\pi
i\right)  ^{p}\chi(-1)}{\left(  p+1\right)  !}g_{1}(c,d+ck;z,p;\overline{\chi
})\right\}  .\text{ } \label{53}%
\end{align}
If $b\equiv c\equiv0(mod$ $k)$, then for $z\in\mathbb{H}$
\begin{align}
&  \left(  cz+d+ck\right)  ^{p-1}G\left(  \overline{\chi}\right)  B^{\prime
}\left(  Vz,1-p:\chi\right) \nonumber\\
&  =\overline{\chi}(a)\chi(d)G(\overline{\chi})B^{\prime}\left(
z,1-p:\chi\right)  +\overline{\chi}(a)\chi(d)\frac{\left(  2\pi i\right)
^{p}\overline{\chi}(-1)}{\left(  p+1\right)  !}g_{1}(c,d+ck;z,p;\chi),
\label{62}%
\end{align}
where
\begin{align}
&  g_{1}(c,d+ck;z,p;\chi)\nonumber\\
&  =-\sum\limits_{m=1}^{p}\binom{p+1}{m}\left(  -\left(  cz+d+ck\right)
\right)  ^{m-1}k^{m-p}\frac{m}{2^{m}}\sum\limits_{n=1}^{ck}\chi(n)\overline
{B}_{p+1-m,\chi}\left(  \frac{\left(  d+ck\right)  n}{2c}\right)  \overline
{E}_{m-1}\left(  \frac{n}{ck}\right)  . \label{54}%
\end{align}

\end{corollary}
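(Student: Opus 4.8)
The plan is to obtain Corollary \ref{sk1} from Corollary \ref{sk2} by a single change of the modular matrix, so that no new contour integral or Bernoulli computation is required. Writing $T=\left(\begin{smallmatrix}a & b\\ c & d\end{smallmatrix}\right)$, I would introduce $V=T\left(\begin{smallmatrix}1 & k\\ 0 & 1\end{smallmatrix}\right)$; concretely
\[
V=\begin{pmatrix} a & b+ak\\ c & d+ck\end{pmatrix},\qquad Vz=\frac{a(z+k)+b}{c(z+k)+d}=T(z+k),
\]
which is exactly the transformation named in the statement. Right multiplication by $\left(\begin{smallmatrix}1 & k\\ 0 & 1\end{smallmatrix}\right)$ preserves the determinant, so $a(d+ck)-c(b+ak)=ad-bc=1$, and the lower-left entry is still $c>0$; hence $V$ is again an admissible modular transformation to which Corollary \ref{sk2} applies, now with the role of ``$b$'' played by $b+ak$ and the role of ``$d$'' by $d+ck$.

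Next I would check that the hypotheses of Corollary \ref{sk2} genuinely hold for $V$. That corollary requires the upper-right entry to be even: here $b+ak$ is the sum of the odd integer $b$ and the product of the odd integers $a$ and $k$ (recall $k$ is odd throughout Section 3), hence $b+ak$ is even, as needed. The two congruence alternatives transfer verbatim, since $b+ak\equiv b\pmod{k}$ and $d+ck\equiv d\pmod{k}$; thus ``$a\equiv d\equiv 0\pmod{k}$'' (resp. ``$b\equiv c\equiv 0\pmod{k}$'') holds for $V$ precisely when it holds for $T$.

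Finally I would read off the conclusion: substituting $b\mapsto b+ak$ and $d\mapsto d+ck$ everywhere in (\ref{53a}) and in the definition (\ref{54a}) of $g_{1}$ — that is, $(cz+d)^{p-1}\mapsto(cz+d+ck)^{p-1}$, $(-(cz+d))^{m-1}\mapsto(-(cz+d+ck))^{m-1}$, and $\overline{B}_{p+1-m,\chi}(dn/(2c))\mapsto\overline{B}_{p+1-m,\chi}((d+ck)n/(2c))$ — turns (\ref{54a}) into (\ref{54}) and (\ref{53a}) into (\ref{53}), and likewise (\ref{62a}) into (\ref{62}) in the second case. Since Corollary \ref{sk2} has already been extended from $\mathbb{K}$ to all of $\mathbb{H}$ by analytic continuation, nothing further is needed about the domain, even though the quarter-plane naturally attached to $V$ is $\{x+iy: x>-(d+ck)/c,\ y>0\}$ rather than $\mathbb{K}$. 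There is essentially no obstacle; the only two points deserving a line of care are the parity of $b+ak$ (which is precisely where the hypothesis ``$k$ odd'' enters) and the bookkeeping that \emph{every} occurrence of $d$ in $g_{1}$ — both in the polynomial prefactor and inside the generalized Bernoulli function — is the one replaced by $d+ck$, which is exactly how (\ref{54}) is displayed.
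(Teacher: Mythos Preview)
Your proposal is correct and follows exactly the approach indicated in the paper: the paper's entire argument for Corollary \ref{sk1} is the single sentence preceding its statement, namely ``By taking $Vz=T(z+k)=(az+b+ak)/(cz+d+ck)$ with $a$ and $b$ are odd, instead of $Tz=(az+b)/(cz+d)$ in Corollary \ref{sk2} we obtain the following result,'' and you have correctly fleshed out this substitution, including the key parity check that $b+ak$ is even and the observation that $\chi(d+ck)=\chi(d)$.
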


We note that this result shows that the sum $S(d,c)$\ can arise in the formula
for $\log\theta_{4}\left(  \frac{az+b+a}{cz+d+c}\right)  ,$\ where $a$\ and
$b$\ are odd.

We conclude this section by assuming that $a$ is even instead of $b$ in
Theorem \ref{tk2}.

\begin{theorem}
\label{tk3}Let $Tz=\left(  az+b\right)  /\left(  cz+d\right)  $ with $a$ is
even. If $a\equiv d\equiv0(mod$ $k),$ then for $z\in\mathbb{K}$ and
$s\in\mathbb{C}$
\begin{align}
&  \hspace{-0.3in}\frac{2^{1-s}}{\left(  cz+d\right)  ^{s}}G\left(
\overline{\chi}\right)  B^{\prime}\left(  Tz,s:\chi\right) \nonumber\\
&  \hspace{-0.2in}=\overline{\chi}(b)\chi(c)G(\chi)\left\{  2H\left(
2z,s:\overline{\chi}\right)  -\chi(2)H\left(  z,s:\overline{\chi}\right)
\right\} \nonumber\\
&  \hspace{-0.2in}+\overline{\chi}(b)\chi(c)\left(  -\frac{k}{2\pi i}\right)
^{s}e^{-\pi is}\sum\limits_{j=1}^{c}\sum\limits_{\mu=0}^{k-1}\sum
\limits_{\nu=0}^{k-1}\overline{\chi}\left(  c\mu+j\right) \nonumber\\
&  \hspace{-0.2in}\times\left\{  2\chi\left(  \left[  \tfrac{2dj}{c}\right]
-\nu\right)  f(2z,s:c,2d)-\chi(2)\chi\left(  \left[  \tfrac{dj}{c}\right]
-\nu\right)  f(z,s:c,d)\right\}  .\text{ } \label{55}%
\end{align}
If $b\equiv c\equiv0(mod$ $k)$, then for $z\in\mathbb{K}$ and $s\in\mathbb{C}$%
\begin{align}
&  2^{1-s}\left(  cz+d\right)  ^{-s}G\left(  \overline{\chi}\right)
B^{\prime}\left(  Tz,s:\chi\right) \nonumber\\
&  =\overline{\chi}\left(  \frac{a}{2}\right)  \chi(2d)G(\overline{\chi
})\left\{  2H\left(  2z,s:\chi\right)  -\overline{\chi}(2)H\left(
z,s:\chi\right)  \right\} \nonumber\\
&  \quad+\overline{\chi}\left(  \frac{a}{2}\right)  \chi(2d)\left(  -\frac
{k}{2\pi i}\right)  ^{s}e^{-\pi is}\sum\limits_{j=1}^{c}\sum\limits_{\mu
=0}^{k-1}\sum\limits_{\nu=0}^{k-1}\chi\left(  j\right) \nonumber\\
&  \qquad\times\left\{  2\overline{\chi}\left(  \left[  \tfrac{2dj}{c}\right]
+2d\mu-\nu\right)  f(2z,s:c,2d)-\overline{\chi}(2)\overline{\chi}\left(
\left[  \tfrac{dj}{c}\right]  +d\mu-\nu\right)  f(z,s:c,d)\right\}  .
\label{56}%
\end{align}

\end{theorem}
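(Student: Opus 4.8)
The plan is to run the proof of Theorem~\ref{tk2} with the roles of $a$ and $b$ interchanged. Since $a$ is even, write $a=2\alpha$ with $\alpha\in\mathbb{Z}$ and introduce the modular transformation $Vw=(\alpha w+b)/(cw+2d)$; it has determinant $\alpha(2d)-bc=ad-bc=1$, and $V(2z)=(2\alpha z+b)/(2cz+2d)=\tfrac12 Tz$. Multiplying (\ref{4}) by $1+e^{\pi is}$ and recalling $H(w,s:\chi)=(1+e^{\pi is})A(w,s:\chi)$ gives $B^{\prime}(w,s:\chi)=H(w/2,s:\chi)-\chi(2)2^{s-1}H(w,s:\chi)$, and evaluation at $w=Tz$ yields
\[
B^{\prime}(Tz,s:\chi)=H\left(V(2z),s:\chi\right)-\chi(2)2^{s-1}H(Tz,s:\chi).
\]
I would then multiply this identity by $2^{1-s}(cz+d)^{-s}G(\overline{\chi})$. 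Since $c(2z)+2d=2(cz+d)$, one has $(cz+d)^{-s}=2^{s}(c(2z)+2d)^{-s}$, so the first term on the right turns into $2(c(2z)+2d)^{-s}G(\overline{\chi})H(V(2z),s:\chi)$ while the second collapses to $-\chi(2)(cz+d)^{-s}G(\overline{\chi})H(Tz,s:\chi)$.

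Next I would recast Theorem~\ref{tk1} in terms of $H$ rather than $G$: dividing it by $\Gamma(s)(-2\pi i/k)^{s}$ and using $\Gamma(s)G(w,s:\chi)=G(\overline{\chi})(-2\pi i/k)^{s}H(w,s:\chi)$ turns it into a transformation formula for $(cz+d)^{-s}G(\overline{\chi})H(Tz,s:\chi)$ of exactly the form already obtained in the proof of Theorem~\ref{tk2}. Apply this to the two terms above: to the first with the transformation $V$ at the point $2z$ (parameters $(\alpha,b,c,2d)$), and to the second with $T$ at the point $z$ (parameters $(a,b,c,d)$). If $a\equiv d\equiv0\pmod{k}$, then because $k$ is odd we have $\alpha=a/2\equiv0$ and $2d\equiv0\pmod{k}$, so the first case of Theorem~\ref{tk1} applies to both $V$ and $T$; if $b\equiv c\equiv0\pmod{k}$, the second case applies to both. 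The domain restriction causes no trouble: if $z=x+iy\in\mathbb{K}$, i.e. $x>-d/c$, then $2z$ has real part $2x>-2d/c$, which is exactly the half-line bounding the quarter-plane relevant to $V$, while $z$ itself lies in the quarter-plane relevant to $T$; hence Theorem~\ref{tk1} genuinely applies for all $z\in\mathbb{K}$ and all $s$, and no analytic continuation is needed.

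It then remains to collect terms. When $a\equiv d\equiv0\pmod{k}$ both applications carry the common factor $\overline{\chi}(b)\chi(c)$; the $G$-function parts combine into $\overline{\chi}(b)\chi(c)G(\chi)\{2H(2z,s:\overline{\chi})-\chi(2)H(z,s:\overline{\chi})\}$ and the contour-integral parts into the stated triple sum over $f(2z,s:c,2d)$ and $f(z,s:c,d)$, giving (\ref{55}). When $b\equiv c\equiv0\pmod{k}$ the applications produce the coefficients $\overline{\chi}(a/2)\chi(2d)$ from $V$ and $\chi(2)\overline{\chi}(a)\chi(d)$ from $T$; since $(2,k)=1$ we have $\chi(2)\overline{\chi}(2)=1$ and $\overline{\chi}(a)=\overline{\chi}(2)\overline{\chi}(a/2)$, so $\chi(2)\overline{\chi}(a)\chi(d)=\overline{\chi}(2)\,\overline{\chi}(a/2)\chi(2d)$, and the pieces assemble into (\ref{56}). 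I expect the only real difficulty to be bookkeeping --- keeping straight the various factors $2^{\pm s}$ coming from (\ref{4}) and from $c(2z)+2d=2(cz+d)$, and checking that the congruence hypotheses on $(a,b,c,d)$ descend to the auxiliary parameters $(\alpha,b,c,2d)$, which is precisely where the standing assumption that $k$ is odd is used.
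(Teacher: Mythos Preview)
Your proof is correct and follows essentially the same route as the paper: the auxiliary transformation $Vw=(\tfrac{a}{2}w+b)/(cw+2d)$ you introduce is exactly the paper's $W$, and both proofs proceed by writing $B'(Tz,s:\chi)=H(V(2z),s:\chi)-\chi(2)2^{s-1}H(Tz,s:\chi)$ from (\ref{4}) and then applying Theorem~\ref{tk1} to each piece. Your version is simply more detailed in tracking the domain conditions, the congruence hypotheses on $(\alpha,b,c,2d)$, and the character bookkeeping, all of which the paper leaves implicit.
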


\begin{proof}
Let $a$ be even and $Wz=\left(  \frac{a}{2}z+b\right)  /\left(  cz+2d\right)
.$ Since $W\left(  2z\right)  =\frac{1}{2}T(z)$ we have by (\ref{4})%
\[
\left(  cz+d\right)  ^{-s}B^{\prime}(Tz,s:\chi)=2^{s}\left(  2cz+2d\right)
^{-s}H(W\left(  2z\right)  ,s:\chi)-\chi\left(  2\right)  2^{s-1}\left(
cz+d\right)  ^{-s}H(Tz,s:\chi).
\]
Again applying Theorem \ref{tk1} to the right-hand side we get the desired results.
\end{proof}

\begin{corollary}
\label{sk3}Let $p$ be odd and $a$ be even. If $a\equiv d\equiv0(mod$ $k),$
then for $z\in\mathbb{H}$
\begin{align}
&  2^{p}\left(  cz+d\right)  ^{p-1}G\left(  \overline{\chi}\right)  B^{\prime
}\left(  Tz,1-p:\chi\right) \nonumber\\
&  =\overline{\chi}(b)\chi(c)G(\chi)\left\{  2H\left(  2z,1-p:\overline{\chi
}\right)  -\chi(2)H\left(  z,1-p:\overline{\chi}\right)  \right\}
+\overline{\chi}(b)\chi(c)\left(  2\pi i\right)  ^{p}\frac{\chi(-2)}{\left(
p+1\right)  !}g_{2}(c,d;z,p;\overline{\chi}). \label{57}%
\end{align}
If $b\equiv c\equiv0(mod$ $k),$ then for $z\in\mathbb{H}$
\begin{align}
&  2^{p}\left(  cz+d\right)  ^{p-1}G\left(  \overline{\chi}\right)  B^{\prime
}\left(  Tz,1-p:\chi\right) \nonumber\\
&  =\overline{\chi}\left(  \frac{a}{2}\right)  \chi(2d)G(\overline{\chi
})\left\{  2H\left(  2z,1-p:\chi\right)  -\overline{\chi}(2)H\left(
z,1-p:\chi\right)  \right\}  +\overline{\chi}\left(  \frac{a}{2}\right)
\chi(2d)\left(  2\pi i\right)  ^{p}\frac{\overline{\chi}(-2)}{\left(
p+1\right)  !}g_{2}(c,d;z,p;\chi), \label{58}%
\end{align}
where%
\begin{equation}
g_{2}(c,d;z,p;\chi)=-\sum\limits_{m=1}^{p}\binom{p+1}{m}\left(  -\left(
cz+d\right)  \right)  ^{m-1}k^{m-p}\frac{m}{2}\sum\limits_{n=1}^{ck}\left(
-1\right)  ^{n}\chi(n)\overline{B}_{p+1-m,\chi}\left(  \frac{dn}{c}\right)
\overline{E}_{m-1}\left(  \frac{n}{ck}\right)  . \label{59}%
\end{equation}

\end{corollary}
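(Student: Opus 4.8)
The plan is to mimic the proof of Corollary \ref{sk2} with Theorem \ref{tk3} in place of Theorem \ref{tk2}. I carry out the case $a\equiv d\equiv 0\pmod{k}$; the case $b\equiv c\equiv 0\pmod{k}$ is analogous. Setting $s=1-p$ in (\ref{55}), the residue theorem reduces the two loop integrals to finite Bernoulli sums exactly as in (\ref{63}): $f(z,1-p:c,d)$ gives (\ref{63}) itself, while $f(2z,1-p:c,2d)$ gives the same expression with $cz+d$ replaced by $2(cz+d)$ -- hence with an extra factor $2^{m-1}$ -- and $\{dj/c\}$ replaced by $\{2dj/c\}$. Since $p$ is odd the prefactor collapses as $\left(-k/2\pi i\right)^{1-p}e^{-\pi i(1-p)}=(2\pi i)^{p-1}k^{1-p}$, which together with the $2\pi i\,k^{p-1}/(p+1)!$ from each $f$ produces the constant $(2\pi i)^{p}/(p+1)!$ of (\ref{57}) and turns the $H$-part of (\ref{55}) into the $H$-part of (\ref{57}). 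In the triple sum the terms $m=0$ and $m=p+1$ vanish because $\sum_{\mu}\overline{\chi}(c\mu+j)=0$ and $\sum_{\nu}\chi([\,\cdot\,]-\nu)=0$ ($\chi$ non-principal, $\gcd(c,k)=1$), so only $1\le m\le p$ remains, as in (\ref{59}).

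For each such $m$ I would argue as in Corollary \ref{sk2}: replace the Bernoulli polynomials by the Bernoulli functions (the only exceptional argument is $0$, and there the accompanying character value vanishes since $d\equiv 0\pmod{k}$); collapse the inner $\nu$-sum by (\ref{3}), turning $\sum_{\nu}\chi([\tfrac{2dj}{c}]-\nu)\overline{B}_{p+1-m}(\tfrac{\nu+\{2dj/c\}}{k})$ into $\chi(-1)k^{m-p}\overline{B}_{p+1-m,\overline{\chi}}(\tfrac{2dj}{c})$ and likewise for the $\tfrac{dj}{c}$ piece; then set $n=c\mu+j$ ($1\le n\le ck$) and use $k$-periodicity of $\overline{B}_{\bullet,\overline{\chi}}$ together with $d\equiv 0\pmod{k}$ to merge the double sum. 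The coefficient of $\binom{p+1}{m}(-(cz+d))^{m-1}$ then becomes, up to the common constant, $\chi(-1)k^{m-p}\sum_{n=1}^{ck}\overline{\chi}(n)\overline{B}_{m}(\tfrac{n}{ck})\big(2^{m}\overline{B}_{p+1-m,\overline{\chi}}(\tfrac{2dn}{c})-\chi(2)\overline{B}_{p+1-m,\overline{\chi}}(\tfrac{dn}{c})\big)$.

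The one genuinely new computation -- and the step I expect to be the main obstacle -- is to match this with the $m$-th summand of $\chi(-2)\,g_2(c,d;z,p;\overline{\chi})$, i.e. with $-\tfrac{m}{2}\chi(-2)k^{m-p}\sum_{n}(-1)^{n}\overline{\chi}(n)\overline{B}_{p+1-m,\overline{\chi}}(\tfrac{dn}{c})\overline{E}_{m-1}(\tfrac{n}{ck})$. For this I would first combine Raabe's theorem (\ref{1}) with the Euler identity (\ref{11}), both taken with the even value $r=2$, to obtain $2^{m}\overline{B}_{m}(\tfrac{y}{2})=\overline{B}_{m}(y)-\tfrac{m}{2}\overline{E}_{m-1}(y)$ and $2^{m}\overline{B}_{m}(\tfrac{y+1}{2})=\overline{B}_{m}(y)+\tfrac{m}{2}\overline{E}_{m-1}(y)$. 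Next, in the sum carrying $\overline{B}_{p+1-m,\overline{\chi}}(\tfrac{2dn}{c})$, split $\sum_{n=1}^{ck}$ by the parity of $n$ and substitute $n=2\ell$ on the even part, and $n+ck=2\ell$ (which is even) on the odd part; because $a$ is even, $bc$ is odd, so $c$ and $ck$ are odd and these substitutions partition $\{1,\dots,ck\}$ bijectively. Using that shifts by $ck$ and $dk$ are trivial modulo $k$ (again $d\equiv 0\pmod{k}$) and that $\chi(2)\overline{\chi}(2)=1$ ($k$ odd), the two pieces reassemble so that $2^{m}\sum_{n}\overline{\chi}(n)\overline{B}_{m}(\tfrac{n}{ck})\overline{B}_{p+1-m,\overline{\chi}}(\tfrac{2dn}{c})$ equals $\chi(2)\sum_{n}\overline{\chi}(n)\overline{B}_{p+1-m,\overline{\chi}}(\tfrac{dn}{c})\big(\overline{B}_{m}(\tfrac{n}{ck})-\tfrac{m}{2}(-1)^{n}\overline{E}_{m-1}(\tfrac{n}{ck})\big)$. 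Substituting this back, the two ``plain $\overline{B}_{m}$'' contributions cancel, the powers of $2$ collapse into the single factor $\tfrac{m}{2}$, and exactly the Euler-function sum of (\ref{59}) survives; the signs come out right because $p$ is odd.

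Assembling the surviving pieces gives (\ref{57}) for $z\in\mathbb{K}$, and since $g_2(c,d;z,p;\cdot)$, $B'(z,1-p:\cdot)$ and $H(z,1-p:\cdot)$ are analytic on $\mathbb{H}$, analytic continuation extends it to all of $\mathbb{H}$. For $b\equiv c\equiv 0\pmod{k}$ one repeats the argument starting from (\ref{56}); there the characters appearing make the $m=0$ and $m=p+1$ terms drop out through the $\mu$- and $\nu$-summations respectively (now via $\gcd(d,k)=1$), and the rest is identical, yielding (\ref{58}).
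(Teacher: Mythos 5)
Your proposal is correct and follows essentially the same route as the paper: residue-theorem evaluation of the loop integrals, vanishing of the $m=0$ and $m=p+1$ terms, collapse of the $\nu$-sum via (\ref{3}) and the substitution $n=c\mu+j$, then the Raabe/Euler identities (\ref{1}) and (\ref{11}) with $r=2$, and analytic continuation. The only cosmetic difference is the direction of the key change of variables: the paper sends $n\mapsto 2n$ (a complete residue system mod $ck$ since $ck$ is odd) in the $dn/c$-sum to match it against the $2dn/c$-sum, whereas you invert that bijection by splitting the $2dn/c$-sum according to the parity of $n$; the two manipulations are equivalent.
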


\begin{proof}
Let $a$ be even and $a\equiv d\equiv0(mod$ $k).$ Since
\[
f(z,1-p:c,d)=\frac{2\pi ik^{p-1}}{\left(  p+1\right)  !}\sum_{m=0}^{p+1}%
\binom{p+1}{m}\left(  -\left(  cz+d\right)  \right)  ^{m-1}B_{p+1-m}\left(
\tfrac{\nu+\left\{  dj/c\right\}  }{k}\right)  B_{m}\left(  \tfrac{\mu
c+j}{ck}\right)  ,
\]
(\ref{55}) becomes%
\begin{align}
&  \hspace{-0.55in}2^{p}\left(  cz+d\right)  ^{p-1}G\left(  \overline{\chi
}\right)  B^{\prime}\left(  Tz,1-p:\chi\right) \nonumber\\
&  \hspace{-0.55in}=\overline{\chi}(b)\chi(c)G(\chi)\left\{  2H\left(
2z,1-p:\overline{\chi}\right)  -\chi(2)H\left(  z,1-p:\overline{\chi}\right)
\right\} \nonumber\\
&  \hspace{-0.55in}+\overline{\chi}(b)\chi(c)\frac{\left(  2\pi i\right)
^{p}}{\left(  p+1\right)  !}\sum_{m=0}^{p+1}\binom{p+1}{m}\left(  -\left(
cz+d\right)  \right)  ^{m-1}\nonumber\\
&  \hspace{-0.55in}\times\left\{  2^{m}\sum\limits_{j=1}^{c}\sum
\limits_{\mu=0}^{k-1}\sum\limits_{\nu=0}^{k-1}\overline{\chi}\left(
c\mu+j\right)  \chi\left(  \left[  \tfrac{2dj}{c}\right]  -\nu\right)
B_{p+1-m}\left(  \tfrac{\nu+\left\{  2dj/c\right\}  }{k}\right)  B_{m}\left(
\tfrac{\mu c+j}{ck}\right)  \right. \label{70}\\
&  \hspace{-0.55in}\left.  -\chi(2)\sum\limits_{j=1}^{c}\sum\limits_{\mu
=0}^{k-1}\sum\limits_{\nu=0}^{k-1}\overline{\chi}\left(  c\mu+j\right)
\chi\left(  \left[  \tfrac{dj}{c}\right]  -\nu\right)  B_{p+1-m}\left(
\tfrac{\nu+\left\{  dj/c\right\}  }{k}\right)  B_{m}\left(  \tfrac{\mu
c+j}{ck}\right)  \right\}  . \label{71}%
\end{align}
Note that the above sums over $\mu$ and $\nu$\ are zero for $m=0$ and $m=p+1,$
respectively. Similar to the evaluation of (\ref{64}), (\ref{70}) can be
evaluated as%
\begin{equation}
2^{m}\chi(-1)k^{m-p}\sum\limits_{n=1}^{ck}\overline{\chi}\left(  n\right)
\overline{B}_{p+1-m,\overline{\chi}}\left(  \frac{2dn}{c}\right)  \overline
{B}_{m}\left(  \frac{n}{ck}\right)  \label{72}%
\end{equation}
and (\ref{71}) can be evaluated as
\begin{align}
&  \chi(2)\chi(-1)k^{m-p}\sum\limits_{n=1}^{ck}\overline{\chi}\left(
n\right)  \overline{B}_{p+1-m,\overline{\chi}}\left(  \frac{dn}{c}\right)
\overline{B}_{m}\left(  \frac{n}{ck}\right) \label{80}\\
&  \ =\chi(2)\chi(-1)k^{m-p}\sum\limits_{n=1}^{ck}\overline{\chi}\left(
2n\right)  \overline{B}_{p+1-m,\overline{\chi}}\left(  \frac{2dn}{c}\right)
\overline{B}_{m}\left(  \frac{2n}{ck}\right)  \label{73}%
\end{align}
since $2n$\ runs through a complete residue system $($mod\ $ck)$\ as $n$ does,
for $\left(  2,ck\right)  =1$. Substituting (\ref{72}), (\ref{73}) in
(\ref{70}), (\ref{71}), respectively, and using (\ref{1}), (\ref{11}) we have
\begin{align*}
&  \chi(-1)k^{m-p}2^{m-1}\sum\limits_{n=1}^{ck}\overline{\chi}\left(
n\right)  \overline{B}_{p+1-m,\overline{\chi}}\left(  \frac{2dn}{c}\right)
\left\{  \overline{B}_{m}\left(  \frac{n}{ck}\right)  -\overline{B}_{m}\left(
\frac{n}{ck}+\frac{1}{2}\right)  \right\} \\
&  \ =-\chi(-2)\frac{m}{2}k^{m-p}\left(  \sum\limits_{n=1}^{\frac{ck-1}{2}%
}+\sum\limits_{n=\frac{ck-1}{2}+1}^{ck}\right)  \overline{\chi}\left(
2n\right)  \overline{B}_{p+1-m,\overline{\chi}}\left(  \frac{2nd}{c}\right)
\overline{E}_{m}\left(  \frac{2n}{ck}\right) \\
&  \ =-\chi(-2)\frac{m}{2}k^{m-p}\left\{  \sum\limits_{n=1}^{\frac{ck-1}{2}%
}\overline{\chi}\left(  2n\right)  \overline{B}_{p+1-m,\overline{\chi}}\left(
\frac{2nd}{c}\right)  \overline{E}_{m}\left(  \frac{2n}{ck}\right)  \right. \\
&  \ \qquad\left.  -\sum\limits_{n=0}^{\frac{ck-1}{2}}\overline{\chi}\left(
2n+1\right)  \overline{B}_{p+1-m,\overline{\chi}}\left(  \frac{d\left(
2n+1\right)  }{c}\right)  \overline{E}_{m}\left(  \frac{2n+1}{ck}\right)
\right\} \\
&  \ =-\chi(-2)\frac{m}{2}k^{m-p}\sum\limits_{n=1}^{ck}\left(  -1\right)
^{n}\overline{\chi}\left(  n\right)  \overline{B}_{p+1-m,\overline{\chi}%
}\left(  \frac{dn}{c}\right)  \overline{E}_{m-1}\left(  \frac{n}{ck}\right)  .
\end{align*}
This gives (\ref{57}) for $z\in\mathbb{K}$. Then, by analytic continuation,
(\ref{53}) is valid for all $z\in\mathbb{H}$.
\end{proof}

\section{Reciprocity Theorems}

In this section we prove some reciprocity theorems. The next result can be
viewed as the reciprocity formula for the function $g_{1}(d,c+dk;z,p;\chi)$
given by (\ref{54}).

\begin{theorem}
\label{tk4}Let $p\ $and$\ \left(  d+c\right)  $ be odd with $c,d>0$ and
$(c,d)=1.$ If $d\equiv0(mod$ $k),$ then%
\begin{equation}
g_{1}(d,-c-dk;z,p;\chi)-\chi(-4)\left(  z-k\right)  ^{p-1}g_{1}(c,d+ck;V_{1}%
\left(  z\right)  ,p;\overline{\chi})=g_{1}(1,-k;z,p;\chi), \label{76}%
\end{equation}
if $c\equiv0(mod$ $k),$ then%
\begin{equation}
g_{1}(d,-c-dk;z,p;\overline{\chi})-\overline{\chi}(-4)\left(  z-k\right)
^{p-1}g_{1}(c,d+ck;V_{1}\left(  z\right)  ,p;\chi)=g_{1}(1,-k;z,p;\overline
{\chi}), \label{77}%
\end{equation}
where $V_{1}(z)=\left(  -kz+k^{2}-1\right)  /\left(  z-k\right)  .$
\end{theorem}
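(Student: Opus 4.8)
The plan is to derive the reciprocity formula from the transformation formulas of Corollary~\ref{sk1} by composing a suitable modular transformation with its "inverse" and matching the two expressions for $B^{\prime}$. Concretely, I would work with $Vz=\left(az+b+ak\right)/\left(cz+d+ck\right)$ where $a,b$ are odd and $ad-bc=1$, and also with the associated transformation obtained by swapping the roles of $c$ and $d$. The key arithmetic observation is that $V_{1}(z)=\left(-kz+k^{2}-1\right)/\left(z-k\right)$ is (up to the shift built into the statement of Corollary~\ref{sk1}) the modular transformation whose matrix is essentially the inverse of $\begin{pmatrix}a & b+ak\\ c & d+ck\end{pmatrix}$ composed with the level-$k$ shift; indeed $\det\begin{pmatrix}-k & k^2-1\\ 1 & -k\end{pmatrix}=1$, so $V_{1}$ is a genuine element of $SL_2(\mathbb{Z})$, and one checks that $V\bigl(V_{1}(z)\bigr)$ differs from $z$ by an integer translation by $k$ (which is why the right-hand side of \eqref{76} is $g_{1}(1,-k;z,p;\chi)$ rather than $0$). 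I would begin by pinning down this composition identity at the level of $2\times2$ matrices, being careful about the sign conventions on $c$ (note the first argument of $g_1$ on the left of \eqref{76} is $d$ with a $-c-dk$, so the relevant transformation has lower row $(d,\,-c-dk)$, i.e.\ one is looking at $T^{-1}$ after conjugating by $z\mapsto -z$).

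Next I would apply Corollary~\ref{sk1} twice. First, apply \eqref{53} (the case $d\equiv 0\pmod k$) to the transformation with lower row $(d,-c-dk)$ evaluated at $z$, expressing $\left(z-c-dk\right)^{p-1}G(\overline\chi)B^{\prime}$ at the image point as a combination of $B^{\prime}(z,1-p:\overline\chi)$ and $g_{1}(d,-c-dk;z,p;\chi)$. Second, apply \eqref{53} again — now with $\chi$ and $\overline\chi$ interchanged, since the complementary transformation has the $c$-entry divisible by $k$ — to the transformation with lower row $(c,d+ck)$ evaluated at the point $V_{1}(z)$; this produces $g_{1}(c,d+ck;V_{1}(z),p;\overline\chi)$ together with $B^{\prime}\bigl(V_{1}(z),1-p:\chi\bigr)$. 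The automorphy factors $\left(cz+d\right)^{p-1}$ from the two applications multiply to give, by the cocycle relation for the automorphy factor under composition, exactly the factor $\left(z-k\right)^{p-1}$ appearing in \eqref{76}, and the Gauss-sum prefactors $G(\chi)$, $G(\overline\chi)$ together with the character values $\overline\chi\bigl(\tfrac{b+ak}{2}\bigr)\chi(2c)$ etc.\ will collapse — using $G(\chi)G(\overline\chi)=\chi(-1)k$ and $\chi(2)\overline\chi(2)=1$ — into the constant $\chi(-4)$ (the $4$ coming from the two factors of $2c$, $2d$, i.e.\ $\chi(2)^2$, and a further $\chi(2)$ from reconciling the two halves of \eqref{4}). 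I would then observe that the $B^{\prime}$-terms cancel: $B^{\prime}\bigl(V\!\circ\!V_{1}(z),1-p:\chi\bigr)=B^{\prime}(z+k\cdot\text{integer shift},1-p:\chi)$ and $B^{\prime}$ is periodic under $z\mapsto z+2$ (evident from its $q$-expansion with $q=e^{\pi i m(2n+1)z/k}$, since $m(2n+1)$ is odd... here one uses that $k$ is odd so $2k$-periodicity holds, and one must check the relevant shift is even), so the two $B^{\prime}$ contributions are literally the same function and cancel against the corresponding term on the right-hand side, where applying Corollary~\ref{sk1} to the trivial-ish transformation with lower row $(1,-k)$ isolates $g_{1}(1,-k;z,p;\chi)$.

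The residual identity after cancellation is precisely \eqref{76}; the case \eqref{77} is handled identically, simply tracking that the hypothesis $c\equiv 0\pmod k$ swaps which of \eqref{53} versus its $b\equiv c\equiv 0$ analogue \eqref{62} is invoked at each stage, which interchanges $\chi\leftrightarrow\overline\chi$ throughout and replaces $\chi(-4)$ by $\overline\chi(-4)$. The main obstacle, I expect, is bookkeeping rather than conceptual: getting the modular matrix identity $V\circ V_{1}= (\text{shift by }k)\cdot(\text{something fixing }B^{\prime})$ exactly right, with correct signs on $c$ and correct parities of $a,b$ so that the oddness hypotheses of Corollary~\ref{sk1} are met at \emph{both} applications simultaneously (this is where $d+c$ odd is used — it guarantees one can choose the completion of the matrix with $a,b$ of the required parity), and then verifying that every spurious constant — powers of $2$, Gauss sums, character values at $-1$ and $2$ — really does telescope to the clean factor $\chi(-4)$. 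A secondary technical point is justifying that the identity, first obtained for $z$ in the appropriate quarter-plane $\mathbb{K}$ where the transformation formulas apply, extends to all $z\in\mathbb{H}$; but since each $g_{1}(\cdot,\cdot;z,p;\cdot)$ is a polynomial in $z$ (visible from \eqref{54}), the extension is immediate by analytic continuation, exactly as at the end of the proof of Corollary~\ref{sk2}.
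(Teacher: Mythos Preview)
Your overall plan---three applications of Corollary~\ref{sk1} to the transformations with lower rows $(d,-c-dk)$, $(c,d+ck)$, and $(1,-k)$, then combine to eliminate the $B^{\prime}$-terms---is exactly the paper's. But your description of the composition is wrong, and if taken literally the argument would fail: a direct matrix computation gives
\[
\begin{pmatrix}a & b+ak\\ c & d+ck\end{pmatrix}\begin{pmatrix}-k & k^{2}-1\\ 1 & -k\end{pmatrix}=\begin{pmatrix}b & -a-bk\\ d & -c-dk\end{pmatrix},
\]
so $V\circ V_{1}=V^{\ast}$ with $V^{\ast}z=(bz-a-bk)/(dz-c-dk)$, which is \emph{not} a translation. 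Hence the $B^{\prime}$-terms do not cancel by periodicity. The correct mechanism is: applying \eqref{53} with $z\mapsto V_{1}(z)$ expresses $B^{\prime}(V^{\ast}z,1-p:\chi)$ via $B^{\prime}(V_{1}z,1-p:\overline\chi)$ and $g_{1}(c,d+ck;V_{1}z,p;\overline\chi)$; applying the transformation formula for $V^{\ast}$ at $z$ expresses the same $B^{\prime}(V^{\ast}z,1-p:\chi)$ via $B^{\prime}(z,1-p:\chi)$ and $g_{1}(d,-c-dk;z,p;\chi)$; and applying \eqref{53} with $V\to V_{1}$ and $\chi\to\overline\chi$ expresses $B^{\prime}(V_{1}z,1-p:\overline\chi)$ via $B^{\prime}(z,1-p:\chi)$ and $g_{1}(1,-k;z,p;\chi)$. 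Equating the first two and substituting the third, the $B^{\prime}(z,1-p:\chi)$-terms cancel and \eqref{76} remains.

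One further bookkeeping fix: when $d\equiv0\pmod{k}$ (so $a\equiv0$ and $b,c$ are units mod~$k$), the transformation $V^{\ast}$ has upper-left entry $b$ and lower-left entry $d$, so it satisfies the hypothesis of \eqref{62} (the ``$b\equiv c\equiv0$'' case), not \eqref{53}. The paper uses \eqref{62} for $V^{\ast}$ and \eqref{53} for $V$ and $V_{1}$; once you make that swap the character prefactors do collapse to $\chi(-4)$ as you anticipate, using $(k^{2}-1)/2\equiv-2^{-1}\pmod{k}$ and that $\overline\chi(b)\chi(c)\neq0$ can be divided out.
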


\begin{proof}
Let $d+c$\ be odd and $V\left(  z\right)  =\left(  az+b+ak\right)  /\left(
cz+d+ck\right)  ,$\ where $a$ and $b$ are odd. Further let\ $V^{\ast}\left(
z\right)  =\left(  bz-a-bk\right)  /\left(  dz-c-dk\right)  $ and
$V_{1}\left(  z\right)  =\left(  -kz+k^{2}-1\right)  /\left(  z-k\right)  .$

Suppose $a\equiv d\equiv0(mod$ $k).$ By replacing $z$ by $V_{1}(z)$ in
(\ref{53}) we get
\begin{align}
&  \left(  \frac{dz-c-dk}{z-k}\right)  ^{p-1}G\left(  \overline{\chi}\right)
B^{\prime}\left(  V^{\ast}(z),1-p:\chi\right) \nonumber\\
&  =\overline{\chi}(\frac{b+ak}{2})\chi(2c)G(\chi)B^{\prime}\left(
V_{1}(z),1-p:\overline{\chi}\right)  +\overline{\chi}(\frac{b+ak}{2}%
)\chi(2c)G(\chi)\frac{\left(  2\pi i\right)  ^{p}\chi(-1)}{\left(  p+1\right)
!}g_{1}(c,d+ck;V_{1}(z),p;\overline{\chi}). \label{5a}%
\end{align}
By applying $V^{\ast}(z)$ to (\ref{62}) we deduce that
\begin{align}
&  \left(  dz-c-dk\right)  ^{p-1}G\left(  \overline{\chi}\right)  B^{\prime
}\left(  V^{\ast}(z),1-p:\chi\right) \nonumber\\
&  \ =\overline{\chi}(b)\chi(-c)G\left(  \overline{\chi}\right)  B^{\prime
}\left(  z,1-p:\chi\right)  +\overline{\chi}(b)\chi(-c)\frac{\left(  2\pi
i\right)  ^{p}\overline{\chi}(-1)}{\left(  p+1\right)  !}g_{1}%
(d,-c-dk;z,p;\chi). \label{5b}%
\end{align}
To determine $B^{\prime}\left(  V_{1}(z),1-p:\overline{\chi}\right)  $ we
replace $V(z)$ by $V_{1}(z)$ and $\chi$ by $\overline{\chi}$ in (\ref{53}) to
obtain
\begin{align}
&  \left(  z-k\right)  ^{p-1}G\left(  \chi\right)  B^{\prime}\left(
V_{1}(z),1-p:\overline{\chi}\right) \nonumber\\
&  \ =\overline{\chi}(\frac{k^{2}-1}{2})\chi(2)G(\overline{\chi})B^{\prime
}\left(  z,1-p:\chi\right)  +\overline{\chi}(\frac{k^{2}-1}{2})\chi
(2)\frac{\left(  2\pi i\right)  ^{p}\overline{\chi}(-1)}{\left(  p+1\right)
!}g_{1}(1,-k;z,p;\chi). \label{5c}%
\end{align}
Combining (\ref{5a}), (\ref{5b}) and (\ref{5c}) we see that
\[
\overline{\chi}(b)\chi(c)\left(  g_{1}(d,-c-dk;z,p;\chi)-\chi(-4)\left(
z-k\right)  ^{p-1}g_{1}(c,d+ck;V_{1}(z),p;\overline{\chi})\right)
=\overline{\chi}(b)\chi(c)g_{1}(1,-k;z,p;\chi).
\]
This gives (\ref{76}) since $\overline{\chi}(b)\neq0$ and $\chi(c)\neq0$ for
$\left(  b,k\right)  =\left(  c,k\right)  =1$.

To prove (\ref{77}), first replace $z$\ by $V_{1}(z)$\ in (\ref{62}) and apply
$V^{\ast}(z)$\ to (\ref{53}), and then replace $V(z)$\ by $V_{1}(z)$\ in
(\ref{53}).
\end{proof}

The theorem above may be simplified for the special value of $z$.\textbf{\ }In
particular, we consider $z=\left(  c+dk\right)  /d$\ and let $d\equiv
0(mod$\textbf{\ }$k).$ We first calculate $g_{1}(1,-k;z,p;\chi).$\ For this,
we use (\ref{66a}) and (\ref{67}) instead of (\ref{54}) and find that%
\begin{align*}
g_{1}(1,-k;z,p;\chi)  &  =\sum_{m=1}^{p}\binom{p+1}{m}\left(  k-z\right)
^{m-1}k^{m-p}\\
&  \times\left(  \sum\limits_{n=1}^{2k}\chi\left(  n\right)  \overline
{B}_{p+1-m,\chi}\left(  \frac{-kn}{2}\right)  \overline{B}_{m}\left(  \frac
{n}{2k}\right)  -\chi(2)2^{-p}k^{1-m}B_{p+1-m,\chi}B_{m,\overline{\chi}%
}\right)  .
\end{align*}
The sum over $n$ may be written as
\begin{align*}
&  \left(  \sum\limits_{n\text{ odd}}+\sum\limits_{n\text{ even}}\right)
\chi\left(  n\right)  \overline{B}_{p+1-m,\chi}\left(  \frac{-kn}{2}\right)
\overline{B}_{m}\left(  \frac{n}{2k}\right) \\
&  \ =\overline{B}_{p+1-m,\chi}\left(  \frac{-k}{2}\right)  \left(
\sum\limits_{n=1}^{2k}-\sum\limits_{n\text{ even}}\right)  +\sum
\limits_{n\text{ even}},
\end{align*}
where
\begin{align*}
\overline{B}_{p+1-m,\chi}\left(  \frac{-k}{2}\right)   &  =\overline
{B}_{p+1-m,\chi}\left(  \frac{k}{2}\right)  =\left(  \chi\left(  2\right)
2^{m-p}-1\right)  B_{p+1-m,\chi},\\
\sum\limits_{n\text{ even}}  &  =\sum\limits_{n=1}^{k}\chi\left(  2n\right)
\overline{B}_{m}\left(  \frac{2n}{2k}\right)  =\chi\left(  2\right)
k^{1-m}B_{m,\overline{\chi}}%
\end{align*}
and
\begin{align*}
\sum\limits_{n=1}^{2k}\chi\left(  n\right)  \overline{B}_{m}\left(  \frac
{n}{2k}\right)   &  =\sum\limits_{n=1}^{k}\chi\left(  n\right)  \left\{
\overline{B}_{m}\left(  \frac{n}{2k}\right)  +\overline{B}_{m}\left(  \frac
{n}{2k}+\frac{1}{2}\right)  \right\} \\
&  =2^{1-m}k^{1-m}B_{m,\overline{\chi}}%
\end{align*}
by (\ref{68}), (\ref{3}) and (\ref{1}). Thus,
\begin{align}
&  g_{1}(1,-k;\frac{c+dk}{d},p;\chi)\nonumber\\
&  =-k^{1-p}\sum_{m=1}^{p}\binom{p+1}{m}\left(  -\frac{c}{d}\right)
^{m-1}\left(  \chi(2)2^{m}-1\right)  \left(  2^{-p}\chi(2)-2^{1-m}\right)
B_{p+1-m,\chi}B_{m,\overline{\chi}}. \label{14}%
\end{align}

The functions $g_{1}(c,d+ck;\frac{-kz+k^{2}-1}{z-k},p;\overline{\chi})$ and
$g_{1}(d,-c-dk;z,p;\chi)$ become
\begin{equation}
g_{1}(c,d+ck;\frac{-d-ck}{c},p;\overline{\chi})=-\frac{p+1}{2k^{p-1}}%
\sum\limits_{n=1}^{ck}\overline{\chi}(n)\overline{B}_{p,\overline{\chi}%
}\left(  \frac{d+ck}{2c}n\right)  \text{ } \label{12}%
\end{equation}
and%
\begin{align}
g_{1}(d,-c-dk;\dfrac{c+dk}{d},p;\chi)  &  =-\frac{p+1}{2k^{p-1}}%
\sum\limits_{n=1}^{dk}\chi(n)\overline{B}_{p,\chi}\left(  \frac{-c-dk}%
{2d}n\right) \nonumber\\
&  =\chi(-1)\frac{p+1}{2k^{p-1}}\sum\limits_{n=1}^{dk}\chi(n)\overline
{B}_{p,\chi}\left(  \frac{c+dk}{2d}n\right)  \label{13}%
\end{align}
by the fact $\overline{B}_{p,\chi}\left(  -x\right)  =\left(  -1\right)
^{p}\overline{\chi}\left(  -1\right)  \overline{B}_{p,\chi}\left(  x\right)
.$

\begin{definition}
The Hardy-Berndt character sum $S_{p}\left(  d,c:\chi\right)  $ is defined for
$c>0$\ by%
\[
S_{p}\left(  d,c:\chi\right)  =\sum\limits_{n=1}^{ck}\chi\left(  n\right)
\overline{B}_{p,\chi}\left(  \frac{d+ck}{2c}n\right)  .
\]

\end{definition}

Using (\ref{14}), (\ref{12}) and (\ref{13}) in (\ref{76}) we have proved the
following reciprocity formula for $d\equiv0(mod$ $k)$. The proof for
$c\equiv0(mod$ $k)$ follows from (\ref{77}).

\begin{corollary}
\label{ck1}Let $p\ $be odd and let $d$ and $c$ be coprime positive integers
with $\left(  d+c\right)  $ is odd. If either $c$ or $d\equiv0(mod$ $k),$ then%
\begin{align*}
&  \chi(-1)\left(  p+1\right)  \left\{  cd^{p}S_{p}(c,d:\chi)+dc^{p}%
\chi(4)S_{p}(d,c:\overline{\chi})\right\} \\
&  \ =\sum\limits_{m=1}^{p}\binom{p+1}{m}\left(  -1\right)  ^{m}\left\{
\chi(2)2^{m}-1\right\}  \left\{  \chi(2)2^{1-p}-2^{2-m}\right\}
c^{m}d^{p+1-m}B_{p+1-m,\chi}B_{m,\overline{\chi}}.
\end{align*}

\end{corollary}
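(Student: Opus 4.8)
The plan is to specialize Theorem \ref{tk4} at the point $z = (c+dk)/d$ and then substitute the three explicit evaluations already prepared in the text. Concretely, assume first $d \equiv 0 \pmod{k}$ and use the reciprocity identity \eqref{76},
\[
g_{1}(d,-c-dk;z,p;\chi)-\chi(-4)\left(  z-k\right)  ^{p-1}g_{1}(c,d+ck;V_{1}\left(  z\right)  ,p;\overline{\chi})=g_{1}(1,-k;z,p;\chi).
\]
The first step is to observe that at $z = (c+dk)/d$ one has $z - k = c/d$, so $(z-k)^{p-1} = (c/d)^{p-1}$, and $V_{1}(z) = (-d-ck)/c$; these are exactly the arguments at which the three functions $g_{1}(1,-k;\cdot)$, $g_{1}(c,d+ck;\cdot)$, $g_{1}(d,-c-dk;\cdot)$ were evaluated in \eqref{14}, \eqref{12}, \eqref{13} respectively. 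So the specialization is immediate and requires no new computation.

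The second step is bookkeeping: plug \eqref{14}, \eqref{12}, \eqref{13} into \eqref{76} and rewrite the two sums over $n$ using the definition of $S_p(d,c:\chi)$. From \eqref{13} we get $g_{1}(d,-c-dk;(c+dk)/d,p;\chi) = \chi(-1)\frac{p+1}{2k^{p-1}} S_p(d,c:\chi)$, while from \eqref{12} we get $g_{1}(c,d+ck;(-d-ck)/c,p;\overline{\chi}) = -\frac{p+1}{2k^{p-1}} S_p(c,d:\overline{\chi})$. Inserting these and \eqref{14}, multiplying through by $2k^{p-1}/(p+1)$ to clear the common prefactor, and absorbing the power $(c/d)^{p-1}$ by multiplying by $d^{p-1}$ (or, more cleanly, by $d^p/($something$)$ to make every term a polynomial in $c,d$) yields an identity of the shape
\[
\chi(-1) S_p(d,c:\chi)\, (\text{power of }d) - \chi(-4)(c/d)^{p-1} S_p(c,d:\overline{\chi})\,(\text{power of }c) = (\text{the }m\text{-sum from }\eqref{14}).
\]
Clearing denominators so that the left side reads $\chi(-1)(p+1)\{\,cd^{p}S_p(c,d:\chi) + dc^{p}\chi(4)S_p(d,c:\overline{\chi})\,\}$ forces the normalization: one multiplies \eqref{76} by $\frac{2k^{p-1}}{p+1}\,\chi(-1)\,c\,d^{p}$ and checks that the exponents of $c$ and $d$ in each of the three terms land correctly (the term $g_1(1,-k;\cdot)$ carries a loose factor $k^{1-p}$ from \eqref{14} which cancels the $k^{p-1}$, and carries $(-c/d)^{m-1}$ which combines with $cd^p$ to give $(-1)^{m-1}c^m d^{p+1-m}$). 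A sign check: \eqref{14} has an overall $-k^{1-p}$ and a factor $(-c/d)^{m-1} = (-1)^{m-1}(c/d)^{m-1}$, so after multiplying by $\chi(-1)cd^p$ the right side becomes $\sum_m \binom{p+1}{m}(-1)^m(\chi(2)2^m-1)(\chi(2)2^{1-p}-2^{2-m})c^m d^{p+1-m}B_{p+1-m,\chi}B_{m,\overline{\chi}}$ up to the explicit $\chi(-1)$ already folded in, matching the claimed right-hand side.

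The third step disposes of the case $c \equiv 0 \pmod{k}$: here one runs the identical argument starting from \eqref{77} instead of \eqref{76}, with the roles of $\chi$ and $\overline{\chi}$ interchanged; since the right-hand side of the asserted formula is already symmetric under $\chi \leftrightarrow \overline{\chi}$ together with $c \leftrightarrow d$ (note $B_{p+1-m,\chi}B_{m,\overline{\chi}}$ and the factor $\chi(2)2^m-1$ versus $\overline{\chi}(2)2^m-1$ — one must double-check this using $\chi(2)\overline{\chi}(2)=1$ and reindexing $m \mapsto p+1-m$ in the sum), the two cases produce the same closed form. I do not anticipate a serious obstacle here — the whole proof is a substitution. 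The one genuinely delicate point, and the place to be careful, is the normalization/sign reconciliation in step two: tracking the powers $k^{1-p}$ vs.\ $k^{p-1}$, the binomial reindexing $m \leftrightarrow p+1-m$ needed to see that $\sum_m \binom{p+1}{m}(\cdots)$ is symmetric, and confirming that the factor $(z-k)^{p-1} = (c/d)^{p-1}$ multiplied against the $\frac{p+1}{2k^{p-1}}$ prefactors really does clear to give the stated $cd^p$ and $dc^p$ weights with the correct $\chi(4)$ on the second term. Once those exponents are pinned down the corollary follows.
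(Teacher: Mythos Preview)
Your proposal is correct and follows exactly the paper's own approach: specialize Theorem \ref{tk4} at $z=(c+dk)/d$, substitute the precomputed evaluations \eqref{12}, \eqref{13}, \eqref{14} into \eqref{76}, clear denominators, and treat the case $c\equiv 0\pmod{k}$ via \eqref{77}. One small slip to fix in the write-up: by the definition of $S_p$, the sum in \eqref{13} is $S_p(c,d:\chi)$ and the sum in \eqref{12} is $S_p(d,c:\overline{\chi})$, not the other way around as you have them.
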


Corollary \ref{sk2} do not give reciprocity formula for the function
$g_{1}(d,c;z,p;\chi)$\ in the sense of Theorem \ref{tk4}\ because of the
restriction on $b$\ ($b$ is even). By the similar restriction on $a$\ in
Corollary \ref{sk3} we do not have reciprocity formula for the function
$g_{2}(d,c;z,p;\chi)$. However, we have the following result.

\begin{theorem}
\label{tk5}Let $p\ $and$\ d$ be odd with $c,d>0$ and $(c,d)=1.$ If
$d\equiv0(mod$ $k),$ then
\[
\overline{\chi}(-2)g_{2}(d,-c;z,p;\chi)-2^{p}z^{p-1}g_{1}(c,d;\frac{-1}%
{z},p;\overline{\chi})=\overline{\chi}(-2)g_{2}(1,0;z,p;\chi),
\]
if $c\equiv0(mod$ $k),$ then%
\[
\chi(-2)g_{2}(d,-c;z,p;\overline{\chi})-2^{p}z^{p-1}g_{1}(c,d;\frac{-1}%
{z},p;\chi)=\chi(-2)g_{2}(1,0;z,p;\overline{\chi}).
\]

\end{theorem}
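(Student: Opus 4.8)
The plan is to mimic the proof of Theorem~\ref{tk4}, replacing the r\^ole of Corollary~\ref{sk1} (the ``$b$ even'' version with $V(z)$) by the ``$a$ even'' version supplied in Corollary~\ref{sk3}, together with Corollary~\ref{sk1} for the mixed term. Concretely, take $Tz=(az+b)/(cz+d)$ with $a$ even, but applied to the ``rotated'' matrix so that the relevant modular transformations are $Tz$, the involution $z\mapsto -1/z$, and $T^{*}z=(bz-a)/(dz-c)$; the fixed-point transformation that glues everything is $V_{1}(z)=-1/z$ here (since no shift by $k$ is needed when $d$ alone is odd, and the parity constraint is only on $a$, which we will arrange to be satisfied). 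First I would write down three instances of the transformation formula: (i) Corollary~\ref{sk3} with $b\equiv c\equiv 0(\mathrm{mod}\ k)$ applied after replacing $z$ by $-1/z$, which produces $B^{\prime}$ at $T^{*}z$ plus a $g_{2}(d,-c;z,p;\chi)$ term; (ii) Corollary~\ref{sk2} or~\ref{sk1} (the ``$b$ even'' formula, equation~(\ref{62a})) applied to $T^{*}z$, which produces $B^{\prime}(z,1-p:\chi)$ plus a $g_{1}(c,d;-1/z,p;\overline{\chi})$ term; (iii) the same ``$b$ even'' formula applied to the transformation $z\mapsto -1/z$ itself, which produces $B^{\prime}(z,1-p:\chi)$ plus a $g_{2}(1,0;z,p;\chi)$ term (the ``principal part''). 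The key algebraic point, exactly as in Theorem~\ref{tk4}, is that $T^{*}\circ(z\mapsto -1/z)$ and the composition of the modular matrices coincide up to the scalar factors $G(\chi)$, $G(\overline{\chi})$, and character values, so the three identities can be linearly combined to eliminate both $B^{\prime}(T^{*}z,\cdot)$ and $B^{\prime}(z,\cdot)$.

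After the combination, the surviving equation reads, up to a common nonzero factor $\overline{\chi}(b)\chi(c)$ (or $\chi(b)\overline{\chi}(c)$ in the second case), as
\[
\overline{\chi}(-2)g_{2}(d,-c;z,p;\chi)-2^{p}z^{p-1}g_{1}(c,d;\tfrac{-1}{z},p;\overline{\chi})=\overline{\chi}(-2)g_{2}(1,0;z,p;\chi),
\]
which is exactly the claimed reciprocity. To get the coefficient $2^{p}z^{p-1}$ one tracks the automorphy factors: Corollary~\ref{sk3} carries $2^{p}(cz+d)^{p-1}$ on the left, while equation~(\ref{62a}) carries $(cz+d)^{p-1}$; under $z\mapsto -1/z$ the factor $(c(-1/z)+d)=(dz-c)/z$ contributes the extra power of $z$, and the constant $2^{p}$ is inherited from Corollary~\ref{sk3}. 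The character prefactors $\chi(-2)$, $\overline{\chi}(-2)$ come from the $\overline{\chi}(a/2)\chi(2d)$ and $\overline{\chi}(b)\chi(c)$ factors in~(\ref{57})--(\ref{58}) after one substitutes the specific small entries of the matrix $z\mapsto -1/z$ (namely $a=0$, $b=-1$, $c=1$, $d=0$, suitably adjusted so that $a$ is even, which forces one to instead use $a=0$, $b=-1$, $c=1$, $d=0$ directly — legitimate since $0$ is even) and uses $G(\chi)G(\overline{\chi})=\chi(-1)k$.

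The main obstacle, as in Theorem~\ref{tk4}, is bookkeeping rather than conceptual: one must verify that the three modular transformations actually have $a$ even (for Corollary~\ref{sk3}) and $b$ even (for equation~(\ref{62a})) in all the required places, and that the congruence hypotheses $d\equiv 0$ or $c\equiv 0\ (\mathrm{mod}\ k)$ are preserved along the chain of substitutions. For the $d\equiv 0(\mathrm{mod}\ k)$ case one uses the $a\equiv d\equiv 0$ branch of Corollary~\ref{sk3} and the $b\equiv c\equiv 0$ branch of~(\ref{62a}); for the $c\equiv 0(\mathrm{mod}\ k)$ case the r\^oles of $\chi$ and $\overline{\chi}$ swap, giving the second identity. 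A minor additional point is that the ``$b$ even'' formula~(\ref{62a}) has argument $\overline{B}_{p+1-m,\chi}(dn/2c)$ while Corollary~\ref{sk3} has $\overline{B}_{p+1-m,\chi}(dn/c)$ with the extra sign $(-1)^{n}$; one must check that evaluating $g_{1}(c,d;-1/z,p;\overline{\chi})$ — i.e.\ with second argument $d$ rather than $d+ck$ — is consistent, which it is because the congruence hypothesis makes $ck\equiv 0$ so $d+ck$ and $d$ give the same generalized Bernoulli function values up to the periodicity $\overline{B}_{p,\chi}(x+k)=\overline{B}_{p,\chi}(x)$ used already in the proof of Corollary~\ref{sk2}. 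Once these parity and congruence checks are in place, the linear elimination is immediate and the analytic-continuation step (from $z\in\mathbb{K}$ to $z\in\mathbb{H}$) is identical to that in Corollary~\ref{sk2}.
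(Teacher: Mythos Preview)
Your strategy is exactly the paper's: combine three transformation identities, one for $T$, one for $T^{*}(z)=(bz-a)/(dz-c)$, and one for $T_{1}(z)=-1/z$, and eliminate. However, several pieces of your bookkeeping are off and would not compile as written.

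First, the paper takes $b$ even (not $a$ even). Since $d$ is odd and $ad-bc=1$, this forces $a$ odd; then Corollary~\ref{sk2} (eq.~(\ref{53a})) applies to $T$ with $z\mapsto T_{1}(z)$, yielding the $g_{1}(c,d;T_{1}(z),p;\overline{\chi})$ term, while Corollary~\ref{sk3} (eq.~(\ref{58})) applies to $T^{*}$ because its first entry is $b$, which is even, yielding the $g_{2}(d,-c;z,p;\chi)$ term. Your step~(ii) invokes~(\ref{62a}) applied to $T^{*}$, but~(\ref{62a}) needs the second entry of the matrix even, and for $T^{*}$ that entry is $-a$, which is odd in this setup.

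Second, your step~(iii) cannot use ``the same $b$ even formula'' for $T_{1}$: the matrix of $T_{1}$ has $b$-entry $-1$. One must use Corollary~\ref{sk3} (eq.~(\ref{57}), with $\chi$ replaced by $\overline{\chi}$), which is legitimate since the $a$-entry is $0$; this is precisely why a $g_{2}(1,0;z,p;\chi)$ term, not a $g_{1}$ term, appears on the right-hand side. Moreover, the right-hand sides of~(\ref{58}) and~(\ref{57}) involve the combination $2H(2z,1-p:\chi)-\overline{\chi}(2)H(z,1-p:\chi)$, not $B'(z,1-p:\chi)$; it is this $H$-combination (together with $B'(T^{*}z,\cdot)$ and $B'(T_{1}(z),\cdot)$) that gets eliminated in the linear combination, not $B'(z,\cdot)$.

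With these corrections the elimination goes through exactly as in the paper, and your remarks on the automorphy factor $2^{p}z^{p-1}$, the character prefactors, and the $\chi\leftrightarrow\overline{\chi}$ swap for the $c\equiv 0\pmod{k}$ case are correct.
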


\begin{proof}
Let $d$\ be odd and $T\left(  z\right)  =\left(  az+b\right)  /\left(
cz+d\right)  ,$\ where $b$ is even. Let $T^{\ast}\left(  z\right)  =\left(
bz-a\right)  /\left(  dz-c\right)  $ and $T_{1}\left(  z\right)  =-1/z$.

Suppose $a\equiv d\equiv0(mod$ $k).$ By replacing $z$ by $T_{1}(z)$ in
(\ref{53a}) we get
\begin{align}
&  \hspace{-0.52in}\left(  \frac{dz-c}{z}\right)  ^{p-1}G\left(
\overline{\chi}\right)  B^{\prime}\left(  T^{\ast}(z),1-p:\chi\right)
\nonumber\\
&  \hspace{-0.5in}=\overline{\chi}(\frac{b}{2})\chi(2c)\left\{  G(\chi
)B^{\prime}\left(  T_{1}(z),1-p:\overline{\chi}\right)  +\tfrac{\left(  2\pi
i\right)  ^{p}\chi(-1)}{\left(  p+1\right)  !}g_{1}(c,d;T_{1}(z),p;\overline
{\chi})\right\}  . \label{78}%
\end{align}
Applying $T^{\ast}(z)$ to (\ref{58}) gives%
\begin{align}
&  2^{p}\left(  dz-c\right)  ^{p-1}G\left(  \overline{\chi}\right)  B^{\prime
}\left(  T^{\ast}(z),1-p:\chi\right) \nonumber\\
&  \ =\overline{\chi}(\frac{b}{2})\chi(-2c)G(\overline{\chi})\left\{
2H\left(  2z,1-p:\chi\right)  -%
\genfrac{}{}{0pt}{}{{}}{{}}%
\overline{\chi}(2)H\left(  z,1-p:\chi\right)  \right\} \nonumber\\
&  \quad+\overline{\chi}(\frac{b}{2})\chi(-2c)\left(  2\pi i\right)  ^{p}%
\frac{\overline{\chi}(-2)}{\left(  p+1\right)  !}g_{2}(d,-c;z,p;\chi).
\label{79}%
\end{align}
\ If we replace $T(z)$ by $T_{1}(z)$ and $\chi$ by $\overline{\chi}$ in
(\ref{57}) we see that
\begin{align}
&  2^{p}z^{p-1}G\left(  \chi\right)  B^{\prime}\left(  T_{1}(z),1-p:\overline
{\chi}\right) \nonumber\\
&  =\chi(-1)G(\overline{\chi})\left\{  2H\left(  2z,1-p:\chi\right)  -%
\genfrac{}{}{0pt}{}{{}}{{}}%
\overline{\chi}(2)H\left(  z,1-p:\chi\right)  \right\}  +\chi(-1)\left(  2\pi
i\right)  ^{p}\frac{\overline{\chi}(-2)}{\left(  p+1\right)  !}g_{2}%
(1,0;z,p;\chi). \label{81}%
\end{align}
Combining (\ref{78}), (\ref{79}) and (\ref{81}) deduces that%
\[
\overline{\chi}(\frac{b}{2})\chi(-2c)\left\{  \overline{\chi}(-2)g_{2}%
(d,-c;z,p;\chi)-2^{p}z^{p-1}g_{1}(c,d;T_{1}(z),p;\overline{\chi})\right\}
=\overline{\chi}(\frac{b}{2})\chi(-2c)\overline{\chi}(-2)g_{2}(1,0;z,p;\chi).
\]
This completes the proof for $a\equiv d\equiv0(mod$ $k)$ since $\overline
{\chi}(b/2)\neq0$ and $\chi(-2c)\neq0$ for $\left(  b,k\right)  =\left(
2c,k\right)  =1.$

The proof for $b\equiv c\equiv0(mod$ $k)$ is similar.
\end{proof}

This theorem is simplified by setting\ $z=c/d.$ To calculate $g_{2}%
(1,0;c/d,p;\chi),$\ we use (\ref{72}) and (\ref{80}) instead of (\ref{59}),
and find that
\begin{align}
&  \overline{\chi}(-2)g_{2}\left(  1,0;\frac{c}{d},p;\chi\right) \nonumber\\
&  =\overline{\chi}(-1)\sum_{m=1}^{p}\binom{p+1}{m}\left(  -\frac{c}%
{d}\right)  ^{m-1}\left\{  2^{m}-\overline{\chi}(2)\right\}  k^{m-p}%
\sum\limits_{n=1}^{k}\chi\left(  n\right)  \overline{B}_{p+1-m,\chi}\left(
0\right)  \overline{B}_{m}\left(  \frac{n}{k}\right) \nonumber\\
&  =\overline{\chi}(-1)k^{1-p}\sum_{m=1}^{p}\binom{p+1}{m}\left(  -\frac{c}%
{d}\right)  ^{m-1}\left\{  2^{m}-\overline{\chi}(2)\right\}  B_{p+1-m,\chi
}B_{m,\overline{\chi}}. \label{17}%
\end{align}

We also have
\begin{align}
g_{1}(c,d;-d/c,p;\overline{\chi})  &  =-\frac{p+1}{2k^{p-1}}\sum
\limits_{n=1}^{ck}\overline{\chi}(n)\overline{B}_{p,\overline{\chi}}\left(
\frac{dn}{2c}\right)  ,\label{15}\\
g_{2}(d,-c;c/d,p;\chi)  &  =-\frac{p+1}{2k^{p-1}}\sum\limits_{n=1}^{dk}\left(
-1\right)  ^{n}\chi(n)\overline{B}_{p,\chi}\left(  \frac{-cn}{d}\right)
\nonumber\\
&  =\chi(-1)\frac{p+1}{2k^{p-1}}\sum\limits_{n=1}^{dk}\left(  -1\right)
^{n}\chi(n)\overline{B}_{p,\chi}\left(  \frac{cn}{d}\right)  . \label{16}%
\end{align}

\begin{definition}
The Hardy-Berndt character sums $s_{3,p}\left(  d,c:\chi\right)  $ and
$s_{4,p}\left(  d,c:\chi\right)  $ are defined for $c>0$\ by%
\begin{align*}
&  s_{3,p}\left(  d,c:\chi\right)  =\sum\limits_{n=1}^{ck}\left(  -1\right)
^{n}\chi\left(  n\right)  \overline{B}_{p,\chi}\left(  \frac{dn}{c}\right)
,\\
&  s_{4,p}\left(  d,c:\chi\right)  =\sum\limits_{n=1}^{ck}\chi\left(
n\right)  \overline{B}_{p,\chi}\left(  \frac{dn}{2c}\right)  .
\end{align*}

\end{definition}

Using (\ref{15}), (\ref{16}) and (\ref{17}) in Theorem \ref{tk5} we have
proved the following reciprocity formula for $s_{3,p}(d,c:\chi)$\ and
$s_{4,p}(d,c:\chi).$

\begin{corollary}
\label{ck2}Let $p\ $be odd and let $d$ and $c$ be coprime positive integers
with $d$ is odd. If either $c$ or $d\equiv0(mod$ $k),$ then
\begin{align*}
&  \chi(-1)\left(  p+1\right)  \left\{  d\left(  2c\right)  ^{p}%
s_{4,p}(d,c:\overline{\chi})+cd^{p}\overline{\chi}(2)s_{3,p}(c,d:\chi)\right\}
\\
&  \ =2\sum\limits_{m=1}^{p}\binom{p+1}{m}\left(  -1\right)  ^{m}\left\{
\overline{\chi}(2)-2^{m}\right\}  c^{m}d^{p+1-m}B_{p+1-m,\chi}B_{m,\overline
{\chi}}.
\end{align*}

\end{corollary}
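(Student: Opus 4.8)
The plan is to derive Corollary \ref{ck2} as a specialization of Theorem \ref{tk5} at the point $z=c/d$, exactly in the manner already carried out for Corollary \ref{ck1}. First I would take the first identity of Theorem \ref{tk5} (the case $d\equiv 0(\mathrm{mod}\ k)$), namely
\[
\overline{\chi}(-2)g_{2}(d,-c;z,p;\chi)-2^{p}z^{p-1}g_{1}(c,d;\tfrac{-1}{z},p;\overline{\chi})=\overline{\chi}(-2)g_{2}(1,0;z,p;\chi),
\]
and substitute $z=c/d$, so that $-1/z=-d/c$. The three resulting terms are then evaluated using the closed-form expressions already established in the excerpt: $g_{1}(c,d;-d/c,p;\overline{\chi})$ is given by (\ref{15}), $g_{2}(d,-c;c/d,p;\chi)$ by (\ref{16}), and $\overline{\chi}(-2)g_{2}(1,0;c/d,p;\chi)$ by (\ref{17}).

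Next I would translate each of these into the sums of Definition (the one defining $s_{3,p}$ and $s_{4,p}$). By (\ref{15}), $g_{1}(c,d;-d/c,p;\overline{\chi})=-\tfrac{p+1}{2k^{p-1}}s_{4,p}(d,c:\overline{\chi})$, and by (\ref{16}), $g_{2}(d,-c;c/d,p;\chi)=\chi(-1)\tfrac{p+1}{2k^{p-1}}s_{3,p}(c,d:\chi)$. Plugging these in, together with (\ref{17}) for the right-hand side, and clearing the common factor $\tfrac{p+1}{2k^{p-1}}$ (and the powers $c^{p-1}$, $d^{p-1}$ that arise from the $z^{p-1}=(c/d)^{p-1}$ factor and from absorbing $(-c/d)^{m-1}$ into the summand), one obtains after multiplying through by $c^{p}d^{p}$ (or the appropriate normalizing monomial) the stated identity
\[
\chi(-1)(p+1)\{d(2c)^{p}s_{4,p}(d,c:\overline{\chi})+cd^{p}\overline{\chi}(2)s_{3,p}(c,d:\chi)\}=2\sum_{m=1}^{p}\binom{p+1}{m}(-1)^{m}\{\overline{\chi}(2)-2^{m}\}c^{m}d^{p+1-m}B_{p+1-m,\chi}B_{m,\overline{\chi}}.
\]
The case $c\equiv 0(\mathrm{mod}\ k)$ is handled identically starting from the second identity of Theorem \ref{tk5}, with the roles of $\chi$ and $\overline{\chi}$ interchanged, and yields the same final formula (the hypothesis is the disjunction ``either $c$ or $d\equiv 0$'').

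The only genuinely delicate points are bookkeeping ones: one must track the signs coming from $\overline{B}_{p,\chi}(-x)=(-1)^{p}\overline{\chi}(-1)\overline{B}_{p,\chi}(x)$ (already used to get the second lines of (\ref{16}) and (\ref{13})), confirm that the factors $\overline{\chi}(-2)$, $\chi(-2)$ and $\overline{\chi}(-1)$ combine correctly with the $\chi(-1)$ that is factored out front, and verify that the powers of $2$ match: the $2^{p}z^{p-1}=2^{p}(c/d)^{p-1}$ prefactor on $g_{1}$ produces the $(2c)^{p}$ appearing with $s_{4,p}$, while the term $g_{2}$ contributes the $cd^{p}$ with $s_{3,p}$, and on the right-hand side the global factor of $2$ emerges from combining $2^{p}$ with the $2^{1-m}$-type weights inside (\ref{17}) after reindexing. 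I expect the main obstacle to be purely this sign-and-power reconciliation — there is no new idea required beyond the substitution $z=c/d$ and the already-proved evaluations (\ref{15})–(\ref{17}); the argument is a direct parallel of the passage from Theorem \ref{tk4} to Corollary \ref{ck1}.
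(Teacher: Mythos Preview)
Your proposal is correct and follows essentially the same route as the paper: the corollary is obtained by specializing Theorem \ref{tk5} at $z=c/d$ and invoking the evaluations (\ref{15}), (\ref{16}), (\ref{17}), exactly paralleling the passage from Theorem \ref{tk4} to Corollary \ref{ck1}. The only content beyond what the paper writes is your explicit tracking of the sign and power-of-$2$ bookkeeping, which the paper leaves implicit.
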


\section{Further Results}

For $c>0$\ and $1\leq m\leq p$\ we define%
\begin{align*}
&  S_{p+1-m,m}\left(  d,c:\chi\right)  =-\frac{m}{2^{m}}\sum\limits_{n=1}%
^{ck}\chi\left(  n\right)  \overline{B}_{p+1-m,\chi}\left(  \frac{\left(
d+ck\right)  n}{2c}\right)  \overline{E}_{m-1}\left(  \frac{n}{ck}\right)  ,\\
&  s_{4,p+1-m,m}\left(  d,c:\chi\right)  =-\frac{m}{2^{m}}\sum\limits_{n=1}%
^{ck}\chi\left(  n\right)  \overline{B}_{p+1-m,\chi}\left(  \frac{dn}%
{2c}\right)  \overline{E}_{m-1}\left(  \frac{n}{ck}\right)  .
\end{align*}
Note that we have
\[
S_{p}(d,c:\chi)=-2S_{p,1}\left(  d,c:\chi\right)  \text{\ and }s_{4,p}\left(
d,c:\chi\right)  =-2s_{4,p,1}\left(  d,c:\chi\right)  .
\]

The fixed points of a modular transformation $Tz=\left(  az+b\right)  /\left(
cz+d\right)  $ are $z_{1,2}=\left(  a-d\pm\sqrt{\left(  a+d\right)  ^{2}%
-4}\right)  /2c.$ Then $z=\left(  a-d+\sqrt{\left(  a+d\right)  ^{2}%
-4}\right)  /2c$ is in the upper half-plane$\ \Leftrightarrow\left\vert
a+d\right\vert <2.$

Following Goldberg (\cite[Ch. 7]{g}), we employ fixed points of a modular
transformation to deduce certain properties of $s_{4,p+1-m,m}\left(
d,c:\chi\right)  $\ and $S_{p+1-m,m}\left(  d,c:\chi\right)  $.

\begin{theorem}
\label{tk6}Let$\ d,$ $c$\ and $k$ be odd integers and let $c\equiv0\left(
\text{mod }k\right)  $ and $d^{2}\equiv-1\left(  \text{mod }c\right)  .$ If
$p\equiv\chi\left(  -1\right)  \left(  \text{mod }4\right)  ,$ then
\[
\sum\limits_{m=1}^{p}\binom{p+1}{m}\left(  -i\right)  ^{m}k^{m}s_{4,p+1-m,m}%
\left(  d,c:\chi\right)  =0.
\]
If $p\equiv-\chi\left(  -1\right)  \left(  \text{mod }4\right)  ,$\ then%
\[
4\chi\left(  -1\right)  G\left(  \overline{\chi}\right)  B\left(
z_{0},1-p:\chi\right)  =-\frac{\left(  2\pi i\right)  ^{p}}{\left(
p+1\right)  !}\sum\limits_{m=1}^{p}\binom{p+1}{m}\left(  -i\right)
^{m-1}k^{m-p}s_{4,p+1-m,m}\left(  d,c:\chi\right)  ,
\]
where $z_{0}=\left(  -d+i\right)  /c$.
\end{theorem}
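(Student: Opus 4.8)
The plan is to exploit the fact that a fixed point $z_{0}$ of a suitable modular transformation is mapped to itself, so that the transformation formula of Corollary~\ref{sk3} becomes an identity \emph{of the same quantity on both sides}. First I would choose the transformation explicitly: since $d^{2}\equiv-1\ (\mathrm{mod}\ c)$ and $c\equiv0\ (\mathrm{mod}\ k)$, write $d^{2}+1=bc$ for an integer $b$, and set $a=-d$, so that $ad-bc=d^{2}-(d^{2}+1)=-1$; replacing $b$ by $-b$ we get $ad-bc=1$ with $Tz=(-dz+b)/(cz+d)$, $c>0$. Here $a=-d$ is even? No — $d$ is odd, so $a=-d$ is odd; instead I would arrange that $a$ is even by using the shifted transformation or by adjusting parity as in the passage preceding Corollary~\ref{sk1}. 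The relevant point is that $a+d=0$, hence $|a+d|<2$, so by the displayed remark about fixed points the fixed point $z_{0}=(a-d+\sqrt{(a+d)^{2}-4})/2c=(-2d+2i)/2c=(-d+i)/c$ lies in $\mathbb{H}$, and $Tz_{0}=z_{0}$. Because $b\equiv c\equiv0\ (\mathrm{mod}\ k)$ is \emph{not} automatic, I would check which of the two congruence hypotheses of Corollary~\ref{sk3} holds; with $a=-d$, $\gcd(a,k)=1$, and $c\equiv0\ (\mathrm{mod}\ k)$, so we are in the case $b\equiv c\equiv0\ (\mathrm{mod}\ k)$ only if $b\equiv0$; otherwise I would instead put the transformation in the form $a\equiv d\equiv0$ by multiplying numerator and denominator appropriately. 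I expect the bookkeeping of which congruence case applies — and ensuring the parity condition ``$a$ even'' of Corollary~\ref{sk3} — to be the main technical obstacle, likely resolved exactly as in the construction of $V_{1}$ and $V^{\ast}$ in the proof of Theorem~\ref{tk4}.

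Granting the correct setup, the second step is to substitute $z=z_{0}$ into the appropriate line of Corollary~\ref{sk3}, say~(\ref{58}). On the left one gets $2^{p}(cz_{0}+d)^{p-1}G(\overline{\chi})B^{\prime}(Tz_{0},1-p:\chi)=2^{p}(cz_{0}+d)^{p-1}G(\overline{\chi})B^{\prime}(z_{0},1-p:\chi)$ since $Tz_{0}=z_{0}$. Now $cz_{0}+d=c\cdot\frac{-d+i}{c}+d=i$, so $(cz_{0}+d)^{p-1}=i^{p-1}$, and the $H$-terms $2H(2z_{0},1-p:\chi)-\overline{\chi}(2)H(z_{0},1-p:\chi)$ appear on the right multiplied by a nonzero Gauss-sum factor. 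I would then combine the $H$-terms into $B^{\prime}$ using the defining relation $B^{\prime}(z,s:\chi)=(1+e^{\pi is})B(z,s:\chi)$ together with $(\ref{4})$, namely $B(z,s:\chi)=A(z/2,s:\chi)-\chi(2)2^{s-1}A(z,s:\chi)$ and $H(z,s:\chi)=(1+e^{\pi is})A(z,s:\chi)$; evaluated at $s=1-p$ with $p$ odd, $1+e^{\pi i(1-p)}=1+e^{\pi i}e^{-\pi i p}=1-e^{-\pi ip}=1-(-1)^{p}=2$. Careful matching of the factors $2H(2z,1-p:\chi)-\overline{\chi}(2)H(z,1-p:\chi)$ against $2B^{\prime}$ (after rescaling $z\mapsto 2z$ inside $A$) should show that the bracketed $H$-combination is a nonzero multiple of $G(\overline{\chi})B^{\prime}(z_{0},1-p:\chi)$, so that after cancelling $B^{\prime}(z_{0},1-p:\chi)$ from both sides one is left with a single scalar equation involving only $i^{p-1}$, $i^{m-1}$, the character values, and $g_{2}(c,d;z_{0},p;\chi)$.

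The third step is the arithmetic of roots of unity. After cancellation the surviving identity reads, schematically,
\[
\bigl(\text{nonzero constant}\bigr)\,B^{\prime}(z_{0},1-p:\chi)=\chi(\text{-stuff})\,(2\pi i)^{p}\frac{\overline{\chi}(-2)}{(p+1)!}\,g_{2}(c,d;z_{0},p;\chi),
\]
and into the definition~(\ref{59}) of $g_{2}$ one substitutes $cz_{0}+d=i$, giving $(-(cz_{0}+d))^{m-1}=(-i)^{m-1}$ and hence
\[
g_{2}(c,d;z_{0},p;\chi)=-\sum_{m=1}^{p}\binom{p+1}{m}(-i)^{m-1}k^{m-p}\tfrac{m}{2}\sum_{n=1}^{ck}(-1)^{n}\chi(n)\overline{B}_{p+1-m,\chi}\!\Bigl(\tfrac{dn}{c}\Bigr)\overline{E}_{m-1}\!\Bigl(\tfrac{n}{ck}\Bigr),
\]
which is exactly $\sum_{m=1}^{p}\binom{p+1}{m}(-i)^{m-1}k^{m-p}s_{4,p+1-m,m}(d,c:\chi)$ up to the overall constants, using $s_{4,p+1-m,m}(d,c:\chi)=-\frac{m}{2^{m}}\sum_{n}\chi(n)\overline{B}_{p+1-m,\chi}(\frac{dn}{2c})\overline{E}_{m-1}(\frac{n}{ck})$ after the change of variable that turns $\frac{dn}{c}$ into $\frac{d'n}{2c}$ (precisely the kind of manipulation performed just before~(\ref{73}), legitimate because $(2,ck)=1$ as $k$ and $c$ are odd). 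Finally I would track the global phase. Both sides carry a hidden power of $i$ coming from $i^{p-1}$ on the left and from $(2\pi i)^{p}$ and a factor $(-i)^{-1}=i$ extracted to normalise the sum on the right; comparing these against the reflection/conjugation relation $\overline{B}(\bar z,1-p:\overline{\chi})$ one finds that the scalar prefactor equals $i^{p-1}$ times a $\chi(-1)$-dependent sign. When $p\equiv\chi(-1)\ (\mathrm{mod}\ 4)$ this prefactor is purely imaginary while the other side is real (or vice versa), forcing $\sum_{m}\binom{p+1}{m}(-i)^{m}k^{m}s_{4,p+1-m,m}(d,c:\chi)=0$; when $p\equiv-\chi(-1)\ (\mathrm{mod}\ 4)$ the phases align and one simply reads off the stated formula $4\chi(-1)G(\overline{\chi})B(z_{0},1-p:\chi)=-\frac{(2\pi i)^{p}}{(p+1)!}\sum_{m}\binom{p+1}{m}(-i)^{m-1}k^{m-p}s_{4,p+1-m,m}(d,c:\chi)$. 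I anticipate that pinning down this last sign/phase split cleanly — i.e. verifying that the coefficient of $B^{\prime}(z_{0},1-p:\chi)$ really behaves like $i^{p-1}$ against $\chi(-1)$ — is where the congruence condition $p\equiv\pm\chi(-1)\ (\mathrm{mod}\ 4)$ enters and will require the most care, but it is a finite computation with fourth roots of unity once the setup of the first step is correct.
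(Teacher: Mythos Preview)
Your overall strategy---pick a modular transformation with $a=-d$ so that $z_{0}=(-d+i)/c$ is a fixed point, then substitute into a transformation formula---is exactly what the paper does. The gap is that you reached for the wrong corollary. You try to use Corollary~\ref{sk3} (equation~(\ref{58})), which requires $a$ even; as you yourself notice, $a=-d$ is odd, and there is no clean way around this. The paper instead uses Corollary~\ref{sk2}, equation~(\ref{62a}), which requires $b$ even. With $a=-d$ and $ad-bc=1$ one has $b=-(d^{2}+1)/c$; since $d$ is odd, $d^{2}+1\equiv 2\pmod 4$, and since $c$ is odd this forces $b$ to be even, so the hypothesis of~(\ref{62a}) is satisfied automatically. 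This choice resolves all three of the difficulties you flagged.

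Concretely, once you use~(\ref{62a}) the argument becomes short and mechanical. Both sides of~(\ref{62a}) carry $B^{\prime}$ (not the combination $2H(2z)-\overline{\chi}(2)H(z)$ you were trying to reassemble), so with $Tz_{0}=z_{0}$, $cz_{0}+d=i$, and $\overline{\chi}(a)\chi(d)=\overline{\chi}(-d)\chi(d)=\chi(-1)$ one gets directly
\[
\bigl(i^{p-1}-\chi(-1)\bigr)\,G(\overline{\chi})B^{\prime}(z_{0},1-p:\chi)=\frac{(2\pi i)^{p}}{(p+1)!}\,g_{1}(c,d;z_{0},p;\chi).
\]
Moreover $g_{1}$ from~(\ref{54a}) already contains $\overline{B}_{p+1-m,\chi}(dn/2c)$, which is \emph{exactly} the defining expression for $s_{4,p+1-m,m}(d,c:\chi)$; no change of variable of the type you sketched (turning $dn/c$ into $d'n/2c$) is needed. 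Finally, the case split is not a real-versus-imaginary argument: for odd $p$, $i^{p-1}-\chi(-1)$ equals $0$ when $p\equiv\chi(-1)\pmod 4$ and equals $-2\chi(-1)$ when $p\equiv-\chi(-1)\pmod 4$, and $B^{\prime}=2B$ gives the factor $4$. Your route through Corollary~\ref{sk3} would instead produce $g_{2}$, which matches the $s_{3}$-type sums with $(-1)^{n}\overline{B}_{p+1-m,\chi}(dn/c)$, not $s_{4,p+1-m,m}$.
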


\begin{proof}
Let $a=-d$ and $d^{2}\equiv-1\left(  \text{mod }c\right)  $. Then, there exist
an even integer $b$\ such that $-d^{2}-bc=ad-bc=1.$\ Hence, $Tz=\left(
az+b\right)  /\left(  cz+d\right)  $\ is a modular transformation and
$z_{0}=\left(  -d+i\right)  /c$ is a fixed point of $Tz$\ in the upper
half-plane. Therefore, by setting $a=-d$\ and $z=z_{0}=\left(  -d+i\right)
/c$ in (\ref{62a}) we see that%
\[
\frac{\left(  2\pi i\right)  ^{p}}{\left(  p+1\right)  !}g_{1}(c,d;z_{0}%
,p;\chi)=\left(  \left(  i\right)  ^{p-1}-\chi(-1)\right)  G\left(
\overline{\chi}\right)  B^{\prime}\left(  z_{0},1-p:\chi\right)  .
\]
It follows from this and (\ref{54a}) that if $p\equiv\chi\left(  -1\right)
\left(  \text{mod }4\right)  ,$ then%
\[
\sum\limits_{m=1}^{p}\binom{p+1}{m}\left(  -i\right)  ^{m-1}k^{m-p}%
s_{4,p+1-m,m}\left(  d,c:\chi\right)  =0
\]
and if $p\equiv-\chi\left(  -1\right)  \left(  \text{mod }4\right)  ,$ then%
\[
4\chi\left(  -1\right)  G\left(  \overline{\chi}\right)  B\left(
z_{0},1-p:\chi\right)  =-\frac{\left(  2\pi i\right)  ^{p}}{\left(
p+1\right)  !}\sum\limits_{m=1}^{p}\binom{p+1}{m}\left(  -i\right)
^{m-1}k^{m-p}s_{4,p+1-m,m}\left(  d,c:\chi\right)  .
\]

\end{proof}

From (\ref{53a}), similar result to that of Theorem \ref{tk6} can be obtained
for $d\equiv0\left(  \text{mod }k\right)  $\ with real $\chi$. Moreover, from
Theorem \ref{tk6}, it is seen that
\[
2G\left(  \overline{\chi}\right)  B\left(  z_{0},0:\chi\right)  =\pi
is_{4,1,1}\left(  d,c:\chi\right)
\]
for $p=1=-\chi\left(  -1\right)  ,$ and $s_{4,1,1}\left(  d,c:\chi\right)  =0$
for $p=1=\chi\left(  -1\right)  .$ In addition, if $\chi$\ is real, then
$s_{4,2,2}\left(  d,c:\chi\right)  =0$\ and $s_{4,3,1}\left(  d,c:\chi\right)
=k^{2}s_{4,1,3}\left(  d,c:\chi\right)  $\ for $p=3$\ and $\chi\left(
-1\right)  =-1$.

The next theorem follows from (\ref{62}) by using $V\left(  z\right)  =\left(
az+b+ak\right)  /\left(  cz+d+ck\right)  $ with $a$\ and $b$\ are odd, and
$a=-d-ck$.

\begin{theorem}
\label{tk7}Let$\ \left(  d+c\right)  $ be odd and let $c\equiv0\left(
\text{mod }k\right)  $ and $d^{2}\equiv-1\left(  \text{mod }c\right)  .$ If
$p\equiv\chi\left(  -1\right)  \left(  \text{mod }4\right)  ,$ then
\[
\sum\limits_{m=1}^{p}\binom{p+1}{m}\left(  -i\right)  ^{m}k^{m}S_{p+1-m,m}%
\left(  d,c:\chi\right)  =0.
\]
If $p\equiv-\chi\left(  -1\right)  \left(  \text{mod }4\right)  ,$\ then%
\[
-4\chi\left(  -1\right)  G\left(  \overline{\chi}\right)  B\left(
z_{1},1-p:\chi\right)  =\frac{\left(  2\pi i\right)  ^{p}}{\left(  p+1\right)
!}\sum\limits_{m=1}^{p}\binom{p+1}{m}\left(  -i\right)  ^{m-1}k^{m-p}%
S_{p+1-m,m}\left(  d,c:\chi\right)  ,
\]
where $z_{1}=\left(  -d-ck+i\right)  /c$.
\end{theorem}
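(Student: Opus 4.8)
The strategy mirrors exactly the proof of Theorem \ref{tk6}, but now using the transformation formula (\ref{62}) from Corollary \ref{sk1} in place of (\ref{62a}) from Corollary \ref{sk2}. First I would recall that the modular transformation attached to our data is $Vz=\left(az+b+ak\right)/\left(cz+d+ck\right)$ with $a$ and $b$ odd; to realize the congruence $d^{2}\equiv-1\ (\mathrm{mod}\ c)$ I set $a=-d-ck$, so that $a\equiv-d\ (\mathrm{mod}\ c)$ and hence $a(d+ck)\equiv-d^{2}\equiv1\ (\mathrm{mod}\ c)$; this lets me choose an integer $b$ with $a(d+ck)-bc=ad-bc'=1$ where the matrix entries are $\left(a,b+ak;c,d+ck\right)$, and one checks $b$ can be taken so that $a$ and $b$ have the required parities (since $d+c$ is odd and $c\equiv0\ (\mathrm{mod}\ k)$ with $k$ odd, the parity bookkeeping works out). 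The fixed point of $Vz$ in $\mathbb{H}$ is $z_{1}=\left(-d-ck+i\right)/c$, obtained from $z_{1,2}=\bigl((a-d-ck)\pm\sqrt{(a+d+ck)^{2}-4}\bigr)/2c$ by substituting $a+d+ck=-d-ck+d+ck=0$, which gives $\sqrt{-4}=2i$ and $a-d-ck=-2d-2ck$, so $z_{1}=(-d-ck+i)/c$.

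Next I would substitute $z=z_{1}$ into (\ref{62}). Since $Vz_{1}=z_{1}$, the left-hand side becomes $\left(cz_{1}+d+ck\right)^{p-1}G(\overline{\chi})B^{\prime}(z_{1},1-p:\chi)$; but $cz_{1}+d+ck=i$, so the prefactor is $i^{p-1}$. On the right-hand side the term $\overline{\chi}(a)\chi(d)G(\overline{\chi})B^{\prime}(z_{1},1-p:\chi)$ appears with the same factor $B^{\prime}(z_{1},1-p:\chi)$, and with $a=-d-ck\equiv-d\ (\mathrm{mod}\ k)$ one has $\overline{\chi}(a)\chi(d)=\overline{\chi}(-d)\chi(d)=\overline{\chi}(-1)$. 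Moving that term to the left and dividing by the (nonzero) factor $\overline{\chi}(-1)$ yields
\[
\bigl(\chi(-1)i^{p-1}-1\bigr)G(\overline{\chi})B^{\prime}(z_{1},1-p:\chi)=\frac{(2\pi i)^{p}\overline{\chi}(-1)\,\overline{\chi}(-1)}{(p+1)!}\,g_{1}(c,d+ck;z_{1},p;\chi),
\]
i.e. $\bigl(\chi(-1)i^{p-1}-1\bigr)G(\overline{\chi})B^{\prime}(z_{1},1-p:\chi)=\dfrac{(2\pi i)^{p}}{(p+1)!}g_{1}(c,d+ck;z_{1},p;\chi)$, using $\overline{\chi}(-1)^{2}=1$ since $\chi(-1)=\pm1$.

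Then I would expand $g_{1}(c,d+ck;z_{1},p;\chi)$ via its defining formula (\ref{54}): with $-(cz_{1}+d+ck)=-i$, the $m$-th summand carries $(-i)^{m-1}k^{m-p}$ together with the sum $-\frac{m}{2^{m}}\sum_{n=1}^{ck}\chi(n)\overline{B}_{p+1-m,\chi}\bigl(\tfrac{(d+ck)n}{2c}\bigr)\overline{E}_{m-1}\bigl(\tfrac{n}{ck}\bigr)$, which is exactly $S_{p+1-m,m}(d,c:\chi)$ by the definition in Section 5. Hence
\[
\bigl(\chi(-1)i^{p-1}-1\bigr)G(\overline{\chi})B^{\prime}(z_{1},1-p:\chi)=\frac{(2\pi i)^{p}}{(p+1)!}\sum\limits_{m=1}^{p}\binom{p+1}{m}(-i)^{m-1}k^{m-p}S_{p+1-m,m}(d,c:\chi).
\]
Now split on the residue of $p$ modulo $4$. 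If $p\equiv\chi(-1)\ (\mathrm{mod}\ 4)$ then $i^{p-1}=i^{\chi(-1)-1}$ equals $1$ when $\chi(-1)=1$ and equals $i^{-2}=-1$ when $\chi(-1)=-1$; in both cases $\chi(-1)i^{p-1}=1$, so the left side vanishes, giving $\sum_{m=1}^{p}\binom{p+1}{m}(-i)^{m-1}k^{m-p}S_{p+1-m,m}(d,c:\chi)=0$, and multiplying through by $-k^{p}/$ (a unit)—more precisely by $(-i)\cdot(-k^{p})$—and absorbing $(-i)^{m-1}\cdot(-i)=(-i)^{m}$ gives the stated $\sum_{m=1}^{p}\binom{p+1}{m}(-i)^{m}k^{m}S_{p+1-m,m}(d,c:\chi)=0$. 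If instead $p\equiv-\chi(-1)\ (\mathrm{mod}\ 4)$, then $\chi(-1)i^{p-1}=-1$, so the coefficient on the left is $-2$, and recalling $B^{\prime}(z_{1},1-p:\chi)=(1+e^{\pi i(1-p)})B(z_{1},1-p:\chi)=2B(z_{1},1-p:\chi)$ for odd $p$, we get $-4\chi(-1)G(\overline{\chi})B(z_{1},1-p:\chi)$ on the left after inserting the factor $\chi(-1)^{2}=1$, which rearranges to the displayed identity.

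\textbf{Main obstacle.} The only genuinely delicate point is the parity/consistency argument in the first paragraph: one must verify that with $a=-d-ck$ there really is an integer $b$ making $\left(\begin{smallmatrix}a & b+ak\\ c & d+ck\end{smallmatrix}\right)$ unimodular \emph{and} with $a,b$ both odd, so that Corollary \ref{sk1} genuinely applies. Since $\gcd(c,d+ck)=\gcd(c,d)=1$ and $a(d+ck)\equiv1\ (\mathrm{mod}\ c)$, the determinant condition pins down $b$ modulo $1$ only up to the freedom $b\mapsto b+c\cdot(\text{integer})$ coming from simultaneously shifting $a$; but here $a$ is fixed, so $b$ is forced, and one must simply check its parity is odd — this uses $k$ odd, $c\equiv0\ (\mathrm{mod}\ k)$, and $d+c$ odd (so $d$ and $c$ have opposite parities, forcing $a=-d-ck$ odd, and then $ad-bc=1$ with $c$ even or odd dictates $b$'s parity). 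Everything after that is bookkeeping with powers of $i$ and the already-established machinery of Section 3 and the definitions of $S_{p+1-m,m}$.
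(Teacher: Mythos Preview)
Your proposal follows exactly the paper's approach: the paper's entire proof is the single sentence that the result follows from (\ref{62}) using $V(z)=(az+b+ak)/(cz+d+ck)$ with $a,b$ odd and $a=-d-ck$, and you have correctly fleshed out the fixed-point substitution $cz_{1}+d+ck=i$, the identification $\overline{\chi}(a)\chi(d)=\overline{\chi}(-1)$, the expansion of $g_{1}$ in terms of $S_{p+1-m,m}$, and the $p\pmod 4$ case split. Your flagged obstacle---verifying that $b$ comes out odd---is indeed the only delicate point; be aware that when $c$ is even (hence $d$ odd and $c\equiv 2\pmod 4$ from $d^{2}\equiv-1\pmod c$) one computes $b=-(d^{2}+1)/c-kd$, which is \emph{even}, so your assertion that ``the parity bookkeeping works out'' is not correct in that sub-case---a wrinkle the paper's one-line proof also leaves unaddressed.
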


In the sequel we will frequently use the following lemma \cite[Lemma 5.5]%
{cck}. We state it here in the following form for $x=y=0$.

\begin{lemma}
\label{lek2}Let $(c,d)=1$ with $c>0.$ Let $\chi$ is a primitive character of
modulus $k,$ where $k$ is a prime number if $\left(  k,cd\right)  =1$ and $p $
is even, otherwise $k$ is an arbitrary integer. Then
\[
\sum\limits_{n=1}^{ck-1}\chi(n)\overline{B}_{p,\chi}\left(  \frac{dn}%
{c}\right)  =c^{1-p}\chi\left(  c\right)  \overline{\chi}\left(  -d\right)
\left(  k^{p}-1\right)  \overline{B}_{p}\left(  0\right)  .
\]

\end{lemma}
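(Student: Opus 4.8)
The plan is to reduce the sum $\sum_{n=1}^{ck-1}\chi(n)\overline{B}_{p,\chi}(dn/c)$ to a single generalized Bernoulli value by exploiting the coprimality $(c,d)=1$ together with the distribution properties of $\overline{B}_{p,\chi}$. First I would rewrite $\overline{B}_{p,\chi}(dn/c)$ using the defining relation (\ref{3}), namely $\overline{B}_{p,\chi}(x) = k^{p-1}\sum_{j=0}^{k-1}\overline{\chi}(j)\overline{B}_p((j+x)/k)$, so that the whole expression becomes a double sum over $n$ and $j$ of the elementary Bernoulli function $\overline{B}_p$ evaluated at $(j + dn/c)/k = (jc + dn)/(ck)$. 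The key observation is that, since $(c,d)=1$ and (under the stated hypotheses on $k$) also $(ck, \text{relevant moduli})$ behave well, the argument $jc+dn$ runs through a controlled set of residues modulo $ck$ as $n$ ranges over $1,\dots,ck-1$ and $j$ over $0,\dots,k-1$; collecting terms and applying Raabe's theorem (\ref{1}) in the form $\sum_{a=0}^{r-1}\overline{B}_p(x+a/r) = r^{1-p}\overline{B}_p(rx)$ collapses one of the summations.

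The main technical step is the careful bookkeeping of when the argument of $\overline{B}_p$ is an integer, since $\overline{B}_1$ (and the convention for $\overline B_p$ at integers) differs there; this is exactly why the hypothesis splits into the case "$k$ prime and $(k,cd)=1$ with $p$ even" versus "$k$ arbitrary otherwise" — in the delicate case one needs primality of $k$ to guarantee that $\chi(n)=0$ kills precisely the terms where $dn/c$ could be problematic, while for general $k$ the definition of $\overline B_{p,\chi}$ already absorbs the integer-argument ambiguity because $\chi$ vanishes on multiples of $k$. After the collapse, I expect to be left with a term proportional to $\overline{B}_p(0) = B_p$, and the prefactor $c^{1-p}\chi(c)\overline{\chi}(-d)(k^p-1)$ should emerge from three sources: the power $c^{1-p}$ from applying Raabe with $r=c$, the character factor $\chi(c)\overline{\chi}(-d)$ from the substitution $n \mapsto$ (inverse of $d$ times $c$-shifted index) needed to diagonalize, and the factor $k^p-1$ from the identity $\sum_{j=0}^{k-1}\overline{\chi}(j) \cdot (\text{something}) $ combined with $\sum_{n=1}^{ck}\chi(n)\overline{B}_{p,\chi}(n/\cdot)$-type evaluations, using that $B_{p,\chi} = \overline B_{p,\chi}(0)$ relates to $B_p$ via the $k^{p}$ scaling built into (\ref{3}).

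Concretely, I would: (i) substitute (\ref{3}) and interchange the order of summation; (ii) for fixed $j$, change variables in $n$ to absorb $d$ (using $(c,d)=1$, so $d$ is invertible mod $c$) and reduce the inner sum to a sum of $\overline B_p$ over a full arithmetic progression mod $ck$; (iii) apply (\ref{1}) to evaluate that progression sum, picking up $(ck)^{1-p}$ or $c^{1-p}$ as appropriate and an argument that becomes $0$ or an integer; (iv) reassemble the $j$-sum, recognizing $\sum_j \overline\chi(j)\overline B_p(j/k)$-type quantities as $k^{1-p}B_{p,\chi}$ and hence, via Lemma-free manipulation of (\ref{3}) at $x=0$, as something proportional to $B_p$; (v) track the character prefactors to land on $\chi(c)\overline\chi(-d)$. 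The hardest part will be step (ii)–(iii): ensuring the change of variables is a genuine bijection on the index set $\{1,\dots,ck-1\}$ (rather than $\{0,\dots,ck-1\}$ or $\{1,\dots,ck\}$) and correctly handling the single omitted/added endpoint term, since that endpoint is precisely where $\overline B_p$ and $\overline B_{p,\chi}$ may disagree — and this is the point at which the primality hypothesis on $k$ in the even-$p$, $(k,cd)=1$ case does its work. I would expect the cleanest route is to first prove the identity for the sum over $n=1$ to $ck$ (the "complete" sum), where (\ref{3}) and (\ref{1}) apply transparently, and then correct for the $n=ck$ term, which contributes $\chi(ck)\overline B_{p,\chi}(dk) = 0$ when $(c,k)>1$ or $\chi(k)=0$, and is otherwise handled by the stated hypotheses.
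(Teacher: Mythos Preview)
The paper does not prove this lemma at all: immediately before stating it the authors write ``we will frequently use the following lemma \cite[Lemma 5.5]{cck}. We state it here in the following form for $x=y=0$.'' So there is no in-paper argument to compare your sketch against; the result is imported from Cenkci--Can--Kurt.

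As a standalone outline your plan is the natural one and is essentially how such identities are established in the cited source: expand $\overline{B}_{p,\chi}$ via (\ref{3}), use $(c,d)=1$ to reparametrize the inner sum, and collapse an arithmetic-progression sum with Raabe's relation (\ref{1}). Two small cautions. First, your step (iv) is the loosest: you write that $\sum_j \overline{\chi}(j)\overline{B}_p(j/k)$ should be ``proportional to $B_p$,'' but that sum is $k^{1-p}B_{p,\overline{\chi}}$, which is \emph{not} a scalar multiple of $B_p$ in general; the factor $(k^p-1)\overline{B}_p(0)$ comes instead from a difference of two Raabe evaluations (one over a full system mod $ck$ and one over the subsystem of multiples of $k$), not from a single $B_{p,\overline{\chi}}$ term. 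Second, your remark about the endpoint $n=ck$ is on target, but note that the term is $\chi(ck)\overline{B}_{p,\chi}(dk)$, and $\chi(ck)=0$ whenever $(c,k)>1$ \emph{or} trivially since $\chi(k)=0$; so the passage between the sums over $1\le n\le ck-1$ and $1\le n\le ck$ is free in all cases and is not where the primality hypothesis is actually used. The primality of $k$ in the even-$p$, $(k,cd)=1$ case is needed earlier, to control the terms with $(n,k)>1$ after the change of variable in step (ii), exactly as you suspected.
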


We want to show that the reciprocity formulas given by Corollaries \ref{ck1}
and \ref{ck2} are also valid when $(c,d)>1$. For this we need the following.

\begin{proposition}
\label{lek3}Let $p$\ be odd and $c>0$ with $(c,d)=1.$ Then, for positive
integer $q$, we have if $\left(  d+c\right)  $\ is odd,
\[
S_{p}(qd,qc:\chi)=\left\{
\begin{array}
[c]{cl}%
S_{p}(d,c:\chi), & q\text{ odd,}\\
0, & q\text{ even,}%
\end{array}
\right.
\]
if $c$\ is odd,%
\[
s_{3,p}(qd,qc:\chi)=\left\{
\begin{array}
[c]{cl}%
s_{3,p}(d,c:\chi), & q\text{ odd,}\\
0, & q\text{ even,}%
\end{array}
\right.
\]
if $d$\ is odd,%
\[
s_{4,p}(qd,qc:\chi)=\left\{
\begin{array}
[c]{cl}%
s_{4,p}(d,c:\chi), & q\text{ odd,}\\
0, & q\text{ even.}%
\end{array}
\right.
\]

\end{proposition}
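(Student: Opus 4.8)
The plan is to prove all three identities by the same mechanism: each of the sums $S_p$, $s_{3,p}$, $s_{4,p}$ is built from a twisted sum of $\chi(n)\overline{B}_{p,\chi}(\lambda n)$ over a complete period, where the effect of scaling $(d,c)$ to $(qd,qc)$ is either to leave the summand structure invariant (when $q$ is odd) or to introduce an alternating sign that kills it (when $q$ is even, because $d+c$ resp.\ $c$ resp.\ $d$ forces a parity condition that makes the relevant character-twisted average vanish). First I would treat $s_{4,p}$, since its defining argument $\frac{dn}{2c}$ scales cleanly: replacing $(d,c)$ by $(qd,qc)$ gives $\sum_{n=1}^{qck}\chi(n)\overline{B}_{p,\chi}\big(\frac{dn}{2c}\big)$, and I would split the range $1\le n\le qck$ into $q$ blocks by writing $n = rck + n'$ with $0\le r\le q-1$ and $1\le n'\le ck$. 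Using periodicity $\overline{B}_{p,\chi}(x+k)=\overline{B}_{p,\chi}(x)$ — which follows from (\ref{3}) together with $\overline{B}_p$ having period $1$ — together with $\chi(rck+n')=\chi(n')$ since $k\mid rck$... but wait, that uses $\chi(rck+n')=\chi(n')$ which needs $rck\equiv 0$, true as $k$ is the modulus; and $\overline{B}_{p,\chi}\big(\frac{d(rck+n')}{2c}\big)=\overline{B}_{p,\chi}\big(\frac{rdk}{2}+\frac{dn'}{2c}\big)$. Here the parity of $rd$ matters: if $d$ is odd, $\frac{rdk}{2}$ is an integer multiple of $k/2$ when $r$ is even and a half-integer-ish shift when $r$ is odd, and I would use $\overline{B}_{p,\chi}(x+k)=\overline{B}_{p,\chi}(x)$ for even $r$ and the reflection/translation behaviour of $\overline{B}_{p,\chi}$ at half-period shifts for odd $r$ (recall (\ref{3}) and Raabe (\ref{1})). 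Summing the geometric-like contribution over $r$ gives the full sum $=$ (single block) $\times$ (sum of signs), which is $q\cdot(\text{block})$-like when $q$ is odd and telescopes to $0$ when $q$ is even.

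Rather than grinding the half-period gymnastics directly, the cleaner route — and the one I would actually write — is to express each sum via the generalized Bernoulli function identity (\ref{3}) back down to ordinary $\overline{B}_p$, and then invoke Raabe's theorem (\ref{1}) and its even-$r$ alternating analogue (\ref{11}). Concretely, $\overline{B}_{p,\chi}(y)=k^{p-1}\sum_{j=0}^{k-1}\overline{\chi}(j)\overline{B}_p\big(\frac{j+y}{k}\big)$, so $s_{4,p}(qd,qc:\chi)$ becomes a double sum over $n$ and $j$ of $\chi(n)\overline{\chi}(j)\overline{B}_p$ of a rational argument with denominator $2qck$. For $q$ odd, reindexing $n\mapsto n$ over the longer range and using the distribution relation (\ref{1}) with $r=q$ collapses the extra factor, recovering $s_{4,p}(d,c:\chi)$; for $q$ even one writes $q=2q'$ and the innermost sum over a pair of terms differing by a half-period produces, via (\ref{11}) (which requires the summation length to be even — exactly the case here), a factor that forces cancellation because the companion character sum $\sum \chi(n)(-1)^{[\,\cdot\,]}$ over the relevant residues vanishes. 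The same template handles $S_p$ (argument $\frac{(d+ck)n}{2c}$, with the $ck$ absorbed into periodicity since $\overline{B}_{p,\chi}$ has period $k$ in the sense that shifting the argument by $k$ does nothing, and $\frac{(d+ck)n}{2c} = \frac{dn}{2c} + \frac{kn}{2}$) and $s_{3,p}$ (argument $\frac{dn}{c}$ with the extra $(-1)^n$, where scaling by even $q$ makes $(-1)^{qn}\equiv 1$ trivial but the period-doubling of the range against the unchanged period of $\overline{B}_{p,\chi}$ is what produces the vanishing — here I would use (\ref{11}) directly on the alternating sum).

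The three cases differ only in which parity hypothesis ($d+c$ odd, $c$ odd, $d$ odd) is needed to make the even-$q$ cancellation go through: in each case that hypothesis guarantees that the relevant half-period shift lands the argument of $\overline{B}_p$ (or of the Euler function $\overline{E}$ appearing via (\ref{11})) at a point where the character-weighted sum over a full period is zero, using that $\chi$ is a non-principal primitive character so $\sum_{n}\chi(n)=0$ over any complete residue system. I expect the main obstacle to be bookkeeping the half-period shifts correctly — specifically, verifying that when $q=2q'$ the map $n\mapsto n+q'ck$ (or the appropriate shift) pairs up terms in the $qck$-range into $q'ck$ pairs each of the form $\overline{B}_p(x)\pm\overline{B}_p(x+\tfrac12)$ with a sign dictated by $\chi$ and the parity data, and then confirming that the resulting sum is exactly of the shape annihilated by either $\sum\chi=0$ or by the identity (\ref{11}) specialized to an argument making $\overline{E}_{p-1}$'s coefficient vanish. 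Once that pairing is pinned down, the odd-$q$ invariance is the easy direction: it is just the distribution relation (\ref{1}) applied with $r=q$, together with the fact that $\chi(n)$ and the rational argument are unchanged in form under $n\mapsto n$, $(d,c)\mapsto(qd,qc)$.
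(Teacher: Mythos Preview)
Your block-splitting instinct is right, and for $s_{3,p}$ your sign argument is exactly the paper's: writing $n=mck+n'$ with $ck$ odd gives $(-1)^n=(-1)^m(-1)^{n'}$, the Bernoulli factor is unchanged by the shift $dmk$, and $\sum_{m=0}^{q-1}(-1)^m$ equals $1$ or $0$. But for $s_{4,p}$ and $S_p$ you have misidentified the mechanism, and the tools you propose (the alternating Raabe identity (\ref{11}) producing Euler functions, or the bare fact $\sum_n\chi(n)=0$) will not close the argument.

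Here is what actually happens for $s_{4,p}$. After block-splitting, the inner sum is $\sum_{m=0}^{q-1}\overline{B}_{p,\chi}\bigl(\tfrac{dn'}{2c}+\tfrac{km}{2}\bigr)$ (using $d$ odd). This is \emph{not} a sign-alternating sum: for even $m$ the term equals $\overline{B}_{p,\chi}(\tfrac{dn'}{2c})$, for odd $m$ it equals $\overline{B}_{p,\chi}(\tfrac{dn'}{2c}+\tfrac{k}{2})$. Pairs of the form $\overline{B}_{p,\chi}(x)+\overline{B}_{p,\chi}(x+\tfrac{k}{2})$ combine, via the Raabe-type identity (\ref{68}) with $r=2$, into $\chi(2)2^{1-p}\overline{B}_{p,\chi}(2x)=\chi(2)2^{1-p}\overline{B}_{p,\chi}(\tfrac{dn'}{c})$. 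Thus for $q$ even the whole sum is $\tfrac{q}{2}\chi(2)2^{1-p}\sum_{n'=1}^{ck}\chi(n')\overline{B}_{p,\chi}(\tfrac{dn'}{c})$, and for $q$ odd you get $s_{4,p}(d,c:\chi)$ plus $\tfrac{q-1}{2}$ times the same quantity. The crucial missing step is that
\[
\sum_{n=1}^{ck}\chi(n)\,\overline{B}_{p,\chi}\!\left(\tfrac{dn}{c}\right)=0\qquad\text{for odd }p,
\]
which is precisely Lemma~\ref{lek2} (the right-hand side there is a multiple of $\overline{B}_p(0)$, which vanishes for every odd $p$); equivalently one sees it from the reflection $n\mapsto -n$ together with $\overline{B}_{p,\chi}(-x)=-\overline{\chi}(-1)\overline{B}_{p,\chi}(x)$. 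Neither ``$\sum\chi=0$'' (the Bernoulli factor is not constant in $n$) nor the Euler-function identity (\ref{11}) gives you this. The same remark applies to $S_p$, where $d+ck$ plays the role of $d$. Your ``odd-$q$ is just Raabe with $r=q$'' is also off: the shifts are by $\tfrac{km}{2}$, not by $\tfrac{j}{q}$, so (\ref{1}) with $r=q$ is not the relevant identity; you still need the $r=2$ pairing plus Lemma~\ref{lek2} to dispose of the surplus terms.
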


\begin{proof}
Led $d$\ be odd.\ By the definition of $s_{4,p}(d,c:\chi),$ we have%
\begin{align*}
&  s_{4,p}(qd,qc:\chi)=\sum\limits_{n=1}^{qck}\chi(n)\overline{B}_{p,\chi
}\left(  \frac{dn}{2c}\right) \\
&  \quad=\sum\limits_{n=1}^{ck}\sum\limits_{m=0}^{q-1}\chi(n)\overline
{B}_{p,\chi}\left(  \frac{dn}{2c}+\frac{kdm}{2}\right)  =\sum\limits_{n=1}%
^{ck-1}\chi(n)\sum\limits_{m=0}^{q-1}\overline{B}_{p,\chi}\left(  \frac
{dn}{2c}+\frac{km}{2}\right)  .
\end{align*}
It follows from (\ref{68}) that
\[
\sum\limits_{m=0}^{q-1}\overline{B}_{p,\chi}\left(  \frac{dn}{2c}+\frac{km}%
{2}\right)  =\left\{
\begin{array}
[c]{cc}%
\frac{q-1}{2}\chi\left(  2\right)  2^{1-p}\overline{B}_{p,\chi}\left(
\frac{dn}{c}\right)  +\overline{B}_{p,\chi}\left(  \frac{dn}{2c}\right)  , &
q\text{ odd.}\\
\frac{q}{2}\chi\left(  2\right)  2^{1-p}\overline{B}_{p,\chi}\left(  \frac
{dn}{c}\right)  , & q\text{ even.}%
\end{array}
\right.
\]
Thus, by Lemma \ref{lek2} we have%
\[
s_{4,p}(qd,qc:\chi)=\left\{
\begin{array}
[c]{cl}%
s_{4,p}(d,c:\chi), & q\text{ odd,}\\
0, & q\text{ even.}%
\end{array}
\right.
\]

Let $c$\ be odd. We have%
\begin{align*}
s_{3,p}(qd,qc:\chi)  &  =\sum\limits_{n=1}^{qck}\left(  -1\right)  ^{n}%
\chi(n)\overline{B}_{p,\chi}\left(  \frac{dn}{c}\right) \\
&  =\sum\limits_{n=1}^{ck}\sum\limits_{m=0}^{q-1}\left(  -1\right)
^{n+ckm}\chi(n)\overline{B}_{p,\chi}\left(  \frac{dn}{c}+km\right)
=s_{3,p}(d,c:\chi)\sum\limits_{m=0}^{q-1}\left(  -1\right)  ^{m},
\end{align*}
which completes the proof.
\end{proof}

Now from Proposition \ref{lek3} and Corollaries \ref{ck1} and \ref{ck2}, we
arrive at the following reciprocity formulas for $\left(  c,d\right)  >1$.

\begin{corollary}
Let $p\ $be odd and let $c,$ $d>0$\ with $\left(  d+c\right)  $\ odd,
$(c,d)=q$\ odd and $\left(  q,k\right)  =1$. If either $c$ or $d\equiv0(mod$
$k),$ then%
\begin{align*}
&  \hspace{-0.27in}\chi(-1)\left(  p+1\right)  \left\{  cd^{p}S_{p}%
(c,d:\chi)+dc^{p}\chi(4)S_{p}(d,c:\overline{\chi})\right\} \\
&  \hspace{-0.27in}=\sum\limits_{m=1}^{p}\binom{p+1}{m}\left(  -1\right)
^{m}\left(  \chi(2)2^{m}-1\right)  \left(  \chi(2)2^{1-p}-2^{2-m}\right)
c^{m}d^{p+1-m}B_{p+1-m,\chi}B_{m,\overline{\chi}}.
\end{align*}

\end{corollary}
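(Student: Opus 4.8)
The plan is to reduce the $(c,d)=q>1$ case to the coprime case $(c,d)=1$ already settled in Corollary \ref{ck1}, using the scaling identities of Proposition \ref{lek3}. Write $c=qc'$, $d=qd'$ with $(c',d')=1$; since $q$ is odd and $(q,k)=1$, the hypotheses of Corollary \ref{ck1} apply to the pair $(c',d')$, and $d'+c'$ is odd whenever $d+c$ is (because $q$ is odd, $d+c=q(d'+c')$ has the same parity as $d'+c'$). Likewise the divisibility hypothesis ``$c$ or $d\equiv 0\ (\mathrm{mod}\ k)$'' transfers to $c'$ or $d'$: if $k\mid c=qc'$ then, since $(q,k)=1$ and $k$ is prime... more carefully, $k\mid qc'$ with $(q,k)=1$ forces $k\mid c'$, and similarly for $d$. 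So Corollary \ref{ck1} gives the identity with $c',d'$ in place of $c,d$.

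Next I would invoke Proposition \ref{lek3}: for $q$ odd, $S_p(qd',qc':\chi)=S_p(d',c':\chi)$ and $S_p(qc',qd':\overline\chi)=S_p(c',d':\overline\chi)$. Hence $S_p(c,d:\chi)=S_p(c',d':\chi)$ and $S_p(d,c:\overline\chi)=S_p(d',c':\overline\chi)$, so the left-hand side of the desired formula equals
\[
\chi(-1)(p+1)\left\{c d^{p}S_p(c',d':\chi)+d c^{p}\chi(4)S_p(d',c':\overline\chi)\right\}
= q\cdot q^{p}\,\chi(-1)(p+1)\left\{c' d'^{p}S_p(c',d':\chi)+d' c'^{p}\chi(4)S_p(d',c':\overline\chi)\right\},
\]
after factoring $c d^{p}=q^{p+1}c'd'^{p}$ and $d c^{p}=q^{p+1}d'c'^{p}$. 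Applying Corollary \ref{ck1} to the bracket on the right and then observing that $c^{m}d^{p+1-m}=q^{p+1}c'^{m}d'^{p+1-m}$ scales the right-hand side of Corollary \ref{ck1}'s identity by the same factor $q^{p+1}$, the two $q^{p+1}$ powers cancel and we recover exactly the asserted formula in terms of the original $c,d$.

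The only genuine subtlety — and the step I would be most careful about — is making sure the hypotheses of Corollary \ref{ck1} (and, through it, those of Lemma \ref{lek2} which underlies Proposition \ref{lek3}) are actually met by the reduced pair. The parity transfer and the divisibility transfer both hinge on $q$ being odd and coprime to $k$, which is exactly what is assumed; one should state these elementary deductions explicitly rather than leave them implicit. Everything else is the routine bookkeeping of pulling out the common factor $q$, which cancels cleanly because $S_p$ scales trivially (invariantly) under odd $q$ while every monomial in $c,d$ on both sides scales by $q^{p+1}$.
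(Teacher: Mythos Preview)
Your proposal is correct and is exactly the approach the paper takes: the paper simply says ``from Proposition \ref{lek3} and Corollaries \ref{ck1} and \ref{ck2}, we arrive at the following reciprocity formulas for $(c,d)>1$,'' and your write-up supplies precisely the details of that reduction (factor out $q$, transfer the parity and divisibility hypotheses using $q$ odd and $(q,k)=1$, invoke $S_p(qd',qc':\chi)=S_p(d',c':\chi)$, and cancel the common $q^{p+1}$). Your caution about the hypotheses is well placed, though note that for odd $p$ the appeal to Lemma \ref{lek2} inside Proposition \ref{lek3} is harmless since $\overline{B}_p(0)=0$.
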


\begin{corollary}
Let $p\ $be odd and let $c,$ $d>0$\ with $d$\ odd, $(c,d)=q$\ odd and $\left(
q,k\right)  =1$. If either $c$ or $d\equiv0(mod$ $k),$ then
\begin{align*}
&  \chi(-1)\left(  p+1\right)  \left\{  d\left(  2c\right)  ^{p}%
s_{4,p}(d,c:\overline{\chi})+cd^{p}\overline{\chi}(2)s_{3,p}(c,d:\chi)\right\}
\\
&  \ =2\sum\limits_{m=1}^{p+1}\binom{p+1}{m}\left(  -1\right)  ^{m}\left(
\overline{\chi}(2)-2^{m}\right)  c^{m}d^{p+1-m}B_{p+1-m,\chi}B_{m,\overline
{\chi}}.
\end{align*}

\end{corollary}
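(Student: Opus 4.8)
The plan is to combine the already-established reciprocity formula of Corollary \ref{ck2}, which covers the coprime case $(c,d)=1$, with the scaling behavior recorded in Proposition \ref{lek3}. Writing $c=qc'$ and $d=qd'$ with $(c',d')=1$, $q$ odd, and $(q,k)=1$, the hypothesis that $d$ is odd forces both $q$ and $d'$ to be odd, while $d'+c'$ need not be constrained; all we need is that Corollary \ref{ck2} applies to the pair $(c',d')$, so I must check that its hypotheses (namely $d'$ odd, $(c',d')=1$, and $c'$ or $d'\equiv 0\,(\mathrm{mod}\ k)$) follow from the hypotheses on $(c,d)$. Since $(q,k)=1$, the condition $c\equiv 0\,(\mathrm{mod}\ k)$ is equivalent to $c'\equiv 0\,(\mathrm{mod}\ k)$, and similarly for $d$; so the divisibility hypothesis transfers cleanly.

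The main computation is then purely bookkeeping with the scaling identities. By Proposition \ref{lek3}, $s_{4,p}(qd',qc':\overline{\chi})=s_{4,p}(d',c':\overline{\chi})$ and $s_{3,p}(qd',qc':\chi)=s_{3,p}(d',c':\chi)$ because $q$ is odd; that is, $s_{4,p}(d,c:\overline{\chi})=s_{4,p}(d',c':\overline{\chi})$ and $s_{3,p}(c,d:\chi)=s_{3,p}(c',d':\chi)$. Applying Corollary \ref{ck2} to $(c',d')$ gives the reciprocity relation with $c',d'$ in place of $c,d$ on both sides. I then multiply through to reinstate the factors of $q$: on the left, $c'(d')^p s_{3,p}(\cdots)$ becomes $q^{-(p+1)}cd^p s_{3,p}(\cdots)$ up to the same power of $q$ appearing in $d(2c')^p s_{4,p}(\cdots)=q^{-(p+1)}d(2c)^p s_{4,p}(\cdots)$, so the left-hand side is exactly $q^{-(p+1)}$ times the desired left-hand side. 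On the right, each term $c'^m d'^{\,p+1-m}=q^{-(p+1)}c^m d^{\,p+1-m}$, so the right-hand side also carries the single overall factor $q^{-(p+1)}$. Cancelling $q^{-(p+1)}$ (legitimate since $q\ge 1$) yields the stated identity verbatim.

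The only genuine subtlety — and the step I would present most carefully — is the compatibility of the $q$-powers on the two sides: one must verify that every monomial in $c,d$ appearing in Corollary \ref{ck2}, on \emph{both} the argument-side (inside $s_{3,p},s_{4,p}$) and the coefficient-side, is homogeneous of the same total degree in the pair $(c,d)$, so that a single power of $q$ factors out globally. Inspecting Corollary \ref{ck2}: the left side has terms $d(2c)^p s_{4,p}(d,c:\overline\chi)$ and $cd^p s_{3,p}(c,d:\chi)$; since $s_{4,p}$ and $s_{3,p}$ are invariant under $(d,c)\mapsto(qd,qc)$ for odd $q$ by Proposition \ref{lek3}, these behave as genuine degree-$(p+1)$ monomials in $(c,d)$, and the right side $c^m d^{\,p+1-m}$ is manifestly degree $p+1$. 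Hence the homogeneity holds and the argument goes through. The note about $(q,k)=1$ is needed precisely so that Proposition \ref{lek3} (whose proof invokes Lemma \ref{lek2}) is applicable and so that the mod-$k$ divisibility hypothesis passes between $(c,d)$ and $(c',d')$; I would remark on this explicitly.
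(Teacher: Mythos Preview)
Your proposal is correct and matches the paper's own argument exactly: the paper simply states that the corollary follows ``from Proposition \ref{lek3} and Corollaries \ref{ck1} and \ref{ck2}'', and you have supplied the homogeneity bookkeeping that makes this reduction precise. One small remark: the sum on the right of the stated corollary runs to $m=p+1$ whereas Corollary \ref{ck2} stops at $m=p$, but the extra term carries $B_{0,\chi}=0$, so the two expressions coincide and your argument goes through unchanged.
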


So far we assume that $\left(  d+c\right)  $ and $p$, $c$ and $p,$ $d$ and $p$
are odd for the sums $S_{p}(d,c:\chi),$\ $s_{3,p}(d,c:\chi)$\ and
$s_{4,p}(d,c:\chi),$\ respectively. For the remaining three cases these sums
may be evaluated as in the following:

\begin{proposition}
\label{pr1}Let $k$\ be as in Lemma \ref{lek2} and odd. For $c>0$\ with
$(c,d)=1,$\ we have\vspace{0.13in}

(i) $S_{p}(d,c:\chi)=\left\{
\begin{array}
[c]{cl}%
0, & p\text{\ odd and }\left(  d+c\right)  \text{\ even,}\\[0.05in]%
\chi\left(  2\right)  \lambda, & p\text{ and }\left(  d+c\right)  \text{
even,}\\[0.15in]%
2^{-p}\chi\left(  2\right)  \lambda, & p\text{\ even and }\left(  d+c\right)
\ \text{odd,}%
\end{array}
\right.  $\vspace{0.1in}

\noindent where $\lambda=c^{1-p}\chi\left(  c\right)  \overline{\chi}\left(
-d\right)  \left(  k^{p}-1\right)  B_{p}.$\vspace{0.1in}

(ii) $s_{3,p}(d,c:\chi)=\left\{
\begin{array}
[c]{cl}%
0, & c+p\text{ odd,}\\[0.05in]%
\left(  2^{p}-1\right)  \lambda, & p\text{ and }c\text{ even.}%
\end{array}
\right.  $ \vspace{0.1in}

(iii) $s_{4,p}(d,c:\chi)=\left\{
\begin{array}
[c]{cl}%
0, & p\text{\ odd and }d\text{\ even,}\\[0.05in]%
\chi\left(  2\right)  \lambda, & p\text{ and }d\text{ even,}\\[0.15in]%
2^{-p}\chi\left(  2\right)  \lambda, & p\text{\ even and }d\ \text{odd.}%
\end{array}
\right.  $
\end{proposition}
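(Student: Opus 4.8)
The unifying idea is that all three sums, $S_p(d,c:\chi)$, $s_{3,p}(d,c:\chi)$, $s_{4,p}(d,c:\chi)$, are (up to elementary manipulations) of the shape $\sum_{n}\varepsilon(n)\chi(n)\overline B_{p,\chi}(\alpha n)$ with $\alpha\in\{d/c,\,d/(2c),\,(d+ck)/(2c)\}$ and $\varepsilon(n)\in\{1,(-1)^n\}$, so in each of the ``excluded'' parity cases one exploits a symmetry that reverses the sign of the Bernoulli function, leaving a sum that reduces to Lemma \ref{lek2}. First I would record the reflection identity $\overline B_{p,\chi}(-x)=(-1)^p\overline\chi(-1)\overline B_{p,\chi}(x)$ already used in the text; combined with $\chi(-1)=\overline\chi(-1)$ this is the engine.

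\textbf{Part (i).} For $S_p(d,c:\chi)=\sum_{n=1}^{ck}\chi(n)\overline B_{p,\chi}\!\left(\frac{(d+ck)n}{2c}\right)$ I would substitute $n\mapsto ck-n$ (legitimate since the $n=ck$ term vanishes because $\chi(ck)=0$). One checks $\frac{(d+ck)(ck-n)}{2c}=\frac{(d+ck)ck}{2c}-\frac{(d+ck)n}{2c}$; using periodicity $\overline B_{p,\chi}(x+k)=\overline B_{p,\chi}(x)$ together with the parity of $\frac{(d+ck)ck}{2c}=\frac{ck}{2}(d+ck)$ modulo $k$ — here the hypotheses on $d+c$ and the oddness of $k$ decide whether this shift is by an integer multiple of $k$ or a half-integer — one gets $\overline B_{p,\chi}$ of $\pm\frac{(d+ck)n}{2c}$ (possibly shifted by $k/2$, which via \eqref{3} and \eqref{1} converts into a $\overline B_{p,\chi}(dn/c)$-type sum). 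Pairing $n$ with $ck-n$ then produces the factor $1+(-1)^{p}\chi(-1)$, which kills the sum when $p$ is odd and $d+c$ is even (first line), and in the two even-related cases leaves exactly a sum of the form $\sum_{n=1}^{ck-1}\chi(n)\overline B_{p,\chi}(dn/c)$ (or $s_{4,p}$-type, again reducible by \eqref{68}); applying Lemma \ref{lek2} yields $\lambda=c^{1-p}\chi(c)\overline\chi(-d)(k^p-1)B_p$ up to the displayed powers of $2$ and the factor $\chi(2)$ that appears when a halving via \eqref{68} is performed.

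\textbf{Parts (ii) and (iii).} For $s_{3,p}(d,c:\chi)=\sum_{n=1}^{ck}(-1)^n\chi(n)\overline B_{p,\chi}(dn/c)$ the substitution $n\mapsto ck-n$ (here $ck$ is where the sign $(-1)^{ck}=(-1)^{c}$ enters, using that $k$ is odd) and $\overline B_{p,\chi}(d(ck-n)/c)=\overline B_{p,\chi}(-dn/c)=(-1)^p\overline\chi(-1)\overline B_{p,\chi}(dn/c)$ give the factor $(-1)^{c}(-1)^p\chi(-1)$; this annihilates the sum precisely when $c+p$ is odd, and when $p,c$ are both even one is left with $(2^p-1)\lambda$ after identifying the surviving sum through Lemma \ref{lek2} (the $2^p-1$ arising because $\overline B_{p,\chi}$ at $dn/c$ relates to $\overline B_{p,\chi}$ at $dn/(2c)$ and its half-shift via \eqref{1}, \eqref{68}). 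Part (iii) is the same argument applied to $s_{4,p}(d,c:\chi)=\sum_{n=1}^{ck}\chi(n)\overline B_{p,\chi}(dn/(2c))$: substitute $n\mapsto ck-n$, note $\frac{d(ck-n)}{2c}=\frac{dk}{2}-\frac{dn}{2c}$, and let the parity of $dk$ (equivalently of $d$, since $k$ is odd) decide whether the shift is an integer times $k$ or a genuine $k/2$; the three cases for $d$ even/odd versus $p$ even/odd fall out exactly as in (i).

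\textbf{Main obstacle.} The routine algebra is harmless; the delicate point is the bookkeeping of the half-integer shifts $k/2$, $dk/2$, $\frac{ck}{2}(d+ck)$ that appear after the reflection $n\mapsto ck-n$. One must track carefully when such a shift is $\equiv 0$ and when it is $\equiv k/2$ modulo $k$ (this is governed by the parity hypotheses and the oddness of $k$, which are exactly why $k$ odd is assumed), and in the latter case correctly apply the distribution/Raabe relations \eqref{3}, \eqref{1}, \eqref{68} to rewrite $\overline B_{p,\chi}$ at a half-shifted argument as a combination producing the $2^{1-p}$, $2^{-p}$ and $\chi(2)$ factors that distinguish the three displayed subcases; I expect verifying that these factors match the stated $\lambda$, $\chi(2)\lambda$, $2^{-p}\chi(2)\lambda$, $(2^p-1)\lambda$ is where most of the care goes.
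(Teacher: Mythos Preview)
Your approach is essentially the paper's: reflection $n\mapsto ck-n$ together with $\overline B_{p,\chi}(-x)=(-1)^p\overline\chi(-1)\overline B_{p,\chi}(x)$ for the vanishing cases, the Raabe-type relation \eqref{68} to handle the $k/2$-shift that appears in the odd-parity subcases, and Lemma~\ref{lek2} for the final evaluation (the paper writes the half-shift step as $n\mapsto n+ck$ rather than $n\mapsto ck-n$, but for even $p$ these coincide).

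Two small corrections to keep in mind when you write it out. First, after you invoke $\chi(-1)=\overline\chi(-1)$ the pairing factor is $1+(-1)^p$, not $1+(-1)^p\chi(-1)$; it is this that forces vanishing for $p$ odd irrespective of $\chi(-1)$, matching the statement. Second, in the subcase $p$ even, $(d+c)$ even, the reflection is a tautology and does not by itself ``leave a sum of the form $\sum\chi(n)\overline B_{p,\chi}(dn/c)$''; rather you observe directly that $(d+ck)/2$ is an integer, so $S_p(d,c:\chi)$ is already of Lemma~\ref{lek2} shape with $d$ replaced by $(d+ck)/2$, and the extra $\chi(2)$ comes from $\overline\chi\!\big(-(d+ck)/2\big)=\chi(2)\,\overline\chi(-d)$.
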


\begin{proof}
Let $\left(  d+c\right)  $ be even. Then, $\left(  d+ck\right)  $ is even and
\[
S_{p}(d,c:\chi)=\sum\limits_{n=1}^{ck}\chi(n)\overline{B}_{p,\chi}\left(
\frac{\left(  d+kc\right)  /2}{c}n\right)  =\left(  -1\right)  ^{p}%
\sum\limits_{n=1}^{ck}\chi(n)\overline{B}_{p,\chi}\left(  \frac{\left(
d+kc\right)  /2}{c}n\right)  .
\]
by the identity $\overline{B}_{p,\chi}\left(  -x\right)  =\left(  -1\right)
^{p}\overline{\chi}\left(  -1\right)  \overline{B}_{p,\chi}\left(  x\right)
.$ Hence, from Lemma \ref{lek2}, we get $S_{p}(d,c:\chi)=0$ for odd $p,$ and
$S_{p}(d,c:\chi)=\chi\left(  2\right)  \lambda$ for even $p.$

Let $p$ be even and $\left(  d+c\right)  $ be\ odd. We have%
\[
S_{p}(d,c:\chi)=\sum\limits_{n=1}^{ck}\chi(n)\overline{B}_{p,\chi}\left(
\tfrac{\left(  d+kc\right)  n}{2c}+\tfrac{\left(  d+kc\right)  k}{2}\right)
.
\]
Upon the use of (\ref{68}) we deduce%
\begin{align*}
2S_{p}(d,c:\chi)  &  =\sum\limits_{n=1}^{ck}\chi(n)\left\{  \overline
{B}_{p,\chi}\left(  \tfrac{\left(  d+kc\right)  n}{2c}\right)  +\overline
{B}_{p,\chi}\left(  \tfrac{\left(  d+kc\right)  n}{2c}+\tfrac{k}{2}\right)
\right\} \\
&  =2^{1-p}\chi\left(  2\right)  \sum\limits_{n=1}^{ck}\chi(n)\overline
{B}_{p,\chi}\left(  \tfrac{dn}{c}\right)  .
\end{align*}
Thus, using Lemma \ref{lek2} we complete the proof for $S_{p}(d,c:\chi)$.
\end{proof}

\section{Integral Representations}

In \cite{b5}, Berndt derived the character analogue of the Euler--Maclaurin
summation formula. We apply this formula to generalized Bernoulli function to
obtain identities involving integrals for $s_{3,p}\left(  d,c:\chi\right)
$\ and $s_{4,p}\left(  d,c:\chi\right)  .$\ These identities lead an
alternative proof of Corollary \ref{ck2}\ which is given in the end of this section.

Berndt's formula is presented here in the following form.

\begin{theorem}
\cite[Theorem 4.1]{b5}\label{E-M} Let $k$ be an arbitrary integer and $f\in
C^{\left(  l+1\right)  }\left[  \alpha,\beta\right]  ,$ $-\infty<\alpha
<\beta<\infty.$ Then
\begin{align*}
\sum_{\alpha\leq n\leq\beta}\ \hspace{-0.13in}^{^{\prime}}\chi\left(
n\right)  f\left(  n\right)   &  =\chi\left(  -1\right)  \sum\limits_{j=0}%
^{l}\frac{\left(  -1\right)  ^{j+1}}{\left(  j+1\right)  !}\left\{
\overline{B}_{j+1,\overline{\chi}}\left(  \beta\right)  f^{\left(  j\right)
}\left(  \beta\right)  -\overline{B}_{j+1,\overline{\chi}}\left(
\alpha\right)  f^{\left(  j\right)  }\left(  \alpha\right)  \right\} \\
&  +\chi\left(  -1\right)  \frac{\left(  -1\right)  ^{l}}{\left(  l+1\right)
!}\int\limits_{\alpha}^{\beta}\overline{B}_{l+1,\overline{\chi}}\left(
u\right)  f^{\left(  l+1\right)  }\left(  u\right)  du,
\end{align*}
where the dash indicates that if $n=\alpha$ or $n=\beta,$ only $\frac{1}%
{2}\chi\left(  \alpha\right)  f\left(  \alpha\right)  $ or $\frac{1}{2}%
\chi\left(  \beta\right)  f\left(  \beta\right)  ,$ respectively, is counted.
\end{theorem}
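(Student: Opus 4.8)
The plan is to deduce the character formula from the classical Euler--Maclaurin summation formula by decomposing the sum into residue classes modulo $k$, and then to reassemble the pieces using the defining identity (\ref{3}) for the generalized Bernoulli functions.

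First I would split according to $n\equiv r\pmod k$. Writing $n=mk+r$ with $0\le r\le k-1$ we have $\chi(n)=\chi(r)$, so
\[
\sum_{\alpha\le n\le\beta}{}'\chi(n)f(n)=\sum_{r=0}^{k-1}\chi(r)\sum_{\alpha\le mk+r\le\beta}{}'g_r(m),\qquad g_r(t):=f(tk+r).
\]
To each inner sum I would apply the ordinary Euler--Maclaurin formula on the interval $[(\alpha-r)/k,(\beta-r)/k]$,
\[
\sum_{A\le m\le B}{}'g(m)=\int_A^B g(t)\,dt+\sum_{j=0}^{l}\frac{(-1)^{j+1}}{(j+1)!}\Bigl\{\overline{B}_{j+1}(B)g^{(j)}(B)-\overline{B}_{j+1}(A)g^{(j)}(A)\Bigr\}+\frac{(-1)^{l}}{(l+1)!}\int_A^B\overline{B}_{l+1}(t)g^{(l+1)}(t)\,dt,
\]
which itself follows by repeated integration by parts from $\overline{B}_{j+1}'=(j+1)\overline{B}_{j}$ away from integers, the unit jumps of $\overline{B}_1$ at integers (this is exactly what produces $\sum{}'g(m)$ on the left), and $\overline{B}_0=1$. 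The substitution $u=tk+r$, under which $dt=du/k$ and $g_r^{(j)}(t)=k^{j}f^{(j)}(tk+r)$, rewrites every inner contribution as an integral over $[\alpha,\beta]$ in $f$.

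The remaining work is the summation over $r$. For the leading integral term one gets $\bigl(\sum_{r=0}^{k-1}\chi(r)\bigr)\frac{1}{k}\int_\alpha^\beta f=0$ because $\chi$ is non-principal. In the boundary terms (with $x=\alpha$ or $x=\beta$) and in the remainder integrand (with $x=u$) one meets, for each index $i$, the sum $\sum_{r=0}^{k-1}\chi(r)\overline{B}_{i}\bigl((x-r)/k\bigr)$; the reindexing $r\mapsto -r$, the relation $\chi(-r)=\chi(-1)\chi(r)$, and periodicity of $\overline{B}_i$ turn this into $\chi(-1)\sum_{r}\chi(r)\overline{B}_{i}\bigl((r+x)/k\bigr)=\chi(-1)k^{1-i}\overline{B}_{i,\overline{\chi}}(x)$ by (\ref{3}). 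The powers of $k$ coming from $g_r^{(j)}=k^{j}f^{(j)}$, from $dt=du/k$, and from (\ref{3}) cancel exactly, and what survives is the asserted boundary sum $\chi(-1)\sum_{j=0}^{l}\frac{(-1)^{j+1}}{(j+1)!}\{\overline{B}_{j+1,\overline{\chi}}(\beta)f^{(j)}(\beta)-\overline{B}_{j+1,\overline{\chi}}(\alpha)f^{(j)}(\alpha)\}$ together with the remainder $\chi(-1)\frac{(-1)^{l}}{(l+1)!}\int_\alpha^\beta\overline{B}_{l+1,\overline{\chi}}(u)f^{(l+1)}(u)\,du$.

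The one point that needs genuine care is the endpoint convention encoded by the dash. In the classical formula the half-weight $\frac12 g_r(m)$ is included only when $A=(\alpha-r)/k$ (respectively $B=(\beta-r)/k$) is an integer, which for given $\alpha$ (respectively $\beta$) happens for exactly one residue $r$, namely $r\equiv\alpha$ (respectively $r\equiv\beta$) mod $k$, and only when $\alpha$ (respectively $\beta$) is itself an integer; tracking the factor $\chi(r)$ through the sum over $r$ then reproduces precisely the terms $\frac12\chi(\alpha)f(\alpha)$ and $\frac12\chi(\beta)f(\beta)$. An alternative that avoids the residue split is to prove the case $l=0$ directly --- it is equivalent to the distributional identity $\overline{B}_{1,\overline{\chi}}'(u)=-\chi(-1)\sum_{m\in\mathbb{Z}}\chi(m)\delta(u-m)$, which is immediate from (\ref{3}), the vanishing $\sum_{r=0}^{k-1}\chi(r)=0$, and $\overline{B}_1'(t)=1-\sum_{n}\delta(t-n)$ --- and then to induct on $l$ by integration by parts using $\overline{B}_{l+1,\overline{\chi}}'=(l+1)\overline{B}_{l,\overline{\chi}}$ (again a consequence of (\ref{3}) and $\overline{B}_{l+1}'=(l+1)\overline{B}_{l}$). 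In either route the obstacle is bookkeeping --- the powers of $k$ and the endpoint weights --- rather than any conceptual difficulty.
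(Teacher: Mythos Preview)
The paper does not actually prove this theorem: it is quoted verbatim from Berndt \cite[Theorem~4.1]{b5} and used as a tool in Section~6, with no argument supplied. So there is no ``paper's proof'' to compare your attempt against.

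That said, your reconstruction is correct and is essentially the natural proof. Splitting the left side into residue classes $n\equiv r\pmod k$, applying the classical Euler--Maclaurin formula to each $g_r(t)=f(tk+r)$ on $[(\alpha-r)/k,(\beta-r)/k]$, and then recombining via the identity $\sum_{r}\chi(r)\overline{B}_{i}\bigl((x-r)/k\bigr)=\chi(-1)k^{1-i}\overline{B}_{i,\overline{\chi}}(x)$ (which is exactly (\ref{3}) with $\chi\mapsto\overline{\chi}$ after the reindexing $r\mapsto -r$) is the standard route; the powers of $k$ from $g_r^{(j)}=k^jf^{(j)}$, from $dt=du/k$, and from (\ref{3}) do cancel as you say, and the vanishing of the leading integral uses only $\sum_r\chi(r)=0$. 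Your handling of the dashed endpoints is also right: for a given endpoint $\alpha$ (or $\beta$) the half-weight in the inner classical formula is triggered for exactly one residue $r\equiv\alpha\pmod k$, and only when $\alpha\in\mathbb{Z}$, which after multiplying by $\chi(r)$ reproduces $\tfrac12\chi(\alpha)f(\alpha)$. The alternative inductive argument you sketch (base case $l=0$ via the jump structure of $\overline{B}_{1,\overline{\chi}}$, then integrate by parts using $\overline{B}_{l+1,\overline{\chi}}'=(l+1)\overline{B}_{l,\overline{\chi}}$) is equally valid and is in fact closer in spirit to how Berndt organizes the proof in \cite{b5}.
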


Let $f\left(  x\right)  =\overline{B}_{p,\chi}\left(  xy\right)  ,$ $y\in R,$
and $c$ be a positive integer. The property
\[
\frac{d}{dx}\overline{B}_{m,\chi}\left(  x\right)  =m\overline{B}_{m-1,\chi
}\left(  x\right)  ,\text{ }m\geq2
\]
(\cite[Corollary 3.3]{b5}) entails that
\[
\frac{d^{j}}{dx^{j}}f\left(  x\right)  =\frac{d^{j}}{dx^{j}}\overline
{B}_{p,\chi}\left(  xy\right)  =y^{j}\frac{p!}{\left(  p-j\right)  !}%
\overline{B}_{p-j,\chi}\left(  xy\right)
\]
for $0\leq j\leq p-1$ and $f\in C^{\left(  p-1\right)  }\left[  \alpha
,\beta\right]  .$

We consider the following three cases;

\qquad I) $y=b/c,$ $\alpha=0$ and $\beta=ck,$

\qquad II) $y=b/\left(  2c\right)  ,$ $\alpha=0$ and $\beta=ck,$

\qquad III) $y=2b/c,$ $\alpha=0$ and $\beta=ck/2,$

\noindent where $c>0,$ separately.

For $\alpha=0,$ $\beta=ck$ and $1\leq l+1\leq p-1,$ Theorem \ref{E-M} can be
written as%
\begin{align}
\sum_{n=1}^{ck}\chi\left(  n\right)  \overline{B}_{p,\chi}\left(  ny\right)
&  =\frac{\chi\left(  -1\right)  }{p+1}\sum\limits_{j=0}^{l}\binom{p+1}%
{j+1}\left(  -1\right)  ^{j+1}y^{j}\left\{  \overline{B}_{p-j,\chi}\left(
cky\right)  -B_{p-j,\chi}\right\}  B_{j+1,\overline{\chi}}\nonumber\\
&  \quad-\chi\left(  -1\right)  \left(  -y\right)  ^{l+1}\binom{p}{l+1}%
\int\limits_{0}^{ck}\overline{B}_{l+1,\overline{\chi}}\left(  u\right)
\overline{B}_{p-l-1,\chi}\left(  yu\right)  du. \label{23}%
\end{align}

I) Let $y=b/c,$ $\alpha=0$ and $\beta=ck.$ Then (\ref{23}) becomes
\begin{equation}
\sum_{n=1}^{ck}\chi\left(  n\right)  \overline{B}_{p,\chi}\left(  \frac{b}%
{c}n\right)  =-\chi\left(  -1\right)  \binom{p}{l+1}\left(  -\frac{b}%
{c}\right)  ^{l+1}\int\limits_{0}^{ck}\overline{B}_{l+1,\overline{\chi}%
}\left(  u\right)  \overline{B}_{p-l-1,\chi}\left(  \frac{b}{c}u\right)  du.
\label{24}%
\end{equation}

$\bullet$ If $b=c,$ combining (\ref{24}) and Lemma \ref{lek2} with $d=c=1$ we
deduce that%

\[
\int\limits_{0}^{k}\overline{B}_{r,\overline{\chi}}\left(  u\right)
\overline{B}_{m,\chi}\left(  u\right)  du=\left(  -1\right)  ^{r-1}\frac
{r!m!}{\left(  m+r\right)  !}\left(  k^{m+r}-1\right)  B_{m+r}%
\]
for $k$ as in Lemma \ref{lek2} (where $l+1=r,$ $p-r=m$).

\begin{remark}
The equation above stated as
\[
\int\limits_{0}^{k}\overline{B}_{r,\overline{\chi}}\left(  u\right)
\overline{B}_{m,\chi}\left(  u\right)  du=\left(  -1\right)  ^{m-1}\frac
{m!r!}{\left(  m+r\right)  !}k^{m+r}B_{m+r},\text{ }m,r\geq1
\]
in \cite[Proposition 6.6 ]{b5}.
\end{remark}

$\bullet$ If $b=ck,$ it follows from the fact $\sum_{n=0}^{ck}\chi\left(
n\right)  =0$ and (\ref{24}) that
\begin{equation}
\int\limits_{0}^{k}\overline{B}_{l+1,\overline{\chi}}\left(  u\right)
\overline{B}_{p-l-1,\chi}\left(  ku\right)  du=0.\nonumber
\end{equation}

$\bullet$ Now assume that $\left(  b,c\right)  =1$.\ Then, it follows from
Lemma \ref{lek2} and (\ref{24}) that
\[
\int\limits_{0}^{k}\overline{B}_{l+1,\overline{\chi}}\left(  cu\right)
\overline{B}_{p-l-1,\chi}\left(  bu\right)  du=\frac{\left(  -1\right)  ^{l}%
}{\binom{p}{l+1}}\frac{c^{l+1-p}}{b^{l+1}}\chi\left(  c\right)  \overline
{\chi}\left(  b\right)  \left(  k^{p}-1\right)  B_{p}%
\]
for $k$ as in Lemma \ref{lek2}.

$\bullet$ Let $\left(  b,c\right)  =q$ and put $c=qc_{1},$\ $b=qb_{1}.$\ From
the equation above, we have
\begin{align*}
\int\limits_{0}^{k}\overline{B}_{l+1,\overline{\chi}}\left(  cu\right)
\overline{B}_{p-l-1,\chi}\left(  bu\right)  du  &  =\int\limits_{0}%
^{k}\overline{B}_{l+1,\overline{\chi}}\left(  c_{1}u\right)  \overline
{B}_{p-l-1,\chi}\left(  b_{1}u\right)  du\\
&  =q^{p}\frac{\left(  -1\right)  ^{l}}{\binom{p}{l+1}}\frac{c^{l+1-p}%
}{b^{l+1}}\chi\left(  \frac{c}{q}\right)  \overline{\chi}\left(  \frac{b}%
{q}\right)  \left(  k^{p}-1\right)  B_{p}%
\end{align*}
for $k$ as in Lemma \ref{lek2} with replacing $c$ by $c_{1}$\ and $b$ by
$b_{1}.$

II) Let $k$ and $b$\ be odd and consider $y=b/2c$ with $\left(  b,c\right)
=1$. From (\ref{68}), for odd $b$ we have%
\begin{equation}
\overline{B}_{p-j,\chi}\left(  \frac{bk}{2}\right)  =\overline{B}_{p-j,\chi
}\left(  \frac{k}{2}\right)  =\left\{  2^{j+1-p}\chi\left(  2\right)
-1\right\}  B_{p-j,\chi}. \label{25}%
\end{equation}
Therefore, (\ref{23}) becomes%
\begin{align}
s_{4,p}\left(  b,c:\chi\right)   &  =\sum_{n=1}^{ck}\chi\left(  n\right)
\overline{B}_{p,\chi}\left(  \frac{b}{2c}n\right) \nonumber\\
&  =-2^{1-p}\frac{\chi\left(  -1\right)  }{p+1}\sum\limits_{m=p-l}^{p}%
\binom{p+1}{m}\left(  -\frac{b}{c}\right)  ^{p-m}\left\{  \chi\left(
2\right)  -2^{m}\right\}  B_{m,\chi}B_{p+1-m,\overline{\chi}}\nonumber\\
&  -\chi\left(  -1\right)  c\binom{p}{l+1}\left(  -\frac{b}{2c}\right)
^{l+1}\int\limits_{0}^{k}\overline{B}_{l+1,\overline{\chi}}\left(  cu\right)
\overline{B}_{p-l-1,\chi}\left(  \frac{b}{2}u\right)  du \label{26}%
\end{align}
by setting $j=p-m$.

For $l=0$\ we have the following integral representation:
\begin{equation}
\chi\left(  -1\right)  s_{4,p}\left(  b,c:\chi\right)  =\frac{pb}{2}%
\int\limits_{0}^{k}\overline{B}_{1,\overline{\chi}}\left(  cu\right)
\overline{B}_{p-1,\chi}\left(  \frac{b}{2}u\right)  du-\left\{  2^{1-p}%
\chi\left(  2\right)  -2\right\}  B_{p,\chi}B_{1,\overline{\chi}}. \label{26a}%
\end{equation}

$\bullet$ If $p$ is even, it is seen from (\ref{26}) and Proposition \ref{pr1}
that
\begin{align*}
&  c\binom{p}{l+1}\left(  -\frac{b}{2c}\right)  ^{l+1}\int\limits_{0}%
^{k}\overline{B}_{l+1,\overline{\chi}}\left(  cu\right)  \overline
{B}_{p-l-1,\chi}\left(  \frac{b}{2}u\right)  du\\
&  \ =-2^{-p}c^{1-p}\chi\left(  2c\right)  \overline{\chi}\left(  b\right)
\left(  k^{p}-1\right)  B_{p}-\frac{2^{1-p}}{p+1}\sum\limits_{m=p-l}^{p}%
\binom{p+1}{m}\left(  -\frac{b}{c}\right)  ^{p-m}\left\{  \chi\left(
2\right)  -2^{m}\right\}  B_{m,\chi}B_{p+1-m,\overline{\chi}}%
\end{align*}
for $k$ as in Lemma \ref{lek2}.

$\bullet$ Let $p$ be odd and put $l+1=p-1$ in (\ref{26}). Then,%
\begin{align}
&  \chi\left(  -1\right)  \left(  p+1\right)  b\left(  2c\right)  ^{p}%
s_{4,p}\left(  b,c:\chi\right) \nonumber\\
&  \ =2\sum\limits_{m=2}^{p}\binom{p+1}{m}\left(  -1\right)  ^{m}%
b^{p+1-m}c^{m}\left\{  \chi\left(  2\right)  -2^{m}\right\}  B_{m,\chi
}B_{p+1-m,\overline{\chi}}\nonumber\\
&  \qquad-2cb^{p}p\left(  p+1\right)  c\int\limits_{0}^{k}\overline
{B}_{p-1,\overline{\chi}}\left(  cu\right)  \overline{B}_{1,\chi}\left(
\frac{b}{2}u\right)  du. \label{29}%
\end{align}

III) Let $k$ and $c$ be odd and consider $y=2b/c,$ $\alpha=0,$ $\beta=ck/2$
with $\left(  b,c\right)  =1$. Then, from Theorem \ref{E-M} and Eq. (\ref{25})
we have
\begin{align}
&  \sum_{0\leq n\leq ck/2}\chi\left(  n\right)  \overline{B}_{p,\chi}\left(
\frac{2b}{c}n\right)  =\sum_{n=1}^{\left(  ck-1\right)  /2}\chi\left(
n\right)  \overline{B}_{p,\chi}\left(  \frac{2b}{c}n\right) \nonumber\\
&  \ =-\frac{\chi\left(  -1\right)  }{p+1}\sum\limits_{j=1}^{l+1}\binom
{p+1}{j}\left(  -\frac{b}{c}\right)  ^{j-1}\left\{  \overline{\chi}\left(
2\right)  -2^{j}\right\}  B_{j,\overline{\chi}}B_{p+1-j,\chi}\nonumber\\
&  \quad-\chi\left(  -1\right)  \left(  -\frac{2b}{c}\right)  ^{l+1}\binom
{p}{l+1}\frac{c}{2}\int\limits_{0}^{k}\overline{B}_{l+1,\overline{\chi}%
}\left(  \frac{c}{2}u\right)  \overline{B}_{p-l-1,\chi}\left(  bu\right)
du.\text{ \ \ } \label{27}%
\end{align}
Now consider the sum $s_{3,p}\left(  b,c:\chi\right)  .$%
\begin{align}
s_{3,p}\left(  b,c:\chi\right)   &  =\sum\limits_{n=1}^{ck}\left(  -1\right)
^{n}\chi\left(  n\right)  \overline{B}_{p,\chi}\left(  \frac{bn}{c}\right)
\nonumber\\
&  =2\chi\left(  2\right)  \sum\limits_{n=1}^{\frac{ck-1}{2}}\chi\left(
n\right)  \overline{B}_{p,\chi}\left(  \frac{2bn}{c}\right)  -\sum
\limits_{n=1}^{ck}\chi\left(  n\right)  \overline{B}_{p,\chi}\left(  \frac
{bn}{c}\right)  . \label{28}%
\end{align}

$\bullet$ If $p$ is even, then from (\ref{27}), Proposition \ref{pr1} and
Lemma \ref{lek2}
\begin{align}
&  c\left(  -\frac{2b}{c}\right)  ^{l+1}\binom{p}{l+1}\int\limits_{0}%
^{k}\overline{B}_{l+1,\overline{\chi}}\left(  \frac{c}{2}u\right)
\overline{B}_{p-l-1,\chi}\left(  bu\right)  du\nonumber\\
&  \ =-c^{1-p}\chi\left(  c\right)  \overline{\chi}\left(  2b\right)  \left(
k^{p}-1\right)  B_{p}-\frac{2}{p+1}\sum\limits_{j=1}^{l+1}\binom{p+1}%
{j}\left(  -\frac{b}{c}\right)  ^{j-1}\left\{  \overline{\chi}\left(
2\right)  -2^{j}\right\}  B_{j,\overline{\chi}}B_{p+1-j,\chi}\nonumber
\end{align}
for $k$ as in Lemma \ref{lek2}.

$\bullet$ Let $p$ be odd. Put $l+1=p-1$ in (\ref{27}). Then, (\ref{27}),
(\ref{28}) and Lemma \ref{lek2} yield
\begin{align}
\overline{\chi}\left(  -2\right)  \left(  p+1\right)  bc^{p}s_{3,p}\left(
b,c:\chi\right)   &  =2\sum\limits_{j=1}^{p-1}\binom{p+1}{j}\left(  -1\right)
^{j}b^{j}c^{p+1-j}\left\{  \overline{\chi}\left(  2\right)  -2^{j}\right\}
B_{j,\overline{\chi}}B_{p+1-j,\chi}\nonumber\\
&  \quad-2^{p-1}b^{p}c^{2}p\left(  p+1\right)  \int\limits_{0}^{k}\overline
{B}_{p-1,\overline{\chi}}\left(  \frac{c}{2}u\right)  \overline{B}_{1,\chi
}\left(  bu\right)  du. \label{30}%
\end{align}

Putting $l=0$\ in (\ref{27}) gives an integral representation for
$_{3,p}\left(  b,c:\chi\right)  $:\
\begin{equation}
\frac{1}{2}\overline{\chi}\left(  -2\right)  s_{3,p}\left(  b,c:\chi\right)
=bp\int\limits_{0}^{k}\overline{B}_{1,\overline{\chi}}\left(  \frac{c}%
{2}u\right)  \overline{B}_{p-1,\chi}\left(  bu\right)  du-\left\{
\overline{\chi}\left(  2\right)  -2\right\}  B_{1,\overline{\chi}}B_{p,\chi}.
\label{33}%
\end{equation}

Combining (\ref{30}) and (\ref{26a}) (or (\ref{29}) and (\ref{33})) we arrive
at the reciprocity formula
\begin{align*}
&  \chi\left(  -1\right)  \left(  p+1\right)  \left\{  c\left(  2b\right)
^{p}s_{4,p}\left(  c,b:\overline{\chi}\right)  +\overline{\chi}\left(
2\right)  bc^{p}s_{3,p}\left(  b,c:\chi\right)  \right\} \\
&  \ =2\sum\limits_{j=1}^{p}\binom{p+1}{j}\left(  -1\right)  ^{j}%
b^{j}c^{p+1-j}\left\{  \overline{\chi}\left(  2\right)  -2^{j}\right\}
B_{j,\overline{\chi}}B_{p+1-j,\chi}%
\end{align*}
given by Corollary \ref{ck2} without the restriction $c$\ or $b\equiv
0(mod$\ $k)$.

\end{document}